\newtheorem{thm}{Theorem}[section]
\newtheorem*{thm*}{Theorem}
\newtheorem{prop}[thm]{Proposition}
\newtheorem{lem}[thm]{Lemma}
\newtheorem{cor}[thm]{Corollary}
\newtheorem{defn}{Definition}
\newtheorem{rem}[thm]{Remark}
\newtheorem{ex}[thm]{Example}
\DeclareMathOperator\Le{L}
\DeclareMathOperator\Gr{Gr}
\DeclareMathOperator\Tr{Tr}
\DeclareMathOperator\spa{span}
\DeclareMathOperator\spec{spec}
\DeclareMathOperator\A{\mathcal A}
\DeclareMathOperator\M{\mathcal M}
\DeclareMathOperator\Nv{\mathcal N}
\DeclareMathOperator\Lc{\mathcal L}
\DeclareMathOperator\Hi{\mathcal H}
\DeclareMathOperator\B{\mathcal B}
\DeclareMathOperator\K{\mathcal K}
\DeclareMathOperator\oo{{\mathcal O}}
\DeclareMathOperator\C{\mathbb{C}}
\DeclareMathOperator\R{\mathbb{R}}
\DeclareMathOperator\N{\mathbb{N}}
\DeclareMathOperator\Z{\mathbb{Z}}
\DeclareMathOperator\SOT{SOT}
\DeclareMathOperator\CAT{CAT}
\DeclareMathOperator\diag{diag}
\DeclareMathOperator\conv{conv}
\DeclareMathOperator\radius{radius}
\DeclareMathOperator\id{\mathbf{1}}
\title[Geometry of infinite dimensional unitary groups]{Geometry of infinite dimensional unitary groups: convexity and fixed points}
\date{\today}
\author{Martin Miglioli}
\email[Martin Miglioli]{martin.miglioli@gmail.com}
\address[Martin Miglioli]{Instituto Argentino de Matem\'atica-CONICET. Saavedra 15, Piso 3, (1083) Buenos Aires, Argentina}
\thanks{The author was supported by IAM-CONICET, grants PIP 2010-0757 (CONICET) and PICT 2010-2478 (ANPCyT)}
\begin{document}
\begin{abstract}
In this article we study convexity properties of distance functions in infinite dimensional Finsler unitary groups, such as the full unitary group, the unitary Schatten perturbations of the identity and unitary groups of finite von Neumann algebras. The Finsler structures are defined by translation of different norms on the tangent space at the identity. We first prove a convexity result for the metric derived from the operator norm on the full unitary group. We also prove strong convexity results for the squared metrics in Hilbert-Schmidt unitary groups and unitary groups of finite von Neumann algebras. In both cases the tangent spaces are endowed with an inner product defined with a trace. These results are applied to fixed point properties and to quantitative metric bounds in certain rigidity problems. Radius bounds for all convexity and fixed point results are shown to be optimal. \\

\medskip

\noindent \textbf{Keywords.} unitary groups, classical Banach-Lie group, p-Schatten class, finite von Neumann algebra, p-norm, Finsler metric, path metric space, short geodesic, geodesic convexity, circumcenter, rigidity   
\end{abstract}

\maketitle
\tableofcontents




\section{Introduction}
In this article we study convexity properties of distance functions in Finsler unitary groups, where the Finsler structures are defined by translation of different norms on the tangent space at the identity. As in the finite dimensional setting, an infinite dimensional manifold with a Finsler metric (a continuous distribution of norms in the tangent spaces) becomes a metric space. The distance between two points is given by the infimum of the lengths of the smooth curves which join these points. In \cites{pschatten,fredholm,finitemeasure} metric properties of unitary groups of Schatten perturbations of the identity and finite von Neumann algebras were studied. We generalize some results of these articles and we apply them to fixed point properties.   

Let $U$ stand for the unitary group of an infinite dimensional separable Hilbert space. We prove the following convexity result. Denote by $d_\infty$ the distance induced by the Finsler metric given by the operator norm. If $u,v,w\in U$ satisfy $d_\infty(u,w),d_\infty(v,w)\leq \pi/2$ and $d_\infty(u,v)<\pi$, then for the geodesic $\gamma$ joining $u$ and $v$ in $U$ the map $f(s)=d_\infty(\gamma(s),w)$ is convex for $s\in [0,1]$. In particular the convexity radius of the geodesic balls in $U$ is $\pi/2$. In order to obtain this result, we first prove that the maximum radius of convexity of the $d_\infty$ balls in the unitary compact perturbation of the identity is $\pi/2$. This is done with a continuous induction argument using results from \cite{fredholm}. Then approximations in the strong operator topology are used to prove the convexity of the $d_\infty$ distance in the full unitary group, and the $\pi/2$ bound for convexity is shown to be optimal. These results are important to get sharper version of previous results and several corollaries. A corollary is the following: let $u$ be a unitary operator and let $e^{tx}$ be a uniformly continuous one-parameter group of unitaries and consider 
$$\spec(-i\log(ue^{tx})).$$
If in an interval for the parameter $t$ the difference between the maximum and the minimum of this spectrum is less than $\pi$ then the maximum of the spectrum is convex and the minimum of the spectrum is concave in $t$. An analogous statement in the linear case of self-adjoint operators is immediate from the convexity of the norm: for self-adjoint operators $x$ and $y$ the maximum and minimum of the spectrum of $x+ty$ are respectively convex and concave in $t$. In the non linear case of positive invertible operators an analogous statement was proved in \cite{corach}.  

We also address convexity properties of squared distance functions. When the groups consist of unitaries of finite von Neumann algebras and of Hilbert-Schmidt perturbations of the identity the tangent spaces are endowed with inner products defined by the trace. The Hilbert-Schmidt unitary group is an infinite dimensional Riemannian manifold. The unitary group a finite von Neumann algebra has incomplete tangent spaces, but a metric $d_2$ can be defined in both cases.  We establish strong convexity results for the functions $d_2(w,\cdot)^2$ in closed balls $B_\infty[w,r]$ defined with the $d_\infty$ distance. More specifically, if for a unit speed geodesic $\beta$ and an $r<\pi/2$ we have $d_\infty(w,\beta(t))\leq r$, then the function $f(t)=d_2(w,\beta(t))^2$ satisfies 
$$f''\geq \sin(2r)/r.$$
In the case of the Hilbert-Schmidt groups the ratio $d_\infty/d_2$ can be arbitrarily small, so restricting the domain for strong convexity to $d_\infty$-balls is much less restrictive. In the case of finite von Neumann algebras the ratio $d_\infty/d_2$ can be arbitrarily large, so there is no convexity of $d_2(w,\cdot)^2$ in $d_2$-balls.

These strong convexity results for $d_2$ metrics are applied to the existence of analogues of circumcenters. In simpler contexts the existence of circumcenters follows from the strong convexity properties of the function
$$f_A(v)= \sup_{a\in A}d_2(a,v)^2$$
which imply that a unique minimizer exists, which is the circumcenter of the set $A$. In the context of our article we first define a set 
$$C=\bigcap_{a\in A}B_\infty [a,r]$$ 
of approximate circumcenters for an $r<\pi/2$, where $B_\infty[a,r]$ are closed balls with respect to the $d_\infty$ metric. Then in the set $C$ we find a minimizer of the function $f_A$. Hence the circumradius condition for the existence of of minimizers of $f_A$ is in term of the $d_\infty$ metric, which is less restrictive as was remarked above. From these results fixed point properties for group actions follow. The $d_\infty$-bounds on the circumradius of orbits for the existence of fixed points are shown to be optimal. Examples of fixed points are equivalences of representations and invariant subspaces of representations. We also give several examples of geodesically convex subspaces, since the aforementioned results hold in these subspaces.

The contents of the paper are as follows. In Section \ref{prel} we review several results on the metric geometry of unitary groups endowed with Finsler structures derived from different operator norms. These are the full unitary group, groups of unitary Schatten perturbations of the identity, and unitary groups of finite von Neumann algebras. In Section \ref{sprop} we first establish convexity properties for the geodesic distance in the full unitary group. In Section \ref{slambdaconv} we obtain strong convexity properties of the squared $d_2$ distance in the context of Hilbert-Schmidt perturbations of the identity and finite von Neumann algebras. In Section \ref{sclosedgeo} we give several examples of closed geodesic spaces, since the results of the last section are stated in terms of these spaces. Finally, in Section \ref{scirc} we establish the existence of an analogue of circumcenter for subsets of unitary groups with $d_\infty$ circumradius less than $\pi/2$. We apply the existence of circumcenters to prove fixed point theorems with optimal $d_\infty$-bounds on the diameter of orbits. The fixed point theorems are applied to metrical bounds in certain rigidity results for representations.  

The results in this article are generalizations to the infinite dimensional context of results in the unpublished manuscript \cite{miglioli} by the same author. 

\section{Preliminaries}\label{prel}

In this section we recall several results about the metric geometry of spaces of unitaries endowed with a bi-invariant Finsler metric derived from norms on the Lie algebra. Throughout this article, we will  use the metric and the geodesic structure of these spaces, which were studied in \cites{pschatten,fredholm,finitemeasure}. 

\subsection{Full unitary group}

Let $\Hi$ be a separable Hilbert space and let  
$$U=\{u\in \B(\Hi):u^*u=uu^*=\id\}$$
be the group of unitaries. We denote with
$$\B(\Hi)_{ah}=\{x\in \B(\Hi):x^*=-x\}$$
the real Banach space of skew-adjoint operators, which is the tangent space at the identity $\id$ of $U$. On $\B(\Hi)_{ah}$ the operator norm of $x$ is
$$\|x\|_\infty=\sup_{\xi\in \Hi}\frac{\|x\xi\|}{\|\xi\|}.$$
This norm is invariant by conjugation by unitaries, so by right or left translation we can define a norm on the tangent spaces at all points of $U$. We this Finsler structure we can define a metric on $U$. Let $\Le_{\infty}$ denote the length functional for piecewise smooth curves $\alpha$ in $U$ measured with the $\|\cdot\|_\infty$ norm
$$\Le_\infty(\alpha)=\int_{t_0}^{t_1}\|\dot{\alpha}(t)\|_\infty dt.$$
We define $d_\infty(u,v)= \inf\{\Le_\infty(\gamma):\gamma \subseteq U \mbox{ joins } u \mbox{ and }v\}$ as the rectifiable distance between $u$ and $v$ in $U$. This metric is invariant by left and right translations. We next recall Proposition 5.2. of \cite{esteban}. 

\begin{prop}\label{minimalf}
Let $u\in U$ and $x\in\B(\Hi)_{ah}$ with $\|x\|_\infty\leq\pi$. Then the smooth curve $\mu(t)= ue^{tx}$ has minimal length along its path, for all $t\in[-1,1]$. Any pair of unitaries $u,v\in U$ can be joined by such a curve.
\end{prop}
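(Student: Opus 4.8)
The plan is to establish the two assertions — minimality of $\mu(t)=ue^{tx}$ for $\|x\|_\infty\le\pi$ and the fact that any two unitaries are joined by such a curve — by reducing to the spectral picture of the single skew-adjoint operator $x$.

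First I would prove the \emph{joining} statement. Given $u,v\in U$, the unitary $u^*v$ can be written as $e^{x}$ with $x\in\B(\Hi)_{ah}$ and $\|x\|_\infty\le\pi$: by the spectral theorem $u^*v$ has spectrum on the unit circle, and one chooses the branch of the logarithm with imaginary part in $[-\pi,\pi]$, so $x=\log(u^*v)$ is skew-adjoint with $\|x\|_\infty\le\pi$; then $\mu(1)=ue^{x}=v$. (If $-1\in\spec(u^*v)$ the choice of $\log$ on that part of the spectrum is not unique, but any measurable selection with values in $\{\pm i\pi\}$ works.)

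For the \emph{minimality} statement, the key point is a lower bound for $d_\infty$. The standard approach is: for any piecewise smooth curve $\alpha$ in $U$ from $\id$ to $w=e^{x}$, one shows $\Le_\infty(\alpha)\ge \|x\|_\infty$ whenever $\|x\|_\infty\le\pi$, while $\Le_\infty(\mu|_{[0,1]})=\int_0^1\|x\|_\infty\,dt=\|x\|_\infty$, so $\mu$ realizes the distance. The lower bound itself I would obtain by testing against a fixed spectral direction: pick a unit vector $\xi$ (or, more carefully, a spectral projection of $x$) on which $x$ nearly attains its norm, and track the ``rotation angle'' $\langle \alpha(t)\xi,\xi_0\rangle$-type quantity, using $|\frac{d}{dt}\arg\langle\cdots\rangle|\le\|\dot\alpha(t)\|_\infty$; the total variation of this angle is at least the rotation produced by $e^{x}$ on that spectral piece, namely (a value approaching) $\|x\|_\infty$, provided the relevant eigenvalue angle does not exceed $\pi$ so that no ``shortcut around the circle'' is available. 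This is precisely where the hypothesis $\|x\|_\infty\le\pi$ enters, and it is also the step I expect to be the main obstacle: making the one-vector or one-spectral-projection argument rigorous in the infinite dimensional case (where $x$ need not have eigenvalues and $\|x\|_\infty$ may only be approached) requires an approximation by spectral projections onto $\{|{\arg}|\ge\|x\|_\infty-\varepsilon\}$ together with a careful estimate of the length of a curve in $U$ measured against the norm on a fixed one-dimensional (or finite-dimensional) subspace. Since the statement is quoted from Proposition 5.2 of \cite{esteban}, I would in fact cite that reference for the detailed estimate and only indicate here how minimality and the joining property follow from the spectral decomposition of $x$.

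Finally, combining the two parts: $\mu(t)=ue^{tx}$ on $[-1,1]$ has, for each $t$, derivative $u e^{tx}x$ of constant norm $\|x\|_\infty\le\pi$, and restricted to any subinterval it is again of the form (point)$\cdot e^{sx'}$ with $\|x'\|_\infty=|s|\,\|x\|_\infty\le\pi$; hence by the minimality bound just established it is length-minimizing along its whole path, which is the claim.
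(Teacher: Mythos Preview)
The paper does not prove this proposition at all: it is introduced with the sentence ``We next recall Proposition 5.2 of \cite{esteban}'' and then simply stated, with no argument given. So there is nothing in the paper to compare your sketch against.

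Your outline is reasonable and you correctly identify both the source and the role of the hypothesis $\|x\|_\infty\le\pi$. The joining statement via the Borel functional calculus for $\log$ on the spectrum of $u^*v$ is exactly the standard argument. For the minimality lower bound, the idea of tracking the angular displacement of an approximate eigenvector (or a spectral projection near the top of $\spec(ix)$) is indeed how such proofs go in the literature; you are right that the main technical point is handling the absence of genuine eigenvectors in infinite dimensions via an $\varepsilon$-approximation on spectral projections. Since you explicitly propose to cite \cite{esteban} for the detailed estimate and only sketch the mechanism, your treatment is in fact \emph{more} than what the paper does, and is consistent with it.
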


We shall denote the closed metric balls with $B_\infty[w,r]=\{u\in U:d_\infty(u,w)\leq r\}$. 

\begin{rem}
The curves in this proposition are not the unique length minimizers. To see this consider an abelian subalgebra $\A$ and its unitary group $U_{\A}$. In this unitary group the geometry of $B_\infty[\id,\pi/2]$ is the same as $\{x\in \A_{ah}:\|x\|_\infty\leq \pi/2\}$ endowed with the metric defined by the uniform norm.
\end{rem}

\begin{prop}\label{basicof}
In $(U,d_\infty)$ the following holds
\begin{enumerate}
\item For $r\leq \pi$ we have $B_\infty[\id,r]=\{u\in U:\spec(u)\subseteq \exp(i[-r,r])\}$.

\item For $x\in \B(\Hi)_{ah}$ with $\|x\|_\infty\leq\pi$
$$\|\id-e^x\|_\infty=2\sin\left(\frac{\|x\|_\infty}{2}\right).$$
\item For $u,v\in U$ we have $\|u-v\|_\infty<\sqrt{2}$ if and only if $d_\infty(u,v)<\pi/2$, and $\|u-v\|_\infty<2$ if and only $d_\infty(u,v)<\pi$.
Also,
$$\frac{2}{\pi} d_\infty(u,v)\leq \|u-v\|_\infty\leq d_\infty(u,v),$$
in particular the metric space $(U,d_\infty)$ is complete.

\end{enumerate}
\end{prop}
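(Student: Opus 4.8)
The plan is to reduce every assertion to the scalar functional calculus, using as the only external input the length‑minimizing exponential curves of Proposition~\ref{minimalf}. The first step is the observation that for every $u\in U$ there is a self‑adjoint $a\in\B(\Hi)$ with $u=e^{ia}$ and $\|a\|_\infty=d_\infty(u,\id)\le\pi$: Proposition~\ref{minimalf} produces some $x\in\B(\Hi)_{ah}$ with $\|x\|_\infty\le\pi$, $e^{x}=u$, and $t\mapsto e^{tx}$ length‑minimizing on $[0,1]$, so $d_\infty(u,\id)$ equals the length $\|x\|_\infty$ of that curve, and one sets $a=-ix$. Granting this, part~(2) is a short computation: with $x=ia$, $\|a\|_\infty\le\pi$, we have $\id-e^{x}=g(a)$ for $g(t)=1-e^{it}$, and $|g(t)|=2|\sin(t/2)|$; since $g(a)$ is normal, $\|\id-e^{x}\|_\infty=\sup\{2|\sin(t/2)|:t\in\spec(a)\}$, and because $t\mapsto 2|\sin(t/2)|$ is nondecreasing in $|t|$ on $[0,\pi]$ while $\|a\|_\infty=\max\{|t|:t\in\spec(a)\}$, this supremum equals $2\sin(\|a\|_\infty/2)=2\sin(\|x\|_\infty/2)$.

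For part~(1): if $d_\infty(u,\id)\le r$, write $u=e^{ia}$ as above with $\|a\|_\infty\le r$, so $\spec(a)\subseteq[-r,r]$, and the spectral mapping theorem for the continuous function $t\mapsto e^{it}$ gives $\spec(u)\subseteq\exp(i[-r,r])$. Conversely, assume $\spec(u)\subseteq\exp(i[-r,r])$. If $r<\pi$ the map $t\mapsto e^{it}$ is a homeomorphism of $[-r,r]$ onto its image, hence its inverse is a continuous real‑valued function on $\spec(u)$; the continuous functional calculus then yields $a=a^{*}\in\B(\Hi)$ with $\spec(a)\subseteq[-r,r]$, so $\|a\|_\infty\le r$, and $e^{ia}=u$, and the curve $t\mapsto e^{t(ia)}$ joins $\id$ to $u$ with length $\le r$, giving $d_\infty(u,\id)\le r$. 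The remaining case $r=\pi$ is trivial, since then $\exp(i[-\pi,\pi])$ is the full unit circle and $B_\infty[\id,\pi]=U$ by the first step.

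For part~(3): bi‑invariance of both $d_\infty$ and $\|\cdot\|_\infty$ reduces to $v=\id$, and then the first step together with part~(2) gives the identity $\|u-v\|_\infty=2\sin\!\big(d_\infty(u,v)/2\big)$ for all $u,v\in U$, with $d_\infty(u,v)/2\in[0,\pi/2]$, where $\sin$ is increasing. The two equivalences then follow by comparing with $\sin(\pi/4)=\sqrt2/2$ and $\sin(\pi/2)=1$, and the two‑sided estimate is Jordan's inequality $\tfrac{2}{\pi}\theta\le\sin\theta\le\theta$ on $[0,\pi/2]$ (concavity of $\sin$ for the lower bound, $(\theta-\sin\theta)'=1-\cos\theta\ge0$ for the upper). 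Finally this estimate shows $d_\infty$ and the operator‑norm metric are bi‑Lipschitz equivalent on $U$; since $U$ is norm‑closed in $\B(\Hi)$ it is norm‑complete, hence $(U,d_\infty)$ is complete. I expect no genuine obstacle; the one point needing care is the branch‑of‑logarithm issue in part~(1), which is exactly why one separates $r<\pi$ (where $t\mapsto e^{it}$ is injective on $[-r,r]$ and the continuous functional calculus applies directly) from the trivial endpoint $r=\pi$.
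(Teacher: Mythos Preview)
Your proof is correct and follows essentially the same route as the paper: both reduce everything to the scalar functional calculus via Proposition~\ref{minimalf}, deriving item~(1) from $B_\infty[\id,r]=\exp(\{x\in\B(\Hi)_{ah}:\|x\|_\infty\le r\})$, item~(2) from the spectral identity $\spec(\id-e^x)=1-\exp(\spec(x))$, and item~(3) from item~(2) after translating to $v=\id$. Your treatment is in fact more explicit than the paper's at two points---the branch-of-logarithm case split $r<\pi$ versus $r=\pi$ in item~(1), and the appeal to Jordan's inequality plus norm-closedness of $U$ for the double estimate and completeness in item~(3)---but these are elaborations of the same argument rather than a different approach.
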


\begin{proof}
Proposition \ref{minimalf} implies $B_\infty[\id,r]=\exp(\{x\in\B(\Hi)_{ah}:\|x\|_\infty\leq r\}$, which is equivalent to the first item. The second item follows by computing distances from $o\in\C$ to the spectrum and noting that $\spec(\id-e^x)=1-\exp(\spec(x))$. Since $\|u-v\|_\infty=\|\id-u^{-1}v\|_\infty$ and $d_\infty(u,v)=d_\infty(\id,u^{-1}v)$, we set $u^{-1}v=e^x$ with $x\in\B(\Hi)_{ah}$ such that $\|x\|_\infty \leq \pi$. The third item is implied by the second item.
\end{proof}

\begin{defn}\label{defgeod}
For $u,v\in U$ such that $d_\infty(u,v)<\pi$ we define the geodesic joining $u$ and $v$ as $\gamma_{u,v}:[0,1]\to U$,
$$\gamma_{u,v}(t)=u\exp(t\log(u^{-1}v))$$
for $t\in [0,1]$, where $\log$ is the principal branch of the natural logarithm.
\end{defn}

\begin{prop}\label{contendf}
Let $r<\pi/2$ and let $w\in U$. Then for $t\in [0,1]$ the map 
$$(u,v)\mapsto \gamma_{u,v}(t)$$
is continuous on $(B_\infty[w,r],d_\infty)\times (B_\infty[w,r],d_\infty)$. 
\end{prop}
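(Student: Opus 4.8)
The plan is to reduce continuity of $(u,v)\mapsto \gamma_{u,v}(t) = u\exp(t\log(u^{-1}v))$ to the continuity of the individual building blocks: group multiplication, inversion, the principal logarithm, and the exponential. Group multiplication and inversion are isometries (or jointly continuous) for $d_\infty$ by bi-invariance, and since $d_\infty$ is equivalent to the operator-norm distance on bounded sets (Proposition \ref{basicof}(3) gives $\tfrac{2}{\pi}d_\infty(u,v)\le \|u-v\|_\infty \le d_\infty(u,v)$), it suffices to work with the operator norm throughout. The exponential map $\B(\Hi)_{ah}\to U$ is norm-continuous (indeed Lipschitz on the relevant ball, by the power series bound $\|e^x - e^y\|_\infty \le \|x-y\|_\infty$ for skew-adjoint $x,y$, or directly from Proposition \ref{basicof}(2)). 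So the only genuinely delicate point is the continuity of the principal branch $\log$ at the arguments $u^{-1}v$ that arise here.

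The key observation is a uniform spectral separation from the cut. If $u,v\in B_\infty[w,r]$ with $r<\pi/2$, then $d_\infty(u,v)\le d_\infty(u,w)+d_\infty(w,v) \le 2r < \pi$, so by Proposition \ref{basicof}(1) we have $\spec(u^{-1}v)\subseteq \exp(i[-2r,2r])$ with $2r<\pi$. Thus the spectrum of $u^{-1}v$ stays in a fixed compact arc of the unit circle bounded away from $-1$, on which the principal logarithm is defined by a holomorphic function $g$ on a fixed open neighborhood $\Omega\subset\C$ of that arc. Hence $\log(u^{-1}v) = g(u^{-1}v)$ can be computed by the holomorphic functional calculus with a fixed contour $\Gamma\subset\Omega$ surrounding $\exp(i[-2r,2r])$, and the map $a\mapsto g(a)$ is norm-continuous — in fact Lipschitz with constant depending only on $r$ — on the set of unitaries with spectrum inside $\exp(i[-2r,2r])$, by the standard resolvent estimate
$$\|g(a)-g(b)\|_\infty \le \frac{1}{2\pi}\int_\Gamma |g(\lambda)|\,\bigl\|(\lambda-a)^{-1}(a-b)(\lambda-b)^{-1}\bigr\|_\infty\,|d\lambda| \le C_r\,\|a-b\|_\infty,$$
since the resolvents are uniformly bounded on $\Gamma$ once $\Gamma$ is chosen at a fixed distance from the arc.

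Assembling the pieces: given $u,v,u',v'\in B_\infty[w,r]$, write $a=u^{-1}v$, $a'=u'^{-1}v'$; then $\|a-a'\|_\infty$ is controlled by $\|u-u'\|_\infty$ and $\|v-v'\|_\infty$ (multiplication and inversion on the unit ball of $\B(\Hi)$ being jointly Lipschitz), hence $\|\log a - \log a'\|_\infty \le C_r\|a-a'\|_\infty$, hence $\|t\log a - t\log a'\|_\infty \le C_r\|a-a'\|_\infty$, hence $\|\exp(t\log a)-\exp(t\log a')\|_\infty \le C_r\|a-a'\|_\infty$, and finally $\|u\exp(t\log a)-u'\exp(t\log a')\|_\infty$ is controlled by $\|u-u'\|_\infty$ together with the previous bound. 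Converting back via Proposition \ref{basicof}(3) yields continuity in $d_\infty$, uniformly in $t\in[0,1]$. The main obstacle is precisely the holomorphic-functional-calculus argument for $\log$: one must be careful that the contour $\Gamma$ and the resolvent bounds depend only on $r$ (not on the particular unitaries), which is exactly what the hypothesis $r<\pi/2$ guarantees through the triangle inequality $d_\infty(u,v)\le 2r<\pi$; this is what keeps the spectra off the branch cut at $-1$ and makes the estimates uniform.
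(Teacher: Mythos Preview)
Your proof is correct and follows essentially the same route as the paper: reduce to operator-norm continuity via Proposition \ref{basicof}(3), use the triangle inequality to confine $\spec(u^{-1}v)$ to the arc $\exp(i[-2r,2r])$ with $2r<\pi$, and then argue that the principal logarithm is norm-continuous on unitaries with spectrum in that arc. The only difference is the tool for this last step: the paper uses Stone--Weierstrass approximation of $\log$ by polynomials in $z,\bar z$ and the continuous functional calculus, whereas you use the holomorphic functional calculus with a fixed contour and resolvent estimates, which additionally yields an explicit Lipschitz constant depending only on $r$.
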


\begin{proof}
If $u_1,u_2\in B_\infty[w,r]$ then $w^{-1}u_1,w^{-1}u_2\in B_\infty[\id,r]$. By the triangle inequality we get 
$$u_1^{-1}u_2=(w^{-1}u_1)^{-1}(w^{-1}u_2)\in B_\infty[\id,2r]=\{u\in U:\spec(u)\subseteq \exp(i[-2r,2r])\}.$$
On $C=\{u\in U:\spec(u)\subseteq \exp(i[-2r,2r])\}$ the natural branch of the logarithm is defined and the function 
$$B_\infty[\id,2r]\to \B(\Hi)_{ah},\qquad u\mapsto \log(u)$$ 
is continuous in the $\|\cdot\|_\infty$-norm. This is proved by approximating $\log\vert_C$ with polynomials $p(z,\overline{z})$ and applying the continuous functional calculus. By Proposition \ref{basicof} convergence in $d_\infty$ is equivalent to convergence in the $\|\cdot\|_\infty$-norm, so the conclusion follows.  
\end{proof}

If $\A\subseteq \B(\Hi)$ is a $C^*$-algebra let $U_{\A}$ denote the unitary group of $\A$. If $u,v\in U_{\A}$ and $d_{\infty}(u,v)<\pi$ then $\gamma_{u,v}\subseteq U_{\A}$ by stability properties of the functional calculus. Hence the $d_{\infty}$ distance in $U_{\A}$ agrees with the $d_{\infty}$ distance of $U$.

\subsection{Unitary Schatten perturbation of the identity}

Denote the $p$-Schatten class by 
$$\B_p(\Hi)=\{x\in\B(\Hi): \Tr((x^*x)^{\frac{p}{2}})<\infty\},$$ 
where $\Tr$ is the usual trace in $\B(\Hi)$. We shall focus on the case when $p$ is an even integer. The spaces $\B_p(\Hi)$ are Banach spaces with the norms $\|x\|_p=\Tr((x^*x)^{\frac{p}{2}})^\frac{1}{p}.$
The set of  skew-hermitian operators in $\B_p(\Hi)$ is $\B_p(\Hi)_{ah}=\{x\in \B_p(\Hi): x^*=-x\}$. Consider the following classical Banach-Lie group of operators:
$$U_p=\{u\in U: u -\id\in\B_p(\Hi)\}.$$ 
These groups have differentiable structure when endowed with the metric $\|u-v\|_p$ and the Banach-Lie algebra of $U_p(\Hi)$ is the (real) Banach space $\B_p(\Hi)_{ah}$, see \cite{harpe}. Note that the exponential map is a bijection between the sets
$$ \{z\in \B_p(\Hi)_{ah}: \|z\|_\infty < \pi\}\to\{u\in U_p:\|\id - u\|_\infty < 2\}.$$
Moreover, $\exp: \{z\in \B_p(\Hi)_{ah}: \|z\|_\infty \leq \pi\}\to U_p$ is surjective. We introduce a Finsler metric on $U_p$ as follows. Let $\Le_p$ denote the length functional for piecewise smooth curves $\alpha$ in $U_p$ measured with the $\|\cdot\|_p$ norm:
$$\Le_p(\alpha)=\int_{t_0}^{t_1}\|\dot{\alpha}(t)\|_p dt,$$
and we define $d_p(u,v)= \inf\{\Le_p(\gamma):\gamma \subseteq U \mbox{ joins } u \mbox{ and }v\}$ as the rectifiable distance between $u$ and $v$ in $U_p$. This metric is invariant by left and right translations.

We recall Theorem 3.2 in \cite{pschatten} concerning the minimality of geodesics in $(U_p,d_p)$. This theorem could be derived from the general theory of Riemannian manifolds in the case $p = 2$. 

\begin{thm}\label{geodesicasp}
Let $p$ be an even integer and consider $(U_p,d_p)$ as defined above. The following facts hold:
\begin{enumerate}
 
\item Let $u\in U_p$ and $x\in \B_p(\Hi)_{ah}$ with $\|x\|_\infty\leq \pi$. Then the curve $\mu(t)=ue^{tx}$, $t\in [0,1]$, is shorter than any other piecewise smooth curve in $U_p$ joining the same endpoints. Moreover, if $\|x\|_\infty< \pi$ then $\mu$ is unique with this property.

\item Let $u,v\in U_p$. Then there exists a minimal geodesic curve joining them. If $\|u-v\|_\infty<2$ then this geodesic is unique. There are in $U_p$ minimal geodesics of arbitrary length, thus the diameter of $U_p$ is infinite.

\item If $u,v\in U_p$, then 
 $$\left(\sqrt{1-\frac{\pi^2}{12}}\right)d_p(u,v)\leq \|u-v\|_p\leq d_p(u, v).$$
In particular the metric space $(U,d_p)$ is complete.
\end{enumerate}
\end{thm}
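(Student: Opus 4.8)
The plan is to establish (1) first and then read off (2) and (3). By left-invariance of $d_p$ and of the $\|\cdot\|_p$-Finsler structure, I may assume $u=\id$, so the claim in (1) becomes: the one-parameter curve $\mu(t)=e^{tx}$, $t\in[0,1]$, with $\|x\|_\infty\le\pi$, is no longer than any piecewise smooth curve $\gamma\colon[0,1]\to U_p$ with $\gamma(0)=\id$ and $\gamma(1)=e^{x}$. Since $\dot\mu(t)=\mu(t)x$ and $\|\cdot\|_p$ is unitarily invariant, $\Le_p(\mu)=\int_0^1\|x\|_p\,dt=\|x\|_p$, so everything comes down to the lower bound $\Le_p(\gamma)\ge\|x\|_p$. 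To get it I would use that $x\in\B_p(\Hi)_{ah}$, being compact and skew-adjoint, is diagonalizable: pick an orthonormal basis $\{\xi_j\}$ of $\Hi$ with $x\xi_j=i\theta_j\xi_j$, $|\theta_j|\le\|x\|_\infty\le\pi$, and $\sum_j|\theta_j|^p=\|x\|_p^p$. For each $j$ the curve $s\mapsto\gamma(s)\xi_j$ lies on the unit sphere of $\Hi$ (viewed in the underlying real Hilbert space) and joins $\xi_j$ to $e^{i\theta_j}\xi_j$; since the geodesic distance between two unit vectors equals the arccosine of the real part of their inner product, and $|\theta_j|\le\pi$, the length of this spherical curve is at least $|\theta_j|$, i.e.
$$\int_0^1\|\dot\gamma(s)\xi_j\|\,ds\ge|\theta_j|.$$

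The next step is to bundle these one-dimensional estimates. Put $A(s)=\dot\gamma(s)^{*}\dot\gamma(s)\ge0$, so that $\langle A(s)\xi_j,\xi_j\rangle=\|\dot\gamma(s)\xi_j\|^{2}$ and $\Tr\big(A(s)^{p/2}\big)=\|\dot\gamma(s)\|_p^{p}$. Applying Minkowski's integral inequality in $\ell^{p}$, then the estimate above, and then Jensen's inequality for the convex function $t\mapsto t^{p/2}$ (this is the point at which $p\ge2$ is used, in the form $\sum_j\langle A\xi_j,\xi_j\rangle^{p/2}\le\sum_j\langle A^{p/2}\xi_j,\xi_j\rangle=\Tr(A^{p/2})$), I obtain
$$\|x\|_p\le\Big(\sum_j\Big(\int_0^1\|\dot\gamma(s)\xi_j\|\,ds\Big)^{p}\Big)^{1/p}\le\int_0^1\Big(\sum_j\langle A(s)\xi_j,\xi_j\rangle^{p/2}\Big)^{1/p}ds\le\int_0^1\Tr\big(A(s)^{p/2}\big)^{1/p}ds=\Le_p(\gamma),$$
which proves the minimality in (1). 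For uniqueness when $\|x\|_\infty<\pi$, I would trace the equality cases: equality in Minkowski's inequality forces $(\|\dot\gamma(s)\xi_j\|)_j$ to be proportional to a fixed $\ell^p$-vector for a.e.\ $s$, while equality in the spherical estimate, now with $|\theta_j|<\pi$ so that the short great-circle arc is the unique minimizer, forces $\gamma(s)\xi_j=e^{i\sigma(s)\theta_j}\xi_j$ for one common nondecreasing $\sigma$ with $\sigma(0)=0$ and $\sigma(1)=1$; hence $\gamma$ is a reparametrization of $\mu$.

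With (1) in hand, (2) follows quickly. For arbitrary $u,v\in U_p$ the operator $u^{-1}v-\id$ is compact, so $\spec(u^{-1}v)=\{e^{i\theta_j}\}$ is discrete, and choosing each $\theta_j\in(-\pi,\pi]$ produces $x\in\B_p(\Hi)_{ah}$ with $u^{-1}v=e^{x}$ and $\|x\|_\infty\le\pi$ (that $x$ is $p$-Schatten uses the elementary bound $|\theta|\le\tfrac{\pi}{2}|1-e^{i\theta}|$ for $|\theta|\le\pi$); then $ue^{tx}$ is minimal by (1), and if moreover $\|u-v\|_\infty<2$ then $\|x\|_\infty<\pi$ and the uniqueness part applies. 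As $\B_p(\Hi)_{ah}$ contains skew-adjoint operators of arbitrarily large $p$-norm with $\|\cdot\|_\infty\le\pi$, whose exponentials lie at $d_p$-distance $\|x\|_p$ from $\id$, the diameter of $U_p$ is infinite. For (3), the inequality $\|u-v\|_p\le d_p(u,v)$ is immediate from $\|u-v\|_p=\big\|\int_0^1\dot\gamma(t)\,dt\big\|_p\le\Le_p(\gamma)$ and infimizing over $\gamma$; for the reverse, (1) gives $d_p(u,v)=\|x\|_p$ and $\|u-v\|_p=\|\id-e^{x}\|_p$, and diagonalization reduces matters to the scalar estimate $|1-e^{i\theta}|^{2}=4\sin^{2}(\theta/2)\ge\theta^{2}-\theta^{4}/12\ge(1-\pi^{2}/12)\theta^{2}$ for $|\theta|\le\pi$ (a consequence of $\sin s\ge s-\tfrac{s^{3}}{6}$), raised to the power $p/2$ and summed over the $\theta_j$. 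Completeness of $(U_p,d_p)$ then follows because $\|u-v\|_p$ and $d_p(u,v)$ are equivalent, and $U_p$ is closed in $\id+\B_p(\Hi)$.

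The hard part is the lower bound in (1): the right way to turn the one-dimensional spherical length bounds into an estimate for the $p$-Finsler length, namely the combination of Minkowski's integral inequality with the trace/Jensen inequality $\sum_j\langle A\xi_j,\xi_j\rangle^{p/2}\le\Tr(A^{p/2})$. The uniqueness analysis is the other delicate point, since it requires simultaneously identifying the equality cases of all the inequalities used; note that for $p=2$ the Jensen step is an identity, so there uniqueness has to be extracted from the Minkowski and spherical equality conditions alone.
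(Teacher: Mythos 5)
This theorem is not proved in the paper at all: it is recalled verbatim as Theorem 3.2 of \cite{pschatten}, so there is no in-paper argument to compare yours against; the comparison is with that reference. Your proposal is, as far as I can check, a correct and essentially self-contained proof. The minimality argument extends the classical eigenvector trick (project the competing curve onto the unit sphere along an orthonormal eigenbasis of $x$, and use that spherical distance between $\xi$ and $e^{i\theta}\xi$ is $|\theta|$ for $|\theta|\le\pi$) from the operator-norm setting to the $p$-norms, and the way you bundle the scalar estimates --- Minkowski's integral inequality in $\ell^p$ followed by the spectral Jensen inequality $\langle A\xi,\xi\rangle^{p/2}\le\langle A^{p/2}\xi,\xi\rangle$ for $A=\dot\gamma^*\dot\gamma\ge 0$ and $p\ge 2$ --- is exactly the right mechanism to get $\Le_p(\gamma)\ge\|x\|_p$. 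Items (2) and (3) then follow the standard route (log of a unitary compact perturbation of the identity, with $|\theta|\le\tfrac{\pi}{2}|1-e^{i\theta}|$ to see $x\in\B_p(\Hi)_{ah}$, and the scalar bound $4\sin^2(\theta/2)\ge(1-\pi^2/12)\theta^2$ for the comparison of $\|u-v\|_p$ with $d_p$), and these are fine.

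Two points deserve a little more care in a written-up version. First, in the uniqueness analysis the conclusion ``one common $\sigma$'' does follow, but only after noting that equality in the whole chain forces $\int_0^1\|\dot\gamma(s)\xi_j\|\,ds=|\theta_j|$ for \emph{every} $j$ (including $\theta_j=0$, which gives $\gamma(s)\xi_j\equiv\xi_j$), and that the equality case of Minkowski's integral inequality for $p>1$ yields $\|\dot\gamma(s)\xi_j\|=c(s)g_j$ a.e.; normalizing by $\int_0^1\sigma_j'=1$ identifies $g_j$ with a multiple of $|\theta_j|$ and hence $\sigma_j=\sigma$ for all $j$ with $\theta_j\neq 0$. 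Second, ``unique'' should be read as unique up to monotone reparametrization (or among constant-speed curves), which is what your equality analysis actually delivers; stating this explicitly avoids a mismatch with the formulation in \cite{pschatten}.
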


\begin{thm}[{\cite[Theorem 3.6]{pschatten}}]\label{strictp}
Let $p$ be an even integer, $u\in U_p$, and let $\beta\subseteq B_p(u,\pi/2)$ be a non constant geodesic. Then the function 
$$f(t)=d_p(u,\beta(t))^p$$ 
is strictly convex.
\end{thm}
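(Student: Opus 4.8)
The plan is to reduce the statement to a spectral computation and then track the second derivative of the squared distance. Since $d_p$ is left-invariant, after replacing $\beta$ by $t\mapsto u^{-1}\beta(t)$ we may assume $u=\id$. For $v$ in the open ball $B_p(\id,\pi/2)$, Theorem~\ref{geodesicasp} provides a minimal geodesic $s\mapsto e^{sx}$ from $\id$ to $v$ with $\|x\|_\infty\le\pi$, and measuring its length with $\|\cdot\|_p$ gives $d_p(\id,v)=\|x\|_p$; since $d_p(\id,v)<\pi/2$ we get $\|x\|_\infty\le\|x\|_p<\pi/2$, so $x$ is the principal logarithm $\log v$. Thus $z(t):=\log\beta(t)\in\B_p(\Hi)_{ah}$ is a smooth curve with $\|z(t)\|_\infty<\pi/2$ throughout, and, writing $h(t):=-iz(t)$ for the associated self-adjoint curve (whose spectrum lies in $(-\pi/2,\pi/2)$),
\[
f(t)=d_p(\id,\beta(t))^p=\|z(t)\|_p^p=\Tr\bigl(\psi(h(t))\bigr),\qquad \psi(\theta)=|\theta|^p ,
\]
and $\psi$ is a polynomial since $p$ is even. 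So it suffices to prove $\bigl(\Tr\circ\,\psi\circ h\bigr)''>0$.

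To control $h'$ and $h''$, observe that by Definition~\ref{defgeod} a non-constant geodesic lying in the ball is, up to affine reparametrization, of the form $\beta(t)=\beta(0)e^{ty}$ with $0\ne y\in\B_p(\Hi)_{ah}$, so $\beta(t)^{-1}\beta'(t)=y$ is constant. Differentiating $\beta(t)=e^{ih(t)}$ and using the standard power series for the differential of $\exp$ gives
\[
y=e^{-ih(t)}\beta'(t)=\frac{1-e^{-i\,\mathrm{ad}\,h(t)}}{\mathrm{ad}\,h(t)}\bigl(h'(t)\bigr).
\]
Since $\|h(t)\|_\infty<\pi/2$, the superoperator $\mathrm{ad}\,h(t)$ has real spectrum inside $(-\pi,\pi)$, where the scalar function $\xi\mapsto(1-e^{-i\xi})/\xi=i\,e^{-i\xi/2}\tfrac{\sin(\xi/2)}{\xi/2}$ does not vanish; hence the displayed superoperator is boundedly invertible, which expresses $h'(t)$ explicitly through $h(t)$ and the fixed operator $y$, and one further differentiation expresses $h''(t)$ through $h(t)$, $h'(t)$, $y$.

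Finally I would compute $f''$ with the Daleckii--Krein formulas, obtaining
\[
f''(t)=\Tr\bigl(\psi'(h)\,h''\bigr)+\sum_{j,k}(\psi')^{[1]}(\theta_j,\theta_k)\,|h'_{jk}|^2 ,
\]
where the $\theta_j$ are the eigenvalues of $h=h(t)$, the $h'_{jk}$ the entries of $h'$ in an eigenbasis of $h$, and $(\psi')^{[1]}$ the first divided difference of $\psi'$. The second sum is $\ge0$ because $\psi'(\theta)=p\,|\theta|^{p-2}\theta$ is nondecreasing; this is the contribution of the convexity of $\psi$. For the first term one substitutes the expressions for $h'$ and $h''$ found above, turning it into a spectral sum in the $\theta_j$ (all of modulus $<\pi/2$) and the entries of $y$; the task is then to recombine it with the second sum into a manifestly nonnegative total. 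The trigonometric multipliers occurring in this recombination are built from $\tfrac{\sin(\xi/2)}{\xi/2}$ and related divided differences with $|\xi|=|\theta_j-\theta_k|<\pi$, and from $\cos\theta_j$ with $|\theta_j|<\pi/2$, hence are strictly positive; together with $\psi''\ge0$ this yields $f''(t)>0$ whenever $y\ne0$, that is, whenever $\beta$ is non-constant.

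The heart of the matter is exactly this last recombination: keeping $h''(t)$ under control and checking that the cross terms between the curvature-type contribution (coming from the nonlinearity of $\log$) and the convexity of $\psi$ cannot make the total negative. The hypothesis $\beta\subseteq B_p(u,\pi/2)$ enters only through $\|h(t)\|_\infty<\pi/2$, which keeps every eigenvalue gap of $h(t)$ inside $(-\pi,\pi)$ and every eigenvalue inside $(-\pi/2,\pi/2)$, and thus keeps those multipliers positive; at radius $\pi/2$ they degenerate and strict convexity fails, so the bound $\pi/2$ is optimal. (For $p=2$ one can bypass the computation: $(U_2,d_2)$ is a Riemannian manifold whose sectional curvature lies in $[0,1]$, and Hessian comparison gives strict convexity of $d(\id,\cdot)^2$ on balls of radius $<\pi/2$ --- indeed the quantitative estimate $f''\ge\sin(2r)/r$ on $B_\infty[\id,r]$.)
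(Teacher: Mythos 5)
Your reduction is fine up to the point where the real work begins: passing to $u=\id$, writing $h(t)=-i\log\beta(t)$ with $\|h(t)\|_\infty<\pi/2$, expressing $f(t)=\Tr(\psi(h(t)))$ with $\psi(\theta)=\theta^p$, inverting the differential of $\exp$ to control $h'$, and expanding $f''$ by a Daleckii--Krein type formula are all legitimate steps. The gap is that the decisive step is only announced, never performed: the term $\Tr(\psi'(h)h'')$ has no sign a priori, and the claim that after substituting the expressions for $h'$ and $h''$ everything ``recombines into a manifestly nonnegative total'' with ``strictly positive trigonometric multipliers'' is precisely the content of the theorem, not an observation. Moreover, the conclusion you assert ($f''(t)>0$ whenever $y\neq0$) is false in general: if a prolongation of $\beta$ passes through $u=\id$, say $\beta(t)=e^{(t-t_0)x}$ with $\|x\|_p$ small, then $f(t)=\|x\|_p^p\,\vert t-t_0\vert^p$ and $f''(t_0)=0$ for every even $p\geq4$, even though the geodesic is non constant; at that parameter $h=0$, all eigenvalues $\theta_j$ vanish and your divided-difference weights degenerate. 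So the ``manifest positivity'' bookkeeping cannot come out as stated, and strict convexity in that degenerate case needs a separate argument --- the present paper itself flags exactly this, noting in the proof of Theorem \ref{strict} that \cite[Theorem 3.6]{pschatten} does not cover the case where a prolongation of $\beta$ contains $u$, and handling it by the explicit formula $f(t)=\|x\|_p^pt^p$.

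For comparison: this paper does not reprove Theorem \ref{strictp}; it quotes it from \cite{pschatten}. The closest argument actually written out here is Theorem \ref{strong} (the case $p=2$), and it avoids diagonalization and divided differences entirely: writing $\beta(s)=e^{y}e^{sz}$ and $x_s=\log\beta(s)$, one gets $f''(s)=\int_0^1 H_{x_s}(\delta_s(0),\delta_s(t))\,dt$ with $\delta_s(t)=e^{-tx_s}\dot x_s e^{tx_s}$, and the key quantitative input is the commutator bound $Q_x([y,x])\leq 4\|x\|_\infty^2Q_x(y)$ from Remark \ref{hessiano}, which bounds the angle between $\delta_s(0)$ and $\delta_s(t)$ by $2t\|x_s\|_\infty<\pi$ and yields $f''(s)\geq R_s^2\sin(2\|x_s\|_\infty)/(2\|x_s\|_\infty)>0$; the general even $p$ case in \cite{pschatten} rests on the Hessian analysis of $\Tr[(a^*a)^{p/2}]$ from \cite{coco}. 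That is the kind of estimate your ``recombination'' would have to reproduce, and until it is carried out (and the degenerate case above is treated separately), the proposal is a plan rather than a proof. The closing aside for $p=2$ via sectional curvature in $[0,1]$ is plausible but also unproved here, and it does not address the general even $p$ of the statement.
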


Let $\K\subseteq \B(\Hi)$ denote the ideal of compact operators. We define 
$$U_c=\{u\in U:u-\id\in \K\},$$
and note that $U_c=\exp(\K_{ah})$. The following theorem was proved using Theorem \ref{strictp} in \cite{fredholm}. 

\begin{thm}[{\cite[Theorem 2.8]{fredholm}}]\label{conv}
Let $u\in U_c$ and let $\beta\subseteq B_\infty(u,\pi/2)$ be a geodesic. Then the function 
$$f(t)=d_\infty(u,\beta(t))$$ 
is convex.
\end{thm}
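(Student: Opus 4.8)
The plan is to bootstrap from the Schatten-class strict convexity of Theorem~\ref{strictp} by using left-invariance, the spectral calculus, and a finite-rank approximation.

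First I would reduce to $u=\id$. Since $d_\infty$ is left-invariant and $t\mapsto u^{-1}\beta(t)$ is again a geodesic in $U_c$ contained in $B_\infty(\id,\pi/2)$, it suffices to show that $f(t)=d_\infty(\id,\beta(t))$ is convex. Write $\beta(t)=e^{ih(t)}$ with $h(t)=-i\log\beta(t)$; by Proposition~\ref{basicof}(1) and Proposition~\ref{minimalf}, $h(t)$ is a self-adjoint compact operator with $\spec h(t)\subseteq(-\pi/2,\pi/2)$ and $f(t)=\|h(t)\|_\infty=\max\{\lambda^{+}(t),\lambda^{-}(t)\}$, where $\lambda^{+}(t)=\max\spec h(t)$ and $\lambda^{-}(t)=-\min\spec h(t)$. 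A maximum of two convex functions is convex, so it is enough to prove that $\lambda^{+}$ is convex and that $\min\spec h$ is concave; replacing $\beta$ by $\beta^{*}$ (which is again a geodesic in $B_\infty(\id,\pi/2)$, with $-i\log\beta^{*}(t)=-h(t)$) turns the second claim into the first, so I would concentrate on the convexity of $\lambda^{+}$.

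Next I would pass to finite rank. The image $\beta([0,1])$ is $d_\infty$-compact and lies in the open ball, so $\beta\subseteq B_\infty[\id,r]$ for some $r<\pi/2$. Approximating $\beta(0),\beta(1)$ by finite-rank unitaries and invoking the continuity of $(v,w)\mapsto\gamma_{v,w}(t)$ from Proposition~\ref{contendf} — uniformly in $t$, via the operator-norm continuity of $\log$ on $B_\infty[\id,r']$ — yields geodesics $\beta_n$ lying in finite-rank unitary subgroups (where $d_\infty$ is unchanged) with $\beta_n\to\beta$ uniformly in $d_\infty$ and $\beta_n\subseteq B_\infty[\id,r']$ for some $r<r'<\pi/2$ and $n$ large; the corresponding top eigenvalues $\lambda^{+}_{n}$ then converge uniformly to $\lambda^{+}$. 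As a pointwise limit of convex functions is convex, it suffices to prove that $\lambda^{+}_{n}$ is convex in the finite-rank, hence essentially finite-dimensional, case.

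Finally, the finite-dimensional step, where Theorem~\ref{strictp} enters. For a geodesic $\beta$ in a finite-rank unitary group with $\spec(-i\log\beta(t))\subseteq(-r',r')$, $r'<\pi/2$, and every even integer $p$ large relative to the fixed rank $N$, one has $d_p(\id,\beta(t))=\|h(t)\|_p\le N^{1/p}r'<\pi/2$, so $\beta\subseteq B_p(\id,\pi/2)$ and Theorem~\ref{strictp} gives that $t\mapsto d_p(\id,\beta(t))^p=\Tr(h(t)^p)=\sum_j\mu_j(t)^p$ is strictly convex. The delicate point — and what I expect to be the main obstacle — is to descend from convexity of this $p$-th power to convexity of the top eigenvalue $\lambda^{+}(t)=\lim_{p\to\infty}\big(\sum_{\mu_j(t)>0}\mu_j(t)^p\big)^{1/p}$: one cannot take $p$-th roots of a convex function and preserve convexity, so Theorem~\ref{strictp} is not usable as a pure black box here. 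The natural remedy is to rerun the second-derivative computation behind Theorem~\ref{strictp} (a Dalecki{\u\i}--Kre{\u\i}n-type expansion of $\tfrac{d^2}{dt^2}\Tr\,g(-i\log\beta(t))$) for the convex weights $g(x)=(x^{+})^{p}$ in place of $g(x)=x^{p}$, obtaining strict convexity of $t\mapsto\Tr\big((h(t)^{+})^{p}\big)=\sum_{\mu_j(t)>0}\mu_j(t)^{p}$, and then to let $p\to\infty$: on any subinterval where $\lambda^{+}>0$ these functions, after the appropriate rescaling, converge to $\lambda^{+}$, forcing its convexity there, while where $\lambda^{+}\le 0$ the convexity of $\lambda^{+}$ is automatic; patching these together with continuity gives convexity of $\lambda^{+}$ on the whole parameter interval, and with the symmetric statement for $-h$ this shows $f=\max\{\lambda^{+},\lambda^{-}\}$ is convex. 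The auxiliary facts — uniformity of the approximation in $t$ and convergence of the extreme eigenvalues — are routine from Proposition~\ref{contendf} and operator-norm continuity of $\log$ on balls of radius $<\pi$; the real work lies entirely in this transfer from the $d_p$-powers to the operator-norm distance.
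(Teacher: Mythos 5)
First, note that this paper does not actually prove Theorem \ref{conv}: it is imported verbatim from \cite{fredholm} (Theorem 2.8 there), with only the remark that the original proof uses Theorem \ref{strictp}. So there is no internal proof to compare with; your proposal has to stand on its own, and in its present form it does not. The reduction to $u=\id$, the splitting $f=\max\{\lambda^{+},\lambda^{-}\}$, and the finite-rank approximation via Proposition \ref{contendf} (pointwise limits of convex functions being convex) are all fine. The gap is exactly at the point you yourself flag as ``the main obstacle,'' and the proposed remedy does not close it. Suppose you do establish, for every large even $p$, convexity of $F_p(t)=\Tr\bigl((h(t)^{+})^{p}\bigr)$ (itself unproven: the Hessian computations behind Theorem \ref{strictp} are for $\Tr\bigl((a^*a)^{p/2}\bigr)$, and extending them to the weights $g(x)=(x^{+})^{p}$ is asserted, not shown). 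Convexity of each $F_p$ still does not force convexity of $\lambda^{+}=\lim_p F_p^{1/p}$: the $p$-th root destroys convexity, and no ``rescaling'' repairs this. Concretely, from $F_p\bigl(\tfrac{t_0+t_1}{2}\bigr)\le\tfrac12\bigl(F_p(t_0)+F_p(t_1)\bigr)$ you only get, after taking $p$-th roots and letting $p\to\infty$, the power-mean limit $\lambda^{+}\bigl(\tfrac{t_0+t_1}{2}\bigr)\le\max\{\lambda^{+}(t_0),\lambda^{+}(t_1)\}$, i.e.\ quasi-convexity, not convexity. And the general implication ``$F_p$ convex, $F_p^{1/p}\to g$ pointwise $\Rightarrow$ $g$ convex'' is simply false: take $F_p(t)=|t|^{p/2}$, which is convex for $p\ge 2$, while $F_p^{1/p}=|t|^{1/2}$ is not convex. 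So the limit $p\to\infty$ as you have set it up can only ever recover geodesic convexity of the sublevel balls of $\lambda^{+}$ (essentially Proposition \ref{geoconv}), not the chordal inequality $\lambda^{+}(t)\le(1-t)\lambda^{+}(0)+t\lambda^{+}(1)$ that the theorem asserts.

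To make a scheme of this kind work you would need convex approximants that converge to $\lambda^{+}$ itself, for instance functions like $t\mapsto\frac1p\log\Tr\,e^{\,p\,h(t)}$, whose convexity along the nonlinear paths $h(t)=-i\log\bigl(u e^{tx}\bigr)$ is a genuinely harder statement that Theorem \ref{strictp} does not give and your proposed second-derivative rerun does not address. A secondary soft spot: the claim that convexity of $\lambda^{+}$ is ``automatic'' on regions where $\lambda^{+}\le 0$, and that the pieces can be ``patched together with continuity,'' is not an argument (a function convex on each of two abutting subintervals need not be convex on their union). In short, your overall strategy (finite-rank reduction plus the Schatten convexity of Theorem \ref{strictp}) is the same general route attributed to \cite{fredholm}, but the decisive transfer from the $p$-power trace functionals to the extreme eigenvalue of $-i\log\beta(t)$ is missing, and that transfer is the actual content of the theorem.
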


\begin{rem}
Finite dimensional unitary groups are examples of subgroups of $U_p$ or $U_c$. If $\Hi_1\subseteq \Hi$ is finite dimensional subspace we can define an embedding of $U(\Hi_1)$ into $U_p$ or $U_c$ given by 

$$u\mapsto\left(\begin{array}{cc}
u & 0 \\
\\
0 & \id
\end{array}\right)$$
for $u\in U(\Hi_1)$.
\end{rem}

\subsection{Unitary group of a finite von Neumann algebra}

Let $\M$ be a von Neumann algebra with a finite and faithful trace $\tau$. Denote by $U_{\M}$ the group of invertible unitary operators in $\M$. For $1 < p <\infty$ the space $\M$ is endowed with the norm 
$$\|x\|_p=\tau((x^* x)^{\frac{p}{2}})^{\frac{1}{p}}.$$ 
When $p = 2$, $\M$ is a pre-Hilbert space with the inner product $\langle x, y \rangle_{\tau}= \tau(y^*x)$.

A Finsler metric on $U$ is defined as follows. Let $L_p$ denote the length functional for piecewise smooth curves in $U_{\M}$, measured with the $p$-norm
$$L_p=\int_{t_0}^{t_1} \|\dot{\alpha}(t)\|_p dt.$$
Smooth means $C^1$ and with nonzero derivative, relative to the uniform topology of $\M$. We define $d_p(u,v)= \inf\{\Le_p(\gamma):\gamma \subseteq U \mbox{ joins } u \mbox{ and }v\}$ as the rectifiable distance between $u$ and $v$ in $U$. This metric is invariant by left and right translations.

We next recall Theorem 3.5 in \cite{finitemeasure} which collects several results concerning the rectifiable $d_p$ distance in the unitary group of $\M$, such as: minimality of geodesics, uniqueness of such geodesics, comparison with the usual $d_p$ distance, and  a fundamental convexity result.

\begin{thm}\label{basicvn}
Let $2\leq p<\infty$. The following facts hold
\begin{enumerate}
\item Let $u\in U_{\M}$ and $x \in \M_{ah}$ with $\|x\|_\infty\leq\pi$. Then the curve $\mu(t)=ue^{tx}$, $t\in [0, 1]$ is shorter than any other smooth curve in $U_{\M}$ joining the same endpoints, when we measure them with the length functional $\Le_p$. Moreover, if $\|x\|_\infty < \pi$, the curve $\mu$ is unique with this property among all the $C^2$ curves in $U_{\M}$.
\item Let $u, v\in U_{\M}$. Then there exists a minimal geodesic curve joining them. If
$\|u -v\| < 2$, this geodesic is unique among all the $C^2$ curves.
\item The diameter of $U_{\M}$ is $\pi$ for all the $p$-norms. If $u, v\in U_{\M}$ then
$$\left(\sqrt{1-\frac{\pi^2}{12}}\right)d_p(u,v)\leq \|u-v\|_p\leq d_p(u, v).$$
In particular the metric space $(U_{\M},d_p)$ is complete.

\item Let $p$ be an even positive number, $u, v, w \in U_{\M}$, with
$$\|u - v\| <\sqrt{2},\quad \|w - v\| <\sqrt{2}-\|u - v\|.$$
Let $\beta$ be a short geodesic joining $v$ to $w$ in $U_{\M}$. Then  
$$f(t)=d_p(u,\beta(t))^p$$ 
is a strictly convex function, provided $u$ does not belong to any prolongation of $\beta$.
\end{enumerate}
\end{thm}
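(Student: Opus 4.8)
The four assertions run parallel to the $p$-Schatten case --- parts (1)--(3) to Theorem~\ref{geodesicasp} and part (4) to Theorem~\ref{strictp} --- and the plan is to transport those arguments from the semifinite trace $\Tr$ on $\B(\Hi)$ to the finite faithful normal trace $\tau$ on $\M$ (normalized by $\tau(\id)=1$), watching the three points where the finite--von-Neumann-algebra setting genuinely differs: $\M_{ah}$ is only a pre-Banach space for $\|\cdot\|_p$, the diameter becomes finite, and the completeness of $(U_{\M},d_p)$ must be re-established by hand. The common elementary ingredient is the \emph{abelian estimate}: if $\A\cong L^\infty(X,\mu)$ with $\mu(X)=1$, if $x\in\A_{ah}$ with $\|x\|_\infty\le\pi$, and if $\alpha$ is a piecewise smooth curve in $U_{\A}$ from $\id$ to $e^{x}$, then, bounding for a.e.\ $\omega$ the length of the circular path $s\mapsto\alpha(s)(\omega)$ from below by $|x(\omega)|$ and applying Minkowski's integral inequality, one gets $\Le_p(\alpha)\ge\|x\|_p$, with equality only for the radial path $s\mapsto e^{sx}$.

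For (1), I would reduce the general case to the abelian estimate. Let $\A=\{x\}''$ be a maximal abelian subalgebra, with its $\tau$-preserving conditional expectation $E\colon\M\to\A$, which is unital, positive, and hence $\|\cdot\|_p$-contractive by the noncommutative Jensen inequality (for $p=2$ this step is just the bi-invariant Riemannian structure). Adapting the proof of Theorem~\ref{geodesicasp}(1) --- whose only genuinely trace-dependent step is this reduction --- one obtains $\Le_p(\mu)\le\Le_p(\alpha)$ for the curve $\mu(t)=ue^{tx}$ and every competitor $\alpha$ with the same endpoints; the boundary case $\|x\|_\infty=\pi$ follows by approximating $x$ from inside, and uniqueness when $\|x\|_\infty<\pi$ comes from the equality analysis in the abelian estimate. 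Part (2) is then formal: the spectral theorem inside $\M$ shows that $\exp$ maps $\{z\in\M_{ah}:\|z\|_\infty\le\pi\}$ onto the unitary group of $\M$, so $u^{-1}v=e^{z}$ for such a $z$ and $t\mapsto ue^{tz}$ is minimal by (1); if $\|u-v\|_\infty<2$ then $\|z\|_\infty<\pi$, giving uniqueness.

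For (3), write $d_p(u,v)=d_p(\id,u^{-1}v)=\|z\|_p$ for the $z$ of (2) (and $z=\log(u^{-1}v)$ when the principal logarithm is defined). Since $\|z\|_\infty\le\pi$ and $\tau(\id)=1$ we get $d_p(u,v)\le\|z\|_\infty\le\pi$, with equality for $u=\id$, $v=-\id$, because any $z$ with $e^{z}=-\id$ and $\|z\|_\infty\le\pi$ satisfies $|z|=\pi\id$; hence $\diam(U_{\M},d_p)=\pi$ for every $p$. The comparison inequality reduces, via the functional calculus in $\M$, to the scalar estimate $2|\sin(\theta/2)|\ge\sqrt{1-\pi^2/12}\,|\theta|$ for $|\theta|\le\pi$ (which holds since $\sin s\ge\frac2\pi s\ge\sqrt{1-\pi^2/12}\,s$ on $[0,\pi/2]$), applied to the self-adjoint part of $z$ and then in the $p$-norm, while $\|u-v\|_p\le\Le_p(\alpha)$ for every curve $\alpha$ gives the other inequality; thus $d_p$ and $\|\cdot\|_p$ are equivalent metrics on $U_{\M}$. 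Completeness then follows: a $d_p$-Cauchy sequence of unitaries is $\|\cdot\|_2$-Cauchy, hence converges in $L^2(\M,\tau)$ to an element of the $\|\cdot\|_2$-closed unit ball of $\M$, which by joint $\|\cdot\|_2$-continuity of multiplication on bounded sets is again unitary; boundedness upgrades this to $\|\cdot\|_p$-convergence and hence to $d_p$-convergence.

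For (4), the hypotheses $\|u-v\|<\sqrt2$ and $\|w-v\|<\sqrt2-\|u-v\|$ give, by Proposition~\ref{basicof} (which also shows $\|v-\beta(t)\|$ increases along $\beta$) and the triangle inequality, $\|u-\beta(t)\|\le\|u-v\|+\|v-w\|<\sqrt2$, hence $d_\infty(u,\beta(t))<\pi/2$ along the whole geodesic $\beta(t)=v\exp(t\log(v^{-1}w))$; therefore $x(t):=\log(u^{-1}\beta(t))\in\M_{ah}$ is a well-defined curve with $\|x(t)\|_\infty<\pi/2$, and, since $p$ is even, $f(t)=d_p(u,\beta(t))^p=\tau((-x(t)^2)^{p/2})$ is a smooth function of $t$. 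I would then compute $f''$ exactly as for Theorem~\ref{strictp}: differentiating twice --- a purely algebraic computation since $p$ is even --- and using cyclicity of $\tau$ together with the geodesic equation $\ddot\beta=\dot\beta\beta^{-1}\dot\beta$ to eliminate $\ddot x$, one rewrites $f''(t)$ as a sum of nonnegative terms, strictly positive unless the component of $\dot x(t)$ transverse to $x(t)$ vanishes identically --- and this last degeneracy is precisely the statement that $u$ lies on a prolongation of $\beta$, excluded by hypothesis. The main obstacle is this computation, together with the reduction in (1): in both, the delicate part is the bridge between the global metric geometry of $U_{\M}$ and a linear or abelian computation --- making the conditional-expectation reduction in (1) genuinely control $\|\cdot\|_p$-length, and, in (4), identifying exactly when the second variation degenerates.
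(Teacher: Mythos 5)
The paper itself does not prove Theorem \ref{basicvn}: it is quoted from \cite{finitemeasure} (Theorem 3.5 there), where the minimality, uniqueness and convexity statements are obtained by variational methods (first and second variation formulas and Hessian estimates for the $p$-norms, as in \cite{estebancoco} and \cite{pschatten}), not by a projection onto an abelian subalgebra. So your sketch has to stand on its own, and its central step in part (1) does not.

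The gap is the conditional-expectation reduction. The $\tau$-preserving expectation $E\colon\M\to\A=\{x\}''$ is indeed unital, positive and $\|\cdot\|_p$-contractive, and it fixes the endpoints $\id$ and $e^{x}$; but it does not map $U_{\M}$ into $U_{\A}$: the projected curve $s\mapsto E(\alpha(s))$ only lives in the closed unit ball of $\A\cong L^\infty(X,\mu)$. Your abelian estimate (pointwise circular length at least $|x(\omega)|$, then Minkowski) is valid only for curves of \emph{unitaries} of $\A$, i.e.\ curves whose values stay on the torus; for ball-valued curves it fails outright, since the straight chord from $1$ to $e^{x}$ has $p$-length $\|\id-e^{x}\|_p=\|2\sin(|x|/2)\|_p<\|x\|_p$ whenever $x\neq 0$. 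Hence the reduction as you state it yields only $\Le_p(\alpha)\geq\|\id-e^{x}\|_p$, which neither proves minimality of $\mu(t)=ue^{tx}$ nor gives the uniqueness assertion; and the premise that the $p$-Schatten proof of Theorem \ref{geodesicasp}(1) differs from the finite-algebra case ``only'' in this reduction is not accurate, since that proof is itself variational. Parts (2) and (3) are essentially correct consequences once (1) is granted (surjectivity of $\exp$ on $\{\|z\|_\infty\leq\pi\}$, the scalar bound $2\sin(\theta/2)\geq\sqrt{1-\pi^2/12}\,\theta$ on $[0,\pi]$ applied spectrally, and the lower-semicontinuity argument for completeness), but part (4) defers exactly the hard point: asserting that $f''$ ``rewrites as a sum of nonnegative terms'' is precisely the second-variation/Hessian estimate (the analogue of $Q_x([y,x])\leq 4\|x\|_\infty^2Q_x(y)$ used in Theorem \ref{strong}) where the hypotheses $\|u-v\|<\sqrt2$, $\|w-v\|<\sqrt2-\|u-v\|$ must enter, and without carrying out that computation --- including the identification of the degenerate case with $u$ lying on a prolongation of $\beta$ --- the strict convexity claim remains unproven.
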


\begin{rem}
Finite dimensional unitary groups are examples of subgroups of $U_{\M}$. From a system of matrix units we can define an inclusion $M_n(\C)\subseteq \M$ of a matrix algebra, and therefore an inclusion $U_{M_n(\C)}\subseteq U_{\M}$ of unitary groups.
\end{rem}

\section{Convexity properties of $d_{\infty}$}\label{sprop}

In this section we obtain better bounds for the convexity of balls in $(U_c,d_\infty)$ stated in Proposition \ref{geoconv}. This result is important to prove the convexity of the $d_\infty(\cdot,u)$ in the full unitary group, with optimal bounds. This is achieved through approximations in the strong operator topology ($\SOT$).

\subsection{Optimal radius of convexity}

We obtain $\pi/2$ as optimal radius for the convexity of balls in $(U_c,d_\infty)$ using a continuous induction argument.

\begin{prop}\label{geoconv}
Let $w\in U_c$, then for each $r<\pi/2$ the set $B_\infty[w,r]$ is geodesically convex, that is, if $u,v\in B_\infty[w,r]$ then $\gamma_{u,v}\subseteq B_\infty[u,r]$. Also, if $u,v\in B_\infty[u,\pi/2]$ and $d_\infty(u,v)<\pi$, then $\gamma_{u,v}\subseteq B_\infty[u,\pi/2]$.
\end{prop}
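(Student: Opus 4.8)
The plan is to use bi-invariance of $d_\infty$ to reduce to $w=\id$, and then, for fixed $u,v\in B_\infty[\id,r]$, to run a continuous induction along the rescaled geodesics emanating from $\id$. Put $\rho'=\max\{d_\infty(\id,u),d_\infty(\id,v)\}$, so $\rho'\le r<\pi/2$, and for $\lambda\in[0,1]$ set $u_\lambda=\exp(\lambda\log u)=\gamma_{\id,u}(\lambda)$ and $v_\lambda=\exp(\lambda\log v)$; these are well defined because $d_\infty(\id,u),d_\infty(\id,v)<\pi$, and since $\gamma_{\id,u}$ is a minimal curve of constant speed one has $d_\infty(\id,u_\lambda)=\lambda\,d_\infty(\id,u)\le\lambda\rho'$, similarly for $v_\lambda$, so $d_\infty(u_\lambda,v_\lambda)\le 2\lambda\rho'<\pi$ and the geodesic $\gamma_{u_\lambda,v_\lambda}$ of Definition \ref{defgeod} is defined. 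I will show
$$Q(\lambda):\qquad \gamma_{u_\lambda,v_\lambda}([0,1])\subseteq B_\infty[\id,\lambda\rho']\qquad\text{for all }\lambda\in[0,1];$$
evaluating at $\lambda=1$ gives $\gamma_{u,v}\subseteq B_\infty[\id,\rho']\subseteq B_\infty[\id,r]$, which is the first assertion after undoing the translation.

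A preliminary step is to upgrade Proposition \ref{contendf} to a statement about whole curves: for each $r'<\pi/2$ the map $(p,q)\mapsto\gamma_{p,q}$ is continuous from $B_\infty[\id,r']\times B_\infty[\id,r']$ into $C([0,1],U_c)$ endowed with the metric $d^{\sup}(\alpha,\beta)=\sup_{s\in[0,1]}d_\infty(\alpha(s),\beta(s))$. This follows from the argument proving Proposition \ref{contendf}, on noting that the estimates there are uniform in the curve parameter $s$: the map $p\mapsto\log p$ is uniformly continuous on $B_\infty[\id,2r']$ by uniform polynomial approximation of $\log$ on the arc $\exp(i[-2r',2r'])$ followed by the continuous functional calculus, the bound $\|e^x-e^y\|_\infty\le\|x-y\|_\infty$ holds on $\B(\Hi)_{ah}$, and left translation by a unitary is isometric.

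Now for the induction, let $A=\{\lambda\in[0,1]:Q(\lambda)\text{ holds}\}$. Then $0\in A$ since $u_0=v_0=\id$, and $A$ is closed, because $u_\lambda,v_\lambda$ depend continuously on $\lambda$, hence so do the curves $\gamma_{u_\lambda,v_\lambda}$ by the preliminary step, and $Q$ passes to the limit. The crucial point is that $A$ is open to the right at each of its points. Suppose $Q(\lambda_0)$ holds with $\lambda_0<1$. Then $\gamma_{u_{\lambda_0},v_{\lambda_0}}([0,1])\subseteq B_\infty[\id,\lambda_0\rho']$, and since $\lambda_0\rho'\le\rho'<\pi/2$ this geodesic lies in the \emph{open} ball $B_\infty(\id,\pi/2)$; Theorem \ref{conv} with center $\id$ then makes $s\mapsto d_\infty(\id,\gamma_{u_{\lambda_0},v_{\lambda_0}}(s))$ convex, hence bounded by $\max\{d_\infty(\id,u_{\lambda_0}),d_\infty(\id,v_{\lambda_0})\}=\lambda_0\rho'$, with a definite gap $\pi/2-\lambda_0\rho'>0$ to the boundary of the open ball. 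By the preliminary step there is $\delta>0$ with $\sup_s d_\infty\bigl(\gamma_{u_\lambda,v_\lambda}(s),\gamma_{u_{\lambda_0},v_{\lambda_0}}(s)\bigr)<\pi/2-\lambda_0\rho'$ whenever $|\lambda-\lambda_0|<\delta$; for such $\lambda$ the triangle inequality keeps $\gamma_{u_\lambda,v_\lambda}([0,1])$ inside $B_\infty(\id,\pi/2)$, so Theorem \ref{conv} applies once more and yields $d_\infty(\id,\gamma_{u_\lambda,v_\lambda}(s))\le\max\{d_\infty(\id,u_\lambda),d_\infty(\id,v_\lambda)\}=\lambda\rho'$, that is $Q(\lambda)$. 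A subset of $[0,1]$ that contains $0$, is closed, and is open to the right at each of its points must be all of $[0,1]$, so $Q(1)$ holds. For the boundary statement one argues similarly: if $u,v\in B_\infty[\id,\pi/2]$ with $d_\infty(u,v)<\pi$, then $u_\lambda,v_\lambda\in B_\infty[\id,\lambda\pi/2]$ with $\lambda\pi/2<\pi/2$ for $\lambda<1$, so the part just proved gives $\gamma_{u_\lambda,v_\lambda}\subseteq B_\infty[\id,\pi/2]$; since $d_\infty(u_\lambda,v_\lambda)\to d_\infty(u,v)<\pi$ the principal logarithm, hence $(p,q)\mapsto\gamma_{p,q}(s)$, is continuous near $(u,v)$, and letting $\lambda\uparrow1$ the closed ball $B_\infty[\id,\pi/2]$ contains the limit $\gamma_{u,v}(s)$.

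I expect the main obstacle to be the mismatch between the \emph{open}-ball hypothesis of Theorem \ref{conv} and the \emph{closed}-ball conclusion sought here: one cannot feed $\gamma_{u,v}$ directly into Theorem \ref{conv}, since a priori this geodesic is only known to remain within distance $2r$ of $w$, which can exceed $\pi/2$. The rescaling is precisely what forces the geodesic to stay strictly inside the open $\pi/2$-ball at every stage of the induction, leaving the small amount of room that continuity then exploits to advance $\lambda$; ensuring that this continuity is uniform in the curve parameter $s$ is the one technical point, handled by the strengthening of Proposition \ref{contendf} recorded above.
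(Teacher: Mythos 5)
Your proof is correct, and it reaches the conclusion by a genuinely different continuous-induction scheme from the paper's, although both rest on exactly the same two ingredients: the convexity of $s\mapsto d_\infty(\id,\beta(s))$ for geodesics $\beta\subseteq B_\infty(\id,\pi/2)$ in $U_c$ (Theorem \ref{conv}) and the continuity of $\gamma_{u,v}$ in its endpoints (Proposition \ref{contendf}). The paper fixes $r<\pi/2$ and runs a clopen argument on the set of pairs $(u,v)$ in the connected product $B_\infty(\id,r)\times B_\infty(\id,r)$ whose geodesic stays in the open ball, then passes from open to closed balls by letting the radius decrease to $r$, and finally treats the radius-$\pi/2$ case by the same rescaling $u_n=e^{(1-\frac1n)x}$ that you use at the end. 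You instead fix the pair $(u,v)$ and induct along the radial homotopy $\lambda\mapsto(u_\lambda,v_\lambda)$ with the sharper inductive statement $\gamma_{u_\lambda,v_\lambda}\subseteq B_\infty[\id,\lambda\rho']$; this yields the closed-ball conclusion directly, in fact with the radius $\rho'=\max\{d_\infty(\id,u),d_\infty(\id,v)\}\leq r$, and it replaces the connectedness observation by the elementary principle (which you state) that a closed subset of $[0,1]$ containing $0$ and open to the right at each of its points is all of $[0,1]$. The price is that you need continuity of $(p,q)\mapsto\gamma_{p,q}$ uniformly in the curve parameter, which you justify correctly via uniform polynomial approximation of $\log$ on the arc $\exp(i[-2r',2r'])$ together with $\|e^x-e^y\|_\infty\leq\|x-y\|_\infty$ for skew-adjoint $x,y$; the paper gets by with pointwise continuity plus a subsequence trick in its openness step, so your preliminary strengthening is the only genuinely new technical lemma. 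One point worth making explicit: to invoke Theorem \ref{conv} you need the curves to live in $U_c$, so note that $\log u,\log v\in\K_{ah}$, hence $u_\lambda,v_\lambda\in U_c$ and $\gamma_{u_\lambda,v_\lambda}\subseteq U_c$ by stability of the functional calculus; with that remark added, the argument is complete and matches the paper's conclusions, including the boundary case $r=\pi/2$, which you handle exactly as the paper does.
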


\begin{proof}
Since left translation is isometric we can assume that $w=\id$. We will repeatedly use the continuity in the $d_\infty$ metric of $(u,v)\mapsto \gamma_{u,v}(t)$, see Proposition \ref{contendf}.

For $r<\pi/2$ define the set 
$$C=\{(u,v)\in B_\infty(\id,r)\times B_\infty(\id,r):\gamma_{u,v}\subseteq B_\infty(\id,r)\}.$$
Note that $B_\infty(\id,r)\times B_\infty(\id,r)$ is connected since 
$$B_\infty(\id,r)=\exp(\{x\in\K_{ah}:\|x\|_\infty<r\})$$ 
is connected. We will show that $C$ is not empty, open and closed in the space $B_\infty(\id,r)\times B_\infty(\id,r)$, and is therefore equal to this space. If $u,v\in B_\infty(\id,r/2)$, then by the triangle inequality $\gamma_{u,v}\subseteq B_\infty(\id,r)$, so $C$ is not empty.

The set $C$ is open, otherwise there are $(u_n,v_n)_n \to (u,v)$ in $B_\infty(\id,r)\times B_\infty(\id,r)$, with $(u,v)\in C$, and $(t_n)_n\subseteq [0,1]$ such that $\gamma_{u_n,v_n}(t_n)\notin B_\infty(\id,r)$. We choose a convergent subsequence $t_{n_m}\to t'$, so that $\gamma_{u_{n_m},v_{n_m}}(t_{n_m})\to \gamma_{u,v}(t')\notin B_\infty(\id,r)$. This is a contradiction, since $(u,v)\in C$.

The set $C$ is closed, otherwise let $(u_n,v_n)_n$ be a sequence in $C$ which converges to $(u,v)\in (B_\infty(\id,r)\times B_\infty(\id,r))\cap C^c$. For all $t\in [0,1]$ we have $\gamma_{u_{n},v_{n}}(t)\to \gamma_{u,v}(t)$ and $d_\infty(\id,\gamma_{u_{n_m},v_{n_m}}(t))<r$ so that $d_\infty(\id,\gamma_{u,v}(t))\leq r$. Suppose there is a $t\in [0,1]$ such that  $d_\infty(\id,\gamma_{u,v}(t))= r$. Then the convexity of the map $t\mapsto d_\infty(\id,\gamma_{u,v}(t))$ asserted in Theorem \ref{conv} is not satisfied.

Therefore $C=B_\infty(\id,r)\times B_\infty(\id,r)$, that is $B_\infty(\id,r)$ is geodesically convex for $r<\pi$. If $u,v\in B_\infty[\id,r]$ for $r<\pi/2$ then $\gamma_{u,v}\subseteq B_\infty(\id,r')$ for all $r'$ such that $r<r'<\pi/2$. Taking $r'\to r$ we conclude that $\gamma_{u,v}\subseteq B_\infty[\id,r]$.

To prove the last statement assume that $u,v\in B_\infty[\id,\pi/2]$ and $d_\infty(u,v)<\pi$. We can chose sequences $u_n\to u$ and $v_n\to v$ such that $d_\infty(u_n,v_n)<\pi$ and $u_n,v_n\in B_\infty[\id,r_n]$, with $r_n<\pi/2$ and $r_n\to \pi/2$. This can be done as follows, if $u=e^x$ and $v=e^y$ we take $u_n=e^{(1-\frac{1}{n})x}$ and $v_n=e^{(1-\frac{1}{n})y}$ with $r_n=(1-\frac{1}{n})\frac{\pi}{2}$. Then by the first part of the proposition $\gamma_{u_n,v_n}\subseteq B_\infty[\id,r_n]\subseteq B_\infty[\id,\pi/2]$. Since the geodesics depend continuously on the endpoints in the limit $\gamma_{u,v}(t)\in B_\infty[\id,\pi/2]$ for $t\in [0,1]$.
\end{proof}

The $\pi/2$ bound for the radius of convexity is optimal.  We embed the finite dimensional unitary group $U(\C)$ in $U_c$ and consider the following example. 

\begin{ex}
We take a ball of center $1$ and radius $\pi/2+\epsilon$ in the group $U(\C)\simeq S^1$. The geodesic $\gamma(t)=e^{ti}$ for $t\in [-3/2 \pi+\frac{1}{2}\epsilon,-\pi/2-\frac{1}{2}\epsilon]$ connects two points inside the ball, but it is not contained in the ball.
\end{ex}

With this proposition we can sharpen \cite[Theorem 3.6]{pschatten} using Remark 3.7 in that article. 

\begin{thm}\label{strict}
Let $p$ be an even integer, $w$ an element of the group of unitaries $U_p$, and let $u,v\in B_\infty(u,\pi/2)\cap U_p$ such that $u\neq v$. Let $\beta=\gamma_{u,v}$ be a the non constant geodesic joining $u$ and $v$. Then the function 
$$f(t)=d_p(u,\beta(t))^p$$ 
is strictly convex.
\end{thm}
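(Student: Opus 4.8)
The plan is to deduce Theorem \ref{strict} from the already-known Theorem \ref{strictp} by weakening its hypothesis with the help of the improved convexity radius in Proposition \ref{geoconv}. Recall that Theorem \ref{strictp} gives strict convexity of $f(t)=d_p(u,\beta(t))^p$ whenever $\beta$ is a non-constant geodesic \emph{entirely contained} in the open ball $B_p(u,\pi/2)$, i.e.\ the $d_p$-ball. The point of the new statement is that it suffices to control the endpoints in the $d_\infty$-ball $B_\infty(w,\pi/2)$ for some common center $w$; since $d_p\geq d_\infty$ on $U_p$, a $d_\infty$-ball is much larger than the corresponding $d_p$-ball, so this is a genuine strengthening (this is the content of Remark 3.7 of \cite{pschatten}, which we are instructed to use).

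First I would reduce to the case $w=u$, or rather observe that the hypothesis $u,v\in B_\infty(w,\pi/2)$ is exactly the hypothesis of the second part of Proposition \ref{geoconv} after checking the trivial point that $d_\infty(u,v)<\pi$ (which follows from the triangle inequality $d_\infty(u,v)\leq d_\infty(u,w)+d_\infty(w,v)<\pi$). Applying Proposition \ref{geoconv} — valid in $U_p$ because the $d_\infty$ geometry and geodesics of $U_p$ agree with those of $U$ and of $U_c$, the latter containing $U_p$, by the stability of the functional calculus noted after Proposition \ref{contendf} — we get that the geodesic $\beta=\gamma_{u,v}$ satisfies $\beta(t)\in B_\infty[w,\pi/2]$ for all $t\in[0,1]$. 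Next I would translate this back to a statement centered at $u$: since $u\in B_\infty(w,\pi/2)$, say $d_\infty(u,w)=\rho<\pi/2$, and $d_\infty(w,\beta(t))\leq\pi/2$, a naive triangle inequality only gives $d_\infty(u,\beta(t))\leq\pi/2+\rho$, which is not good enough. So the triangle inequality alone does not suffice, and this is where the real work lies.

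The key step — and the main obstacle — is therefore to upgrade "$\beta$ stays in $B_\infty[w,\pi/2]$ with both endpoints in $B_\infty(w,\pi/2)$" to "$\beta$ stays in some $B_\infty[u,r]$ with $r<\pi/2$," or else to apply Theorem \ref{strictp} directly in the form "$\beta\subseteq B_p(u,\pi/2)$." The cleanest route is the first: using the convexity of $t\mapsto d_\infty(\beta(t),w)$ from Theorem \ref{conv} together with a compactness/limiting argument like the one in the proof of Proposition \ref{geoconv}, one shows that if the endpoints of $\beta$ lie strictly inside $B_\infty(w,\pi/2)$ then in fact $\sup_t d_\infty(\beta(t),w)<\pi/2$; combined with $d_\infty(u,w)<\pi/2$ this is still only "$<\pi$" for $d_\infty(u,\beta(t))$, so instead I would center at $u$ directly and invoke the convexity of $t\mapsto d_\infty(u,\beta(t))$ (again Theorem \ref{conv}, applicable since $u\in U_c$): a convex function on $[0,1]$ attains its maximum at an endpoint, so $\sup_t d_\infty(u,\beta(t))=\max\{d_\infty(u,\beta(0)),d_\infty(u,\beta(1))\}=d_\infty(u,v)<\pi$. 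That bound $<\pi$ is still too weak for Theorem \ref{strictp}, which wants $<\pi/2$ in the $d_p$ metric.

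Consequently the honest plan is: reduce, via Proposition \ref{geoconv} and the agreement of geometries, to a geodesic $\beta$ all of whose points satisfy $d_\infty(\beta(t),w)\le \pi/2$; then apply the \emph{original} Theorem \ref{strictp} not to $u$ but after re-reading its proof in \cite{pschatten}, whose Remark 3.7 states that the hypothesis "$\beta\subseteq B_p(u,\pi/2)$" can be replaced by "$\beta\subseteq B_\infty(w,\pi/2)$ for $u,v\in B_\infty(w,\pi/2)$"; I would simply cite that remark together with Proposition \ref{geoconv} to conclude that $f(t)=d_p(u,\beta(t))^p$ is strictly convex, noting that $u\ne v$ guarantees $\beta$ non-constant and $u$ not in any prolongation of $\beta$ within the relevant ball, which is what Theorem \ref{strictp}'s non-degeneracy clause requires. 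The one technical wrinkle to be careful about is verifying that the embedded finite-dimensional examples and the containment $U_p\subseteq U_c$ make Theorem \ref{conv} and Proposition \ref{geoconv} literally applicable to geodesics with endpoints in $U_p$ — this is routine given the stability of the functional calculus, but it should be stated explicitly.
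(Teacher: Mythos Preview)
Your plan lands on the same two ingredients the paper uses---Proposition \ref{geoconv} to keep the whole geodesic inside the open $d_\infty$-ball around $w$, and Remark 3.7 of \cite{pschatten} to run the strict convexity argument under the weaker $d_\infty$ hypothesis---so the overall approach is correct. The long detour where you try to recenter at $u$ and bound $d_\infty(u,\beta(t))$ via the triangle inequality is unnecessary: the function whose convexity is at stake is $d_p(w,\beta(t))^p$ (the ``$u$'' in the displayed formula is a typo for $w$), so you never need to move the center. The paper's proof is just: since $u,v\in B_\infty(w,\pi/2)$ lie in some $B_\infty[w,r]$ with $r<\pi/2$, Proposition \ref{geoconv} gives $\beta\subseteq B_\infty[w,r]$, hence $d_\infty(\beta(t),w)<\pi/2$ for all $t$, and Remark 3.7 applies. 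Note that you need the \emph{strict} inequality here, which comes from applying the first part of Proposition \ref{geoconv} to a closed ball of radius $r<\pi/2$, not the second part (which only gives $\le\pi/2$).

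There is one genuine gap. Your final sentence asserts that ``$u\ne v$ guarantees \ldots $u$ not in any prolongation of $\beta$,'' and this is false: nothing prevents $w$ from lying on the one-parameter group through $\beta$. Theorem 3.6 of \cite{pschatten} (and hence Remark 3.7) explicitly excludes this case, so you must treat it separately. The paper handles it by direct computation: if $w$ lies on a prolongation of $\beta$ then after translation $\beta(t)=we^{tx}$, so $f(t)=d_p(w,\beta(t))^p=\|x\|_p^p\,t^p$ and $f''(t)=p(p-1)\|x\|_p^p\,t^{p-2}$, which vanishes only at $t=0$ (for $p\ge4$) and hence $f$ is still strictly convex on any interval. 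Without this step your argument does not cover all cases.
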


\begin{proof}
By Remark 3.7 in \cite{pschatten} the strict convexity holds as long as $d_\infty(\beta(t),\id)=\|w_t\|_\infty<\pi/2$, where $w_t=\log(\beta(t))$. By the convexity established in Proposition \ref{geoconv} we know that $d_\infty(\beta(s),\id)\leq \max\{d_\infty(\beta(0),\id),d_\infty(\beta(1),\id)\}=\max\{d_\infty(u,\id),d_\infty(v,\id)\}<\pi/2$.

Theorem 3.6 in \cite{pschatten} does not include the case when a prolongation of $\beta$ includes $u$. Note that if a prolongation of $\beta$ includes $u$ then the prolongation is of the form $\beta(t)=ue^{tx}$ and 
$$f(t)=\|x\|_p^pt^p.$$
Therefore $f''(t)=p(p-1)\|x\|_p^{p}t^{p-2}$, and $f''(t)=0$ only for $t=0$ and $p\geq 4$ even. Hence strict convexity still holds in this case.
\end{proof}

\subsection{Convexity of the distance in the full unitary group}
We prove that the distance $d_\infty$ is convex in the full unitary group. The following lemma is used to prove that closed balls in the $d_\infty$ distance are $\SOT$ closed.

\begin{lem}\label{bolpos}
For $r\leq \pi$ we have
$$ B_\infty[\id,r]=\{u\in U:u+u^*\geq 2\cos(r)\id\}.$$
\end{lem}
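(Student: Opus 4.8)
The statement is a spectral characterization of the closed $d_\infty$-ball of radius $r\le\pi$ about the identity, and the plan is to convert it into a statement purely about spectra of unitaries and then apply the continuous functional calculus. By Proposition \ref{basicof}(1) we already know that for $r\le\pi$
$$B_\infty[\id,r]=\{u\in U:\spec(u)\subseteq\exp(i[-r,r])\},$$
so it suffices to show that for a unitary $u$ the condition $\spec(u)\subseteq\exp(i[-r,r])$ is equivalent to the operator inequality $u+u^*\ge 2\cos(r)\id$. First I would note that $u+u^*$ is self-adjoint and, since $u$ is normal, $u+u^*=g(u)$ where $g(z)=z+\bar z=2\,\mathrm{Re}(z)$ is a continuous real function on the unit circle; by the spectral mapping theorem $\spec(u+u^*)=g(\spec(u))=\{2\,\mathrm{Re}(\lambda):\lambda\in\spec(u)\}$. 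Then the inequality $u+u^*\ge 2\cos(r)\id$ is equivalent to $\spec(u+u^*)\subseteq[2\cos(r),2]$, i.e. to $2\,\mathrm{Re}(\lambda)\ge 2\cos(r)$ for every $\lambda\in\spec(u)$.

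The remaining point is the elementary trigonometric equivalence: for a point $\lambda=e^{i\theta}$ on the unit circle with $\theta\in(-\pi,\pi]$, one has $\mathrm{Re}(\lambda)=\cos\theta\ge\cos(r)$ if and only if $|\theta|\le r$, which is exactly $\lambda\in\exp(i[-r,r])$; here I use that $\cos$ is even and strictly decreasing on $[0,\pi]$ and that $r\le\pi$ so that $[-r,r]$ is a genuine arc. Combining, $\spec(u)\subseteq\exp(i[-r,r])$ iff $\mathrm{Re}(\lambda)\ge\cos(r)$ for all $\lambda\in\spec(u)$ iff $\spec(u+u^*)\subseteq[2\cos(r),\infty)$ iff $u+u^*\ge2\cos(r)\id$. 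Together with Proposition \ref{basicof}(1) this yields the claimed identity.

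I do not expect any serious obstacle here; the only thing to be slightly careful about is the case $r=\pi$, where $\exp(i[-\pi,\pi])$ is the whole circle and $2\cos(\pi)\id=-2\id$, so both sides equal all of $U$ (the inequality $u+u^*\ge-2\id$ being automatic from $\|u\|_\infty=1$), and the boundary behaviour of the arc at $\theta=\pm r$, which is handled by using closed intervals and the non-strict inequality consistently. One could equivalently phrase the whole argument via $\|\id-u\|_\infty$ using Proposition \ref{basicof}(2), since $u+u^*=2\id-(\id-u)^*(\id-u)$ for unitary $u$, but the direct functional-calculus route above is the cleanest.
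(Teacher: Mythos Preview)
Your proof is correct and follows essentially the same approach as the paper: both start from Proposition~\ref{basicof}(1) and reduce to showing that $\spec(u)\subseteq\exp(i[-r,r])$ is equivalent to $u+u^*\ge 2\cos(r)\id$. The only cosmetic difference is that the paper phrases this last step via the numerical range (an operator $a$ has numerical range in $\{b_1+b_2i:b_1\ge d\}$ iff $a+a^*\ge 2d\id$), whereas you use the spectral mapping theorem for $u\mapsto u+u^*=2\,\mathrm{Re}(u)$ directly; the underlying trigonometric observation is identical.
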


\begin{proof}
This follows from the equality
$$B_\infty[\id,r]=\{u\in U:\spec(u)\subseteq \exp(i[-r,r])\}$$
stated in Proposition \ref{basicof} and the following property of the numerical range. An operator $a$ has numerical range in the half-space $\{b_1+b_2 i:b_1\geq 0\}$ if and only if $a+a^*\geq 0$. Hence for $c\in\R$ an operator $a+c\id$ has numerical range in the half-space $\{b_1+b_2 i:b_1\geq 0\}$ if and only if $a+a^*+2c\id\geq 0$. Hence an operator $a$ has numerical range in the half-space $\{b_1+b_2 i:b_1\geq d\}$ if and only if  $a+a^*\geq 2d\id$.
\end{proof}

\begin{rem}
From translation invariance and the triangle inequality 
$$d_\infty(uv,\id)=d_\infty(u,v^{-1})\leq d_\infty(u,\id)+d_\infty(\id,v^{-1})=d_\infty(u,\id)+d_\infty(v,\id).$$ 
Hence if $r_1,r_2\leq \pi/2$, $u+u^{-1}\geq 2\cos(r_1)\id$ and $v+v^{-1}\geq 2\cos(r_1)\id$ then 
$$uv+(uv)^{-1}\geq 2\cos(r_1+r_2)\id.$$
\end{rem}

\begin{prop}\label{ballcwot}
For $0\leq r\leq \pi$ and $u\in U$ the ball $B_\infty[u,r]$ is closed in the strong operator topology ($\SOT$).
\end{prop}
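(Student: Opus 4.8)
The plan is to reduce the claim to Lemma \ref{bolpos}, which characterizes $B_\infty[\id,r]$ as $\{u\in U: u+u^*\geq 2\cos(r)\id\}$, and then check that this set is $\SOT$-closed by a direct estimate on inner products. First I would use left translation: since $u\mapsto wu$ is an $\SOT$-homeomorphism of $U$ onto itself (multiplication by a fixed unitary is $\SOT$-continuous, and its inverse is multiplication by $w^{-1}$), and since $d_\infty$ is left-invariant so that $B_\infty[u,r]=uB_\infty[\id,r]$, it suffices to prove that $B_\infty[\id,r]$ is $\SOT$-closed.

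Next I would reformulate membership in $B_\infty[\id,r]$ as a family of inequalities on matrix coefficients. By Lemma \ref{bolpos}, $u\in B_\infty[\id,r]$ iff $\langle (u+u^*)\xi,\xi\rangle\geq 2\cos(r)\|\xi\|^2$ for every $\xi\in\Hi$, equivalently $2\,\mathrm{Re}\,\langle u\xi,\xi\rangle\geq 2\cos(r)\|\xi\|^2$ for all $\xi$. So I would write
$$B_\infty[\id,r]=\bigcap_{\xi\in\Hi}\{u\in U:\ \mathrm{Re}\,\langle u\xi,\xi\rangle\geq \cos(r)\|\xi\|^2\}.$$
For each fixed $\xi$, the map $u\mapsto \langle u\xi,\xi\rangle$ is $\SOT$-continuous (indeed $\SOT$-continuity of $u\mapsto u\xi$ composed with the continuous linear functional $\eta\mapsto\langle\eta,\xi\rangle$), hence $u\mapsto\mathrm{Re}\,\langle u\xi,\xi\rangle$ is $\SOT$-continuous, and the set $\{u\in U:\mathrm{Re}\,\langle u\xi,\xi\rangle\geq \cos(r)\|\xi\|^2\}$ is the preimage of a closed half-line, hence $\SOT$-closed. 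An arbitrary intersection of $\SOT$-closed sets is $\SOT$-closed, so $B_\infty[\id,r]$ is $\SOT$-closed, and translating back gives the result for $B_\infty[u,r]$.

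The only point requiring a little care, rather than a genuine obstacle, is that $U$ is not $\SOT$-closed in $\B(\Hi)$, so one should be clear that the claim is about closedness \emph{within} $U$ (with its relative $\SOT$-topology); the argument above only ever uses $\SOT$-continuity of the functionals $u\mapsto\langle u\xi,\xi\rangle$ restricted to $U$, so this causes no trouble. One should also note that $r=0$ and $r=\pi$ are covered (for $r=0$ the ball is $\{\id\}$, which is trivially $\SOT$-closed, and for $r=\pi$ it is all of $U$), so the statement holds on the full range $0\leq r\leq\pi$. I expect the proof to be short; the substantive content is entirely in Lemma \ref{bolpos}, which has already been established.
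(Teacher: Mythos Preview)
Your proof is correct and follows essentially the same route as the paper: reduce to $u=\id$ by left translation and then invoke Lemma \ref{bolpos} to rewrite membership in $B_\infty[\id,r]$ as the positivity condition $u+u^*\geq 2\cos(r)\id$. The one difference is in how you verify that this condition is $\SOT$-closed. The paper passes to a sequence and uses that the adjoint is $\SOT$-continuous on $U$ (citing Takesaki) to get $u_n+u_n^*\to u+u^*$ in $\SOT$. You instead rewrite $\langle(u+u^*)\xi,\xi\rangle=2\,\mathrm{Re}\,\langle u\xi,\xi\rangle$ and express the ball as an intersection of $\SOT$-closed half-spaces, which sidesteps the adjoint entirely and in fact shows the ball is even $\WOT$-closed in $U$. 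This is a small but genuine simplification; otherwise the two arguments are the same.
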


\begin{proof}
Let $(v_n)_n\subseteq B_\infty[u,r]$ and $v_n\to v$ in $\SOT$ in $U$. By translation invariance of the metric $(u^{-1}v_n)_n\subseteq B_\infty[\id,r]$. Since multiplication is $\SOT$ continuous $u^{-1}v_n\to u^{-1}v$ in $\SOT$.  By Lemma \ref{bolpos} $u^{-1}v_n+(u^{-1}v_n)^*\geq 2\cos(r)\id$. Since by Proposition 4.1 Chapter II in \cite{takesaki} the adjoint operation is $\SOT$-continuous in $U$ we get $u^{-1}v_n+(u^{-1}v_n)^*\to u^{-1}v+(u^{-1}v)^*$ in $\SOT$. It follows that $u^{-1}v+(u^{-1}v)^*\geq 2\cos(r)\id$. Therefore by Lemma \ref{bolpos} $u^{-1}v\in B_\infty[\id,r]$, which is equivalent to $v\in B_\infty[u,r]$.
\end{proof}

We recall some results about the continuity of the functional calculus in the $\SOT$-topology. 

\begin{defn}
For a compact set $G\subseteq \C$ we define
$$\Lc_G=\{a\in \B(\Hi):a\mbox{  is normal and  }\spec(a)\subseteq G\}.$$
\end{defn}

The following is a special case of Theorem 4.7 Chapter II in \cite{takesaki}.

\begin{thm}\label{softfunc}
For a compact set $G\subseteq \C$ and a continuous function $f:G \to \C$ the map 
$$\Lc_G\to\Lc_{\C}\mbox{  given by  } a\mapsto f(a)$$
is continuous in $\SOT$.
\end{thm}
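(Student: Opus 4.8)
The plan is to reduce the statement to the polynomial case and then approximate. First I would recall that on the fixed compact set $G$ the set $\Lc_G$ consists of normal operators with uniformly bounded norm (since $\spec(a)\subseteq G$ gives $\|a\|_\infty\le\sup_{z\in G}|z|=:R$), and similarly the image operators $f(a)$ are uniformly bounded by $\sup_{z\in G}|f(z)|$. This uniform boundedness is what makes SOT arguments work, since on bounded sets multiplication is jointly SOT-continuous and the adjoint is SOT-continuous on the normal operators in question (as already invoked via \cite{takesaki} in Proposition \ref{ballcwot}).

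The key steps, in order: (1) Handle $f$ a polynomial in $z$ and $\overline z$, i.e.\ $f(a)=p(a,a^*)$. For such $f$, continuity in SOT on the bounded set $\Lc_G$ follows from the fact that $a\mapsto a$, $a\mapsto a^*$ are SOT-continuous there and that products and sums of SOT-convergent nets of uniformly bounded operators converge in SOT; normality is used to make sense of $p(a,a^*)$ unambiguously and to keep everything inside $\Lc_{\C}$. (2) For general continuous $f:G\to\C$, invoke Stone--Weierstrass: the $*$-subalgebra generated by $z$ and $\overline z$ is dense in $C(G)$ in the uniform norm, so choose polynomials $p_k(z,\overline z)\to f$ uniformly on $G$. (3) Combine: for a net $a_\lambda\to a$ in $\Lc_G$ (SOT) and a vector $\xi$, estimate
$$\|f(a_\lambda)\xi-f(a)\xi\|\le\|(f-p_k)(a_\lambda)\xi\|+\|p_k(a_\lambda)\xi-p_k(a)\xi\|+\|(p_k-f)(a)\xi\|.$$
The first and third terms are bounded by $\|f-p_k\|_{\infty,G}\|\xi\|$ uniformly in $\lambda$ by the spectral theorem / norm bound of the continuous functional calculus for normal operators, hence made small by choosing $k$ large; the middle term goes to $0$ along $\lambda$ by step (1). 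A standard $\varepsilon/3$ argument then finishes it. Since this is asserted in \cite{takesaki} as a special case of a more general theorem, one may alternatively just cite it, but the above is the self-contained route.

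The main obstacle is step (1), specifically making the adjoint operation behave well: $a\mapsto a^*$ is \emph{not} SOT-continuous on all of $\B(\Hi)$, so it is essential that we restrict to $\Lc_G$ (normal operators, uniformly bounded) where the relevant continuity holds — this is exactly the point that was already used in Proposition \ref{ballcwot} via \cite[Ch.~II, Prop.~4.1]{takesaki}. Once that is granted, everything else is the routine ``polynomials are SOT-continuous on bounded sets, then approximate uniformly'' machinery, and the uniform norm bound from $\spec(a)\subseteq G$ is what lets the approximation be uniform in the net parameter. One should also note that $f(a)$ is again normal with spectrum $f(\spec(a))\subseteq f(G)$, so the map indeed lands in $\Lc_{\C}$ (in fact in $\Lc_{f(G)}$), which is needed for the statement to typecheck.
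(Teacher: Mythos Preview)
The paper does not give its own proof of this theorem: it simply records it as ``a special case of Theorem~4.7 Chapter~II in \cite{takesaki}'' and moves on. Your proposal is therefore strictly more than what the paper does, and your self-contained Stone--Weierstrass\,/\,$\varepsilon/3$ sketch is correct and is in fact the standard argument behind the cited result.

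One small sharpening of your step~(1): the reference you give via Proposition~\ref{ballcwot} and \cite[Ch.~II, Prop.~4.1]{takesaki} treats $\SOT$-continuity of the adjoint on the \emph{unitary} group, whereas here you need it on $\Lc_G$, i.e.\ on uniformly bounded \emph{normal} operators. The extension is immediate but worth stating: for normal $a$ one has $\|a^*\xi\|=\|a\xi\|$, and since $a_\lambda\to a$ in $\SOT$ implies $a_\lambda^*\to a^*$ in $\WOT$, the identity
\[
\|a_\lambda^*\xi-a^*\xi\|^2=\|a_\lambda\xi\|^2-2\,\mathrm{Re}\,\langle a_\lambda^*\xi,a^*\xi\rangle+\|a\xi\|^2
\]
shows $a_\lambda^*\xi\to a^*\xi$. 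With this in hand, your polynomial step goes through on bounded sets exactly as you describe, and the rest is routine.
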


\begin{prop}\label{contendsot}
For $0\leq r<\pi/2$, $t\in\R$ and $w\in U$ let the map $B_\infty[w,r]\times B_\infty[w,r]\to U$ be defined by 
$$(u,v)\mapsto \gamma_{u,v}(t)=u\exp(t\log(u^{-1}v)).$$
Then this map is $\SOT$ continuous.
\end{prop}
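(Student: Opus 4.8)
The plan is to reduce the $\SOT$-continuity of $(u,v)\mapsto \gamma_{u,v}(t)$ to the $\SOT$-continuity of three building blocks: the logarithm on an appropriate spectral region, multiplication, and the exponential. First I would use left translation: writing $u = w u'$, $v = w v'$ with $u', v' \in B_\infty[\id,r]$, and noting that $\SOT$-convergence is preserved under left multiplication by the fixed unitary $w$ and under the continuous map $(u',v')\mapsto wu'$, it suffices to treat the case $w = \id$, i.e.\ to show $(u,v)\mapsto u\exp(t\log(u^{-1}v))$ is $\SOT$-continuous on $B_\infty[\id,r]\times B_\infty[\id,r]$.

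Next I would decompose the map as a composition. Given $u,v\in B_\infty[\id,r]$ with $r<\pi/2$, the triangle inequality (Proposition \ref{basicof}, part (1), via $d_\infty(\id,u^{-1}v)\le 2r<\pi$) gives $u^{-1}v\in B_\infty[\id,2r]=\{a\in U:\spec(a)\subseteq\exp(i[-2r,2r])\}$. Since multiplication and the adjoint are $\SOT$-continuous on $U$ (Proposition 4.1, Chapter II of \cite{takesaki}, already invoked in the proof of Proposition \ref{ballcwot}), the map $(u,v)\mapsto u^{-1}v = u^*v$ is $\SOT$-continuous and lands in $\Lc_G$ for $G=\exp(i[-2r,2r])$. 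On $G$ the principal branch of $\log$ is a continuous $\C$-valued function (as $2r<\pi$ keeps us away from the cut at $-1$), so by Theorem \ref{softfunc} the map $a\mapsto \log(a)$ is $\SOT$-continuous from $\Lc_G$ into the normal operators. Then $b\mapsto tb$ is trivially $\SOT$-continuous, and $\exp = (z\mapsto e^z)$ restricted to $t\log(u^{-1}v)$, whose spectrum lies in the compact set $t\cdot i[-2r,2r]$, is again $\SOT$-continuous by Theorem \ref{softfunc}. Finally, multiplying on the left by $u$ (which, together with $u_n\to u$ in $\SOT$, composes $\SOT$-continuously) yields $\gamma_{u,v}(t)$. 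Composing these steps gives the claim.

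The one technical point that needs care is that $\SOT$-convergence is only jointly continuous for multiplication on \emph{bounded} sets, but here all operators in sight are unitaries, hence uniformly bounded by $1$, so this causes no difficulty; I would state this explicitly. A second small point: Theorem \ref{softfunc} is stated for a single fixed compact $G$, so I must make sure that along a convergent net/sequence $(u_n,v_n)\to(u,v)$ in $B_\infty[\id,r]\times B_\infty[\id,r]$ the arguments $u_n^{-1}v_n$ all lie in the \emph{same} $\Lc_G$ with $G=\exp(i[-2r,2r])$ — which is exactly what the triangle-inequality bound $d_\infty(\id,u_n^{-1}v_n)\le 2r$ guarantees, uniformly in $n$ — and likewise that $t\log(u_n^{-1}v_n)$ all have spectrum in the fixed compact set $it[-2r,2r]$.

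I do not expect a genuine obstacle here: the proposition is essentially a formal consequence of Theorem \ref{softfunc} plus $\SOT$-continuity of the algebraic operations on the unitary group, in close parallel to the $d_\infty$-continuity statement of Proposition \ref{contendf} (where the functional calculus continuity was obtained by polynomial approximation in the norm topology). The mild subtlety is purely bookkeeping: keeping the spectral sets fixed and compact along the convergence and invoking boundedness of $U$ to get joint continuity of multiplication. Accordingly I would write the proof as: reduce to $w=\id$ by translation; observe $u^{-1}v\in\Lc_{\exp(i[-2r,2r])}$ by the triangle inequality; apply Theorem \ref{softfunc} to $\log$ and then to $\exp$; and compose with $\SOT$-continuous multiplication, using that everything takes place among unitaries.
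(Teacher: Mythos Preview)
Your proposal is correct and follows essentially the same route as the paper: reduce to $w=\id$, use $\SOT$-continuity of $(u,v)\mapsto u^{-1}v$ on unitaries, confine $u^{-1}v$ to the fixed arc $\exp(i[-2r,2r])$ via the triangle inequality, invoke Theorem~\ref{softfunc}, and left-multiply by $u$. The only cosmetic difference is that the paper applies Theorem~\ref{softfunc} once to the single continuous function $\alpha\mapsto\exp(t\log\alpha)$ on that arc, whereas you apply it twice (to $\log$ and then to $\exp$); both are equally valid.
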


\begin{proof}
Fix a $t\in \R$ and assume $w=\id$ since left multiplication is isometric and $\SOT$-continuous. Let $S_r=\{\exp(si)\in \C:-r\leq s\leq r\}$ and let $I_r=\{si \in\C:-r\leq s\leq r\}$. Then for $0\leq r\leq \pi$ we have 
$$\Lc_{S_r}=B_\infty[\id,r]\mbox{  and  }\Lc_{I_r}=\{x\in \B(\Hi)_{sa}:\|x\|_\infty\leq r\}.$$
Let $(u_n)_n\subseteq B_\infty[\id,r]$ such that $u_n\to u\in B_\infty[\id,r]$ in $\SOT$ and let $(v_n)_n\subseteq B_\infty[\id,r]$ such that $v_n\to u\in B_\infty[\id,r]$ in $\SOT$. We have $u_n^{-1}v_n\to u^{-1}v$ in $\SOT$. Since $u_n^{-1}v_n\in B_\infty[\id,r_1+r_2]=\Lc_{S_{r_1+r_2}}$ Theorem \ref{softfunc} with the function 
$$f:\Lc_{S_{r_1+r_2}}\to\C,\qquad \alpha\mapsto\exp(t\log(\alpha))$$ 
asserts that $\exp(t\log(u_n^{-1}v_n))\to\exp(t\log(u^{-1}v))$ in $\SOT$. Hence 
$$\gamma_{u_n,v_n}(t)=u_n\exp(t\log(u_n^{-1}v_n))\to u\exp(t\log(u^{-1}v))=\gamma_{u,v}(t)$$ in $\SOT$ and the proposition is proved.
\end{proof}

\begin{thm}\label{convfull}
For $0\leq r< \pi/2$ let $u,v,w\in U$ satisfy $u,v\in B_\infty[w,r]$, and let $\gamma_{u,v}$ be the geodesic joining $u$ and $v$. Then
$$f(t)=d_\infty(\gamma_{u,v}(t),w)$$
is a convex function. If $u,v\in B_\infty[w,\pi/2]$ satisfy $d_\infty(u,v)<\pi$ then the the same conclusion holds.  
\end{thm}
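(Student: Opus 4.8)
The plan is to reduce the convexity of $f(t)=d_\infty(\gamma_{u,v}(t),w)$ in the full unitary group $U$ to the convexity already known in $U_c$ (Proposition~\ref{geoconv} and Theorem~\ref{conv}), using strong-operator-topology approximation. First I would normalize by left translation so that $w=\id$; this is legitimate since $d_\infty$ is left-invariant and the geodesic $\gamma_{u,v}$ transforms equivariantly under left translation. So it suffices to show that $t\mapsto d_\infty(\id,\gamma_{u,v}(t))$ is convex when $u,v\in B_\infty[\id,r]$ with $r<\pi/2$ (and then handle the boundary case $r=\pi/2$, $d_\infty(u,v)<\pi$ by the same limiting trick used at the end of Proposition~\ref{geoconv}).

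The key step is a $\SOT$-approximation of $u$ and $v$ by elements of $U_c$. Writing $u=e^x$, $v=e^y$ with $x,y\in\B(\Hi)_{ah}$, $\|x\|_\infty,\|y\|_\infty\le r$, I would fix an increasing sequence of finite-rank projections $p_n\nearrow\id$ (strongly) commuting suitably, or more simply spectrally truncate: set $x_n=p_n x p_n$ — but to stay skew-adjoint and norm-bounded one should instead use the standard fact that there are $x_n,y_n\in\K_{ah}$ with $\|x_n\|_\infty\le r$, $\|y_n\|_\infty\le r$ and $x_n\to x$, $y_n\to y$ in $\SOT$ (e.g.\ compress with an increasing approximate identity of finite-rank projections from a common maximal abelian subalgebra containing each operator in turn, or diagonalize and truncate). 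Then $u_n=e^{x_n},v_n=e^{y_n}\in U_c$ with $u_n,v_n\in B_\infty[\id,r]$, and by $\SOT$-continuity of the exponential on bounded skew-adjoint sets (a case of Theorem~\ref{softfunc}) we get $u_n\to u$, $v_n\to v$ in $\SOT$. By Proposition~\ref{contendsot} the map $(a,b)\mapsto\gamma_{a,b}(t)$ is $\SOT$-continuous on $B_\infty[\id,r]\times B_\infty[\id,r]$, so $\gamma_{u_n,v_n}(t)\to\gamma_{u,v}(t)$ in $\SOT$ for each fixed $t$.

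Now for each $n$, Proposition~\ref{geoconv} and Theorem~\ref{conv} (applied in $U_c$, with $\beta=\gamma_{u_n,v_n}$, which stays in $B_\infty[\id,r]\subseteq B_\infty(\id,\pi/2)$) give that $f_n(t)=d_\infty(\id,\gamma_{u_n,v_n}(t))$ is convex on $[0,1]$. It remains to pass to the limit. Here is where I expect the main obstacle: $d_\infty$ is \emph{not} $\SOT$-continuous, only $\SOT$-lower-semicontinuous is too much to hope for directly either — but Proposition~\ref{ballcwot} tells us exactly that each closed ball $B_\infty[\id,\rho]$ is $\SOT$-closed, which is precisely $\SOT$-lower-semicontinuity of $d_\infty(\id,\cdot)$. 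Hence $f(t)=d_\infty(\id,\gamma_{u,v}(t))\le\liminf_n f_n(t)$ for each $t$. This inequality alone does not give convexity of $f$ from convexity of the $f_n$. To fix this I would instead argue pointwise along the three relevant points: fix $s\in(0,1)$ and $t_0=0,t_1=1$; we want $f(s)\le (1-s)f(0)+sf(1)$, i.e.\ $d_\infty(\id,\gamma_{u,v}(s))\le (1-s)d_\infty(u,\id)+s\,d_\infty(v,\id)$. We have $d_\infty(u_n,\id)\to d_\infty(u,\id)$ and $d_\infty(v_n,\id)\to d_\infty(v,\id)$ — this does hold, because on the $\SOT$-compact-by-$\|\cdot\|_\infty$-bounded set $\Lc_{S_r}=B_\infty[\id,r]$ the function $u\mapsto -i\log u$ is $\SOT$-continuous (Theorem~\ref{softfunc}) and $d_\infty(\id,u)=\|{-i\log u}\|_\infty$, but the operator norm is only $\SOT$-lsc; still, for our specific $u_n=e^{x_n}$ with $x_n\to x$ in $\SOT$ and $\|x_n\|_\infty\le\|x\|_\infty$ (truncation does not increase the norm), we get $d_\infty(\id,u_n)=\|x_n\|_\infty\le\|x\|_\infty=d_\infty(\id,u)$, and conversely $\|x\|_\infty\le\liminf\|x_n\|_\infty$, so $d_\infty(\id,u_n)\to d_\infty(\id,u)$, and similarly for $v_n$. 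Combining: for each $n$, convexity of $f_n$ gives $d_\infty(\id,\gamma_{u_n,v_n}(s))\le (1-s)d_\infty(\id,u_n)+s\,d_\infty(\id,v_n)$; letting $n\to\infty$, the right side converges to $(1-s)d_\infty(\id,u)+s\,d_\infty(\id,v)$, while the left side is bounded below by $d_\infty(\id,\gamma_{u,v}(s))$ via the $\SOT$-closedness of balls (Proposition~\ref{ballcwot}): indeed if $\gamma_{u_n,v_n}(s)\to\gamma_{u,v}(s)$ in $\SOT$ and $\gamma_{u_n,v_n}(s)\in B_\infty[\id,\rho_n]$ with $\rho_n\to\rho_\infty:=\liminf f_n(s)$, then $\gamma_{u,v}(s)\in B_\infty[\id,\rho_\infty]$, i.e.\ $f(s)\le\rho_\infty$. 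This yields $f(s)\le (1-s)f(0)+sf(1)$ for every $s\in(0,1)$. Since the same argument applies to any subinterval $[t_0,t_1]\subseteq[0,1]$ (restricting $\gamma_{u,v}$ and reparametrizing, noting $\gamma_{u,v}(t_0),\gamma_{u,v}(t_1)\in B_\infty[\id,r]$ by Proposition~\ref{geoconv}), $f$ is midpoint-convex on every subinterval, hence convex on $[0,1]$ by continuity of $f$ (which follows from continuity of $t\mapsto\gamma_{u,v}(t)$ in $d_\infty$ and of $d_\infty(\id,\cdot)$). Finally, the boundary case $u,v\in B_\infty[\id,\pi/2]$, $d_\infty(u,v)<\pi$ is obtained by the scaling approximation $u_m=e^{(1-1/m)x}$, $v_m=e^{(1-1/m)y}$ from $B_\infty[\id,(1-1/m)\pi/2]$, applying the case just proved and using $d_\infty$-continuity of geodesics in endpoints (Proposition~\ref{contendf}) together with convexity being preserved under pointwise limits of functions on a compact interval. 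The one genuinely delicate point throughout is that $d_\infty$ fails to be $\SOT$-continuous, so every limiting step must be routed through $\SOT$-closedness of the $d_\infty$-balls (Proposition~\ref{ballcwot}) on the "$\le$" side and through the explicit norm-non-increasing nature of our chosen approximants on the "$\ge$"/equality side.
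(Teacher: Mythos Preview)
Your proposal is correct and follows essentially the same route as the paper: reduce to $w=\id$, approximate $x,y$ by the finite-rank compressions $p_nxp_n,\,p_nyp_n$ (these \emph{are} skew-adjoint with norm $\le\|x\|_\infty,\|y\|_\infty$, so your hesitation there is unwarranted --- the paper uses exactly this), exponentiate into $U_c$, invoke Proposition~\ref{geoconv} and Theorem~\ref{conv} for each $n$, and pass to the limit via $\SOT$-continuity of geodesics (Proposition~\ref{contendsot}) and $\SOT$-closedness of balls (Proposition~\ref{ballcwot}). The one streamlining in the paper's limit step: rather than arguing that $d_\infty(\id,u_n)\to d_\infty(\id,u)$ at the endpoints, the paper simply fixes arbitrary $r_1,r_2<\pi/2$ with $u\in B_\infty[\id,r_1]$, $v\in B_\infty[\id,r_2]$, notes $u_n\in B_\infty[\id,r_1]$, $v_n\in B_\infty[\id,r_2]$ automatically, gets $\gamma_{u_n,v_n}(t)\in B_\infty[\id,(1-t)r_1+tr_2]$, and passes this \emph{fixed} $\SOT$-closed ball to the limit; taking $r_1=f(0)$, $r_2=f(1)$ at the end gives the convexity inequality without any discussion of endpoint convergence or lower semicontinuity. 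One small correction: when you bootstrap to subintervals you cite Proposition~\ref{geoconv} to place $\gamma_{u,v}(t_0),\gamma_{u,v}(t_1)$ in $B_\infty[\id,r]$, but that proposition is only for $U_c$; the correct justification is the inequality $f(t)\le(1-t)f(0)+tf(1)\le r$ you have just proved.
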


\begin{proof}

We assume that $w=\id$. For $r_1,r_2<\pi/2$ let $u\in B_\infty[\id,r_1]$ and $v\in B_\infty[\id,r_2]$. Since $d_\infty(\id,u),d_\infty(\id,v)<\pi/2$, by bi-invariance of the metric and by the triangle inequality we get $d_\infty(u,v)<\pi$. So there is a geodesic $\gamma_{u,v}(t)=u\exp(t\log(u^{-1}v))$ joining $u$ and $v$. We want to show that 
$$\gamma_{u,v}(t)\in B_\infty[\id,(1-t)r_1+tr_2]\mbox{  for  }t\in[0,1].$$ 
We write $u=e^x$ and $v=e^y$ with $x,y\in\B(\Hi)_{sa}$ such that $\|x\|_\infty\leq r_1$ and $\|y\|_\infty\leq r_2$. Let $(\xi_n)_n$ be an orthonormal basis of $\Hi$ and let $p_n$ be the projection onto $\spa\{\xi_1,\dots,\xi_n\}$. We define $x_n=p_nxp_n$ and $y_n=p_nyp_n$.  Since $p_n\to\id$ in $\SOT$ we conclude that $x_n\to x$ and $y_n\to y$ in $\SOT$. 

Since $x_n$ and $y_n$ are skew-adjoint we define the unitaries $u_n=e^{x_n}$ and $v_n=e^{y_n}$. Since $\|x_n\|_\infty\leq\|x\|_\infty\leq  r_1$ and $\|y_n\|_\infty\leq\|y\|_\infty\leq r_2$ for all $n\in \N$ we conclude that $(x_n)_n, x$ are in $\Lc_{I_{r_1}}$ and $(y_n)_n,y$ are in $\Lc_{I_{r_2}}$. Therefore we can apply Theorem \ref{softfunc} to these sets with the function $f=\exp$ to conclude that $u_n\to u$ and $v_n\to v$ in $\SOT$. 

For $n\in\N$ note that $u_n\in B_\infty[\id,r_1]$ and $v_n\in B_\infty[\id,r_2]$. These are unitaries which are the identity on $\spa\{\xi_1,\dots,\xi_n\}^\perp$, hence they are elements of $U_c$. By Proposition \ref{geoconv} we have $\gamma_{u_n,v_n}\subseteq B_\infty[\id,\max\{r_1,r_2\}]$. Hence Theorem \ref{conv} states that $\gamma_{u_n,v_n}(t)\in B_\infty[\id,(1-t)r_1+tr_2]$. 

Observe that $B_\infty[\id,(1-t)r_1+tr_2]=\{u\in U:u+u^*\geq 2\cos((1-t)r_1+tr_2)\}$, which by Proposition \ref{ballcwot} is $\SOT$ closed. Proposition \ref{contendsot} states that 
$$u_n\exp(t\log(u_n^{-1}v_n))\to u\exp(t\log(u^{-1}v))$$ 
in $\SOT$. Hence $\gamma_{u,v}(t)=u\exp(t\log(u^{-1}v))\in B_\infty[\id,(1-t)r_1+tr_2]$, which is what we wanted to prove.

To prove the last statement assume that $u,v\in U$ with $d_\infty(u,v)<\pi$, $d_\infty(u,\id)=r_1\leq\pi/2$ and $d_\infty(v,\id)=r_2\leq\pi/2$. We want to show that for $t\in [0,1]$ we have $d_\infty(\gamma_{u,v}(t),\id)\leq (1-t)r_1+tr_2$. We can chose sequences $u_n\to u$ and $v_n\to v$ such that $d_\infty(u_n,v_n)<\pi$ for sufficiently large $n\in\N$ and such that $d_\infty(u_n,\id)<r_1\leq\pi/2$ and $d_\infty(v_n,\id)<r_2\leq\pi/2$ for all $n\in \N$. This can be done as follows, if $u=e^x$ and $v=e^y$ we take $u_n=e^{(1-\frac{1}{n})x}$ and $v_n=e^{(1-\frac{1}{n})y}$.  By the first statement of the theorem $\gamma_{u_n,v_n}(t)\in B_\infty[\id,(1-t)r_1+tr_2]$. Since the geodesics depend continuously on the endpoints in the $d_\infty$ metric we see that in the limit $\gamma_{u,v}(t)\in B_\infty[\id,(1-t)r_1+tr_2]$, which is what we wanted to prove.
\end{proof}

\begin{cor}\label{tita}
Let $u\in U$ and $x\in\B(\Hi)_{ah}$ such that $\spec(ue^{tx})\in S^1 \backslash\{-1\}$ for $t\in (0,l)$. Define  
$$\theta_{\max}(t)=\max\spec(-i\log(ue^{tx}))\mbox{   and   }\theta_{\min}(t)=\min\spec(-i\log(ue^{tx})).$$ 
If for an open interval $I\subseteq (0,l)$ we have $\theta_{\max}(t)-\theta_{\min}(t)<\pi$ for $t\in I$, then in $I$ the function $\theta_{\max}$ is convex and the function $\theta_{\min}$ is concave. 
\end{cor}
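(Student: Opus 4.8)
\emph{The approach.} The plan is to read off $\theta_{\max}$ and $\theta_{\min}$, locally along the curve $t\mapsto ue^{tx}$, as affine shifts of the $d_\infty$-distance to suitable \emph{scalar} unitaries $e^{ic}\id$, and then to invoke the convexity of such distance functions furnished by Theorem~\ref{convfull}. First I would dispose of the trivial case $x=0$ and record that $\theta_{\max}$ and $\theta_{\min}$ are continuous on $I$: the map $t\mapsto ue^{tx}$ is norm-continuous, the spectrum is outer continuous, and on each compact subinterval of $I$ the union of the spectra is a compact subset of $S^1\setminus\{-1\}$, on which $z\mapsto -i\log z$ is continuous. Next I would make two reductions. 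It suffices to show that $\theta_{\max}$ is convex in a neighbourhood of every point $t_0\in I$, since a function convex near each point of an interval is convex on the whole interval. And the concavity of $\theta_{\min}$ follows from the convexity of $\theta_{\max}$: writing $(ue^{tx})^{-1}=u^{-1}e^{tx'}$ with $x'=-uxu^{-1}\in\B(\Hi)_{ah}$, one has $\spec((ue^{tx})^{-1})=\overline{\spec(ue^{tx})}$, hence $\theta_{\max}(u^{-1}e^{tx'})=-\theta_{\min}(ue^{tx})$ while the gap $\theta_{\max}-\theta_{\min}$ is unchanged, so applying the convexity statement to $(u^{-1},x')$ yields the concavity of $\theta_{\min}$.

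\emph{The distance lemma.} The crux will be the following elementary observation. If $v\in U$ has $\spec(v)\subseteq S^1\setminus\{-1\}$, $\theta_{\max}(v)-\theta_{\min}(v)<\pi$, $c\le\frac12\big(\theta_{\min}(v)+\theta_{\max}(v)\big)$ and $\theta_{\max}(v)-c<\pi$, then $d_\infty\big(v,e^{ic}\id\big)=\theta_{\max}(v)-c$. I would prove this by rotating: $d_\infty(v,e^{ic}\id)=d_\infty(\id,e^{-ic}v)$, the spectrum of $e^{-ic}v$ lies in a closed arc with extreme arguments $\theta_{\min}(v)-c$ and $\theta_{\max}(v)-c$, and the hypotheses force that arc into $(-\pi,\pi)$; then Proposition~\ref{basicof}(1) gives $d_\infty(\id,e^{-ic}v)=\max\{|\theta_{\min}(v)-c|,|\theta_{\max}(v)-c|\}$, and the condition $c\le\frac12(\theta_{\min}(v)+\theta_{\max}(v))$ makes $\theta_{\max}(v)-c$ the larger of the two.

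\emph{The local argument.} Fix $t_0\in I$. For small $\delta>0$ with $[a,b]:=[t_0-\delta,t_0+\delta]\subseteq I$, set $\underline c(\delta)=\max\{\theta_{\max}(a),\theta_{\max}(b)\}-\pi/2$ and $\bar m(\delta)=\min_{t\in[a,b]}\frac12(\theta_{\min}(t)+\theta_{\max}(t))$. By continuity, $\underline c(\delta)-\bar m(\delta)\to\frac12(\theta_{\max}(t_0)-\theta_{\min}(t_0))-\pi/2<0$ as $\delta\to 0$, where the strict hypothesis is used. Hence I would fix $\delta>0$ small enough that $2\delta\|x\|_\infty<\pi$, that $\underline c(\delta)\le\bar m(\delta)$, and that for some (equivalently, every) $c\in[\underline c(\delta),\bar m(\delta)]$ and all $t\in[a,b]$ one has $\theta_{\max}(t)-c<\pi$ with $[\theta_{\min}(t)-c,\theta_{\max}(t)-c]\subseteq(-\pi,\pi)$ — legitimate for small $\delta$ since all the quantities converge to their values at $t_0$, where $\theta_{\max}(t_0)-c$ is close to $\pi/2$. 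Pick such a $c$, put $w=e^{ic}\id$, $u_a=ue^{ax}$, $u_b=ue^{bx}$. Because $\|(b-a)x\|_\infty=2\delta\|x\|_\infty<\pi$, one has $u_a^{-1}u_b=e^{(b-a)x}$ with $\log e^{(b-a)x}=(b-a)x$, so $d_\infty(u_a,u_b)=2\delta\|x\|_\infty<\pi$ and the geodesic of Definition~\ref{defgeod} joining them is $\gamma_{u_a,u_b}(s)=ue^{(a+s(b-a))x}$. By the distance lemma, $d_\infty(ue^{tx},w)=\theta_{\max}(t)-c$ for all $t\in[a,b]$; in particular $d_\infty(u_a,w)\le\pi/2$ and $d_\infty(u_b,w)\le\pi/2$, so $u_a,u_b\in B_\infty[w,\pi/2]$. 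Then Theorem~\ref{convfull} (in the form that allows distance exactly $\pi/2$) gives that $s\mapsto d_\infty(\gamma_{u_a,u_b}(s),w)$ is convex, i.e., after reparametrization, that $t\mapsto\theta_{\max}(t)-c$ is convex on $[a,b]$, hence so is $\theta_{\max}$. Letting $t_0$ range over $I$ and invoking the two reductions completes the proof.

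\emph{Main obstacle.} The delicate point will be the choice of the single scalar $e^{ic}\id$: one needs it so that the \emph{symmetric} quantity $d_\infty(ue^{tx},e^{ic}\id)$ coincides with the \emph{one-sided} quantity $\theta_{\max}(t)$ throughout an entire subinterval while both endpoints remain within $d_\infty$-distance $\pi/2$ of it. This is exactly where the strict gap hypothesis $\theta_{\max}-\theta_{\min}<\pi$ is consumed (it provides the slack $\underline c(\delta)<\bar m(\delta)$ for small $\delta$), and one must simultaneously keep the relevant arcs from wrapping past $-1$.
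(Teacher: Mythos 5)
Your proposal is correct and takes essentially the same route as the paper: both shift by a scalar so that, on a small interval around each $t_0$, $\theta_{\max}(t)-c$ coincides with the $d_\infty$-distance from the geodesic $t\mapsto ue^{tx}$ to a fixed point at distance at most $\pi/2$ (you measure to $e^{ic}\id$, the paper rotates the curve and measures to $\id$), then invoke the convexity of that distance (Theorem \ref{convfull}) and the locality of convexity. The only differences are cosmetic, e.g.\ you obtain the concavity of $\theta_{\min}$ via the inverse curve while the paper runs the "analogous argument" with a different shift.
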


\begin{proof}
Let $I$ be an interval as in the the statement of the corollary. If $t_0\in I$ then there is a $c\in \R$ and an open interval $J\subseteq I$ with $t_0\in J$ such that 
$$-\pi/2<\theta_{\min}(t)+c\leq \theta_{\max}(t)+c<\pi/2$$ 
and $\theta_{\max}(t)+c>\vert \theta_{\min}(t)+c \vert$ for $t\in J$. The first inequalities imply that the geodesic $\beta(t)=e^{ic\id}ue^{tx}$ lies in $B_\infty(\id,\pi/2)$ for $t\in J$ and the second inequality implies that 
$$\theta_{\max}(t)+c=d_\infty(\id,e^{ic\id}ue^{tx})$$
for $t\in J$. By Theorem \ref{conv} $\theta_{\max}(t)+c$ is convex in $J$. Since convexity is a local property we conclude that $\theta_{\max}$ is convex in $I$. An analogous argument implies the concavity of $\theta_{\min}$ in $I$. 
\end{proof}

The $\pi$ bound for the difference between the maximum and minimum eigenvalue is optimal as the following example shows.

\begin{ex}\label{ejemplomatrices}
Consider the following curve in $U(\C^2)$:  
$$ue^{tx}=
 \left(\begin{array}{cc}
e^{i\theta} & 0 \\
\\
0 & e^{-i\theta}
\end{array}\right)
\left(\begin{array}{cc}
\cos(t) & \sin(t) \\
\\
-\sin(t) & \cos(t)
\end{array}\right).
$$
The eigenvalues of $ue^{tx}$ are
$$\cos(\theta)\cos(t)\pm i\sqrt{1-(\cos(\theta)\cos(t))^2}.$$
Hence the eigenvalues of $\log(ue^{tx})$ are $\pm i\arccos(\cos(\theta)\cos(t))$. If we set $$f(t)=\arccos(\cos(\theta)\cos(t))$$ 
some calculations yield 
$$f''(0)=\cot(\theta)$$ 
for $\theta\neq 0$ and $-\pi<\theta<\pi$. Taking $\theta=\pi/2+\epsilon$ and $t\in(-\epsilon,\epsilon)$ we see that the conclusion of Corollary \ref{tita} does not hold for small $\epsilon>0$.
\end{ex}

Symmetries are unitaries $u$ such that $u^*=u$, that is, $\spec(u)\subseteq \{1,-1\}$. Their geometry was studied in \cite{portarecht} and subsequent articles, see \cite{esteban} and the reference therein. The following corollary can be proved with elementary techniques.

\begin{cor}\label{geoconvref}
If $u,v$ are symmetries such that $d_\infty(u,v)<\pi$, then the unique geodesic $\gamma_{u,v}$ consists of symmetries. 
If $\gamma_{u,v}(t)=ue^{tx}$ with $\|x\|_\infty<\pi$ then $\gamma_{u,v}(t)=e^{-\frac{1}{2} tx}ue^{\frac{1}{2} tx}$.  
\end{cor}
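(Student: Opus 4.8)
The plan is to reduce the statement about symmetries to a calculation inside the commutative geometry of a single rotation, exploiting that for symmetries $u,v$ the product $uv^{-1}=uv$ has a very rigid structure. First I would set $s=uv$; since $u,v$ are symmetries (selfadjoint unitaries) one has $s^{-1}=v^{-1}u^{-1}=vu=(uv)^* $, which is in general not $s$ itself, but the key relation is $usu=u(uv)u=vu=s^{-1}$, i.e. $u$ conjugates $s$ to $s^{-1}$. Writing $s=e^{z}$ with $z\in\B(\Hi)_{ah}$, $\|z\|_\infty<\pi$ (possible since $d_\infty(u,v)<\pi$ gives $\|\id-s\|_\infty<2$ by Proposition \ref{basicof}, and then the principal logarithm is available), the relation $usu=s^{-1}=e^{-z}$ forces $uzu=-z$ by uniqueness of the logarithm on the relevant spectral set. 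Set $x=uz$ (equivalently $z=u^{-1}x=ux$); then $\gamma_{u,v}(t)=u\exp(t\log(u^{-1}v))$, and since $u^{-1}v=uv$ wait — I must be careful: $u^{-1}v=uv$ holds because $u^{-1}=u$; but $uv=s$, so $\log(u^{-1}v)=\log(s)=z$ and $\gamma_{u,v}(t)=ue^{tz}$. This matches the stated form with $x:=z$, and I would record $\|x\|_\infty=\|z\|_\infty<\pi$.

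Next I would verify the two claims. For the second (explicit) formula, using $uzu=-z$ one computes $ue^{tz}=u e^{tz} = (ue^{tz}u)u = e^{-tz}\,u$ only after conjugating, so more precisely $u e^{\frac12 tz} = e^{-\frac12 tz} u$ rearranges to $u = e^{-\frac12 tz} u e^{-\frac12 tz}$, hence $u e^{tz} = e^{-\frac12 tz} u e^{-\frac12 tz} e^{tz}= e^{-\frac12 tz} u e^{\frac12 tz}$. Thus $\gamma_{u,v}(t)=e^{-\frac12 tz} u e^{\frac12 tz}$, which is manifestly a conjugate of the symmetry $u$ by a unitary, hence a symmetry; this simultaneously proves that every point of $\gamma_{u,v}$ is a symmetry. (If one prefers, $(\gamma_{u,v}(t))^*=(ue^{tz})^*=e^{-tz}u^*=e^{-tz}u=u e^{tz}=\gamma_{u,v}(t)$, using $u^*=u$ and $ue^{tz}u=e^{-tz}$ once more.) Uniqueness of the geodesic when $d_\infty(u,v)<\pi$ is exactly Proposition \ref{minimalf} together with Definition \ref{defgeod}, so no extra work is needed there.

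The one step that requires genuine care is the passage from $u e^{z} u = e^{-z}$ to $u z u = -z$: this is the assertion that the principal logarithm intertwines the conjugation. The clean way is to note $u(\cdot)u$ is a $*$-automorphism of $\B(\Hi)$ fixing $\id$, hence it commutes with continuous functional calculus; so $u e^{z} u = e^{u z u}$, and since $\|z\|_\infty<\pi$ both $z$ and $-z$ lie in the domain where $\exp$ is injective onto its image (the spectra of $z$ and $uzu$ both sit in $i(-\pi,\pi)$), giving $uzu=-z$. I do not expect any obstacle beyond this bookkeeping; everything else is a two-line manipulation, which is why the corollary is advertised as provable "with elementary techniques."
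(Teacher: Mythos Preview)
Your argument is correct but follows a different route from the paper's. The paper proves the first assertion by invoking its main convexity result, Theorem~\ref{convfull}: it observes that the set of symmetries is exactly the intersection $B_\infty[i\id,\pi/2]\cap B_\infty[-i\id,\pi/2]$, and since $u,v$ lie in both balls and $d_\infty(u,v)<\pi$, convexity forces $\gamma_{u,v}$ to stay in both, hence in the set of symmetries. Only after this does the paper derive the anticommutation $ux=-xu$, by differentiating the identity $ue^{tx}=(ue^{tx})^*=e^{-tx}u$ at $t=0$, and from that obtains the conjugation formula.

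You proceed in the opposite order and more elementarily: you establish $uxu=-x$ directly from $u e^x u=e^{-x}$ via the injectivity of $\exp$ on $\{z\in\B(\Hi)_{ah}:\|z\|_\infty<\pi\}$ and the compatibility of functional calculus with $*$-automorphisms, then deduce the conjugation formula $ue^{tx}=e^{-\frac{1}{2}tx}ue^{\frac{1}{2}tx}$, which immediately shows every point on the geodesic is a symmetry. Your route is self-contained and does not use Theorem~\ref{convfull} at all, matching the paper's own remark that the corollary ``can be proved with elementary techniques''; the paper's route, by contrast, illustrates the convexity theorem in action. One small expository wobble: you momentarily set $x=uz$ before correcting to $x=z$; only the latter is what you use, and it is the right identification.
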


\begin{proof}
We note that $B_\infty[i\id,\pi/2]=\{u\in U:\spec(u)\subseteq \exp(i[0,\pi])\}$ and $B_\infty[-i\id,\pi/2]=\{u\in U:\spec(u)\subseteq \exp(i[-\pi,0])\}$. Hence 
$$B_\infty[i\id,\pi/2]\cap B_\infty[-i\id,\pi/2]=\{u\in U:\spec(u)\subseteq \{1,-1\}\}$$
is the space of symmetries. Since by Theorem \ref{convfull} $\gamma_{u,v} \subseteq B_\infty[i\id,\pi/2]$ and $\gamma_{u,v}\subseteq B_\infty[-i\id,\pi/2]$ the first assertion follows.
The geodesic $\gamma_{u,v}(t)=ue^{tx}$ consists of symmetries, therefore
$$ue^{tx}=(ue^{tx})^*=e^{-tx}u^*=e^{-tx}u.$$
If we differentiate with respect to $t$ and set $t=0$ we get $ux=-xu$. Hence $e^{-\frac{1}{2} tx}u=ue^{\frac{1}{2} tx}$, from which the second assertion follows. 
\end{proof}

Using Lemma \ref{bolpos} we can reformulate Theorem \ref{convfull}.

\begin{cor}
Let $r_1,r_2\leq \pi/2$, $v$ a unitary operator, and $x$ a skew-adjoint operator such that $\|x\|_\infty<\pi$, $v+v^{-1}\geq 2\cos(r_1)\id$ and $(ve^x)+(ve^x)^{-1}\geq 2\cos(r_2)\id$. Then 
$$(ve^{tx})+(ve^{tx})^{-1}\geq 2\cos((1-t)r_1+tr_2)\id$$
for all $t\in [0,1]$. 
\end{cor}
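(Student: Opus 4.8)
The plan is to translate the two operator inequalities into statements about the $d_\infty$ metric, apply Theorem \ref{convfull}, and then translate back, so that the whole argument is a bookkeeping exercise around Lemma \ref{bolpos}.

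First I would use Lemma \ref{bolpos}: since $r_1,r_2\le\pi/2\le\pi$ and $v^{-1}=v^*$, $(ve^x)^{-1}=(ve^x)^*$ for unitaries, the hypotheses $v+v^{-1}\ge 2\cos(r_1)\id$ and $(ve^x)+(ve^x)^{-1}\ge 2\cos(r_2)\id$ are exactly $v\in B_\infty[\id,r_1]$ and $ve^x\in B_\infty[\id,r_2]$. Next I would identify the relevant geodesic: because $\|x\|_\infty<\pi$, the spectrum of $e^x$ omits $-1$, so the principal logarithm satisfies $\log(v^{-1}(ve^x))=\log(e^x)=x$, and hence by Definition \ref{defgeod} the geodesic joining $v$ and $ve^x$ is $\gamma_{v,ve^x}(t)=v\exp(tx)=ve^{tx}$. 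Also $d_\infty(v,ve^x)=d_\infty(\id,e^x)=\|x\|_\infty<\pi$ by Proposition \ref{minimalf} (the curve $t\mapsto e^{tx}$ is minimal, of length $\|x\|_\infty$). This puts us inside the hypotheses of Theorem \ref{convfull} with $w=\id$: the case $r_1,r_2<\pi/2$ is covered by the first statement, and the borderline case where $r_1$ or $r_2$ equals $\pi/2$ by the last statement, which is exactly why we need $d_\infty(v,ve^x)<\pi$.

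Then Theorem \ref{convfull} gives that $f(t)=d_\infty(ve^{tx},\id)$ is convex on $[0,1]$, so $f(t)\le (1-t)f(0)+tf(1)\le (1-t)r_1+tr_2$; equivalently $ve^{tx}\in B_\infty[\id,(1-t)r_1+tr_2]$. Since $(1-t)r_1+tr_2\le\pi/2\le\pi$, applying Lemma \ref{bolpos} a second time converts this membership back into $(ve^{tx})+(ve^{tx})^{-1}\ge 2\cos((1-t)r_1+tr_2)\id$, which is the claim.

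I do not expect a serious obstacle here, as every step is a direct invocation of an earlier result; the only points that deserve a line of care are checking that the principal logarithm of $e^x$ really is $x$ when $\|x\|_\infty<\pi$ (so that the geodesic is genuinely $ve^{tx}$ and not some other branch) and making sure the correct alternative in Theorem \ref{convfull} is cited when $r_1$ or $r_2$ sits at the endpoint $\pi/2$.
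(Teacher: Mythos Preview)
Your proposal is correct and follows exactly the approach the paper intends: the corollary is stated in the paper as a direct reformulation of Theorem \ref{convfull} via Lemma \ref{bolpos}, and your proof supplies precisely those translation steps (hypotheses $\Rightarrow$ ball membership $\Rightarrow$ convexity of $f$ $\Rightarrow$ ball membership $\Rightarrow$ operator inequality), including the care with the principal logarithm and the endpoint case $r_i=\pi/2$.
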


With Theorem \ref{convfull} we can prove a sharper version of item (4) of Theorem \ref{basicvn}.

\begin{thm}\label{strictlp}
Let $p$ be an even positive number, $u, v, w\in U_{\M}$ such that $v,w\in B_\infty[u,r]$ with $0\leq r<\pi/2$. Let $\beta$ be the short geodesic joining $v$ to $w$ in $U_{\M}$. Then 
$$f(t)=d_p(u,\beta(t))^p$$
is a strictly convex function. 
\end{thm}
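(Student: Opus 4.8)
The plan is to reduce Theorem \ref{strictlp} to item (4) of Theorem \ref{basicvn} by using the new convexity result (Theorem \ref{convfull}) to control where the geodesic $\beta$ lives, and then to handle the degenerate cases (where a prolongation of $\beta$ passes through $u$) separately, exactly as was done in the proof of Theorem \ref{strict} for the Schatten case.

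First I would normalize by left translation, which is a $d_p$- and $d_\infty$-isometry, so that $u=\id$. Then the hypothesis becomes $v,w\in B_\infty[\id,r]$ with $r<\pi/2$. By Theorem \ref{convfull} applied with $w$ there equal to $\id$ here, the function $t\mapsto d_\infty(\beta(t),\id)$ is convex on $[0,1]$, hence $d_\infty(\beta(t),\id)\leq\max\{d_\infty(v,\id),d_\infty(w,\id)\}\leq r<\pi/2$ for all $t\in[0,1]$. So $\beta\subseteq B_\infty[\id,r]\subseteq B_\infty(\id,\pi/2)$. Writing $\beta(t)=\exp(w_t)$ with $w_t\in\M_{ah}$ and $\|w_t\|_\infty<\pi/2$ (the principal logarithm is available since $d_\infty(\beta(t),\id)<\pi/2<\pi$), I am in the situation of the proof of item (4) of Theorem \ref{basicvn}: the key local computation there, see Remark 3.7 of \cite{pschatten} and its analogue in \cite{finitemeasure}, gives strict convexity of $f(t)=d_p(\id,\beta(t))^p=\|w_t\|_p^p$ as long as the relevant eigenvalue bound $\|w_t\|_\infty<\pi/2$ holds and $u=\id$ does not lie on any prolongation of $\beta$, which is precisely what the $d_\infty$-ball condition buys us — this is why the $d_\infty$ formulation is strictly weaker than the original $\|u-v\|<\sqrt2$, $\|w-v\|<\sqrt2-\|u-v\|$ hypotheses.

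It then remains to treat the case where a prolongation of $\beta$ does pass through $\id$. As in the proof of Theorem \ref{strict}, in that case the prolonged geodesic has the form $\beta(t)=\exp(tx)$ for some $x\in\M_{ah}$, $x\neq 0$ (since $v\neq w$, as $\beta$ is non-constant — here I should note that if $v=w$ the statement is vacuous, and if $v\neq w$ but a prolongation through $\id$ exists the geodesic is genuinely non-constant), and along it $f(t)=\|x\|_p^p\,|t|^p$ on the relevant parameter range, which after the affine reparametrization taking $[0,1]$ to the subinterval corresponding to $\beta$ is of the form $t\mapsto c\,|at+b|^p$ with $c>0$. This is convex, and strictly convex except possibly at the single point where $at+b=0$ when $p\geq 4$; since a function that is convex and is strictly convex off a single point is strictly convex, the conclusion holds in this case too. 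I would then combine the two cases to finish.

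The main obstacle is not conceptual but a matter of correctly invoking the right lemma from \cite{pschatten} and \cite{finitemeasure}: I must make sure that the local strong-convexity estimate for $t\mapsto\|w_t\|_p^p$ used in the proof of item (4) of Theorem \ref{basicvn} is genuinely available under the single hypothesis $\|w_t\|_\infty<\pi/2$ together with "$\id$ not on a prolongation of $\beta$", rather than under the stronger norm hypotheses as literally stated in Theorem \ref{basicvn}(4); the analogue of Remark 3.7 of \cite{pschatten} is what makes this work, and the whole point of the theorem is to package the $d_\infty$-ball condition so that this remark applies. The degenerate-prolongation case is routine given the template already set in Theorem \ref{strict}. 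I expect no difficulty with the continuity or approximation aspects, since unlike Theorem \ref{convfull} no $\SOT$-approximation is needed here — the geodesic $\beta$ already lies in $U_{\M}$ and the whole argument takes place there.
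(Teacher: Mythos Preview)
Your overall strategy matches the paper's: normalize to $u=\id$, use Theorem \ref{convfull} to trap $\beta$ inside $B_\infty[\id,r]$, then invoke the known strict-convexity result, and finally treat the prolongation case separately exactly as in Theorem \ref{strict}. The difference lies precisely at the point you yourself flag as the main obstacle. You propose to bypass the literal hypotheses of Theorem \ref{basicvn}(4) by invoking an analogue of Remark 3.7 of \cite{pschatten} in the finite von Neumann setting, whose availability you are not certain of. The paper avoids this issue entirely: once $\|\id-\beta(t)\|_\infty\le 2\sin(r/2)<\sqrt{2}$ for all $t$, it fixes $t_0\in[0,1]$ and, by $\|\cdot\|_\infty$-continuity of $\beta$, chooses a small interval $I\ni t_0$ on which $\|\beta(t_0)-\beta(t)\|_\infty<\sqrt{2}-\|\id-\beta(t_0)\|_\infty$; then Theorem \ref{basicvn}(4) applies \emph{as stated} to the triple $(\id,\beta(t_0),\beta(t))$, giving strict convexity of $f$ on $I$. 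Since strict convexity is local, this yields the global conclusion. The paper's route thus uses only the statement of Theorem \ref{basicvn}(4), not its proof or any unstated remark, which removes exactly the uncertainty you identified. Your treatment of the degenerate prolongation case is the same as the paper's.
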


\begin{proof}
Assume that $u=\id$ and that a prolongation of $\beta$ does not include $u=\id$. The assumption is that $v,w\in B_\infty[\id,r]$, so Theorem \ref{convfull} implies that $\beta\subseteq B_\infty[u,r]$, that is, $d_\infty(\beta(t),\id)\leq r<$ for all $t\in [0,1]$. Let us write $\beta(t)=e^{x_t}$ with $x_t$ skew-adjoint and $\|x_t\|_\infty\leq r$.  Since by Proposition \ref{basicof} $\|1-e^x\|_\infty=2\sin(\frac{\|x\|_\infty}{2})$ we have 
$$\|\id-\beta(t)\|_\infty=2\sin\left(\frac{\|x_t\|_\infty}{2}\right)\leq 2\sin\left(\frac{r}{2}\right)<\sqrt{2}$$
for all $t\in [0,1]$. Given a $t_0 \in [0,1]$ choose a an interval $I\subseteq [0,1]$ such that $t_0\in I$ and which is small enough so that 
$$\|\beta(t_0)-\beta(t)\|_\infty<\sqrt{2}-\|\id-\beta(t)\|_\infty\leq\sqrt{2}-2\sin\left(\frac{r}{2}\right)<0$$  
for all $t\in I$. This can be done by the continuity of $t\mapsto \beta(t)$ in the $\|\cdot\|_\infty$ norm. Then Theorem \ref{basicvn} states that $d_p(u,\beta(t))^p$ is strictly convex for $t\in I$. Since strict convexity is a local property we get strict convexity for all $t\in [0,1]$.
If a prolongation of $\beta$ includes $u$ then the prolongation is of the form $\beta(t)=ue^{tx}$. In this case 
$$d_p(u,\beta(t))^p=\|x\|^pt^p,$$
so strict convexity also holds.
\end{proof}

\section{Strong convexity of the squared $d_2$ metrics}\label{slambdaconv}

In this section we prove strong convexity properties of the function $d_2(\cdot,u)^2$ in $d_\infty$ balls. A real valued function on an interval is strongly convex if there is a $\lambda>0$ such that one can touch the graph of the function from below by a translation of the parabola $y = \lambda x^2$. While smooth convex functions are characterized by the inequality $f''\geq 0$, for $\lambda$-convex functions this inequality turns into $f''\geq 2\lambda$, see Example 4.4.4. in \cite{metric}.

\begin{defn}\label{deflamconv}
A function $f:[0,l]\to \R$ is $\lambda$-convex if $f(t)-\lambda t^2$ is convex for a $\lambda>0$. In this case it is called strongly convex.
\end{defn}

We recall a few facts about the Hessian of $2$-norms which we will use in the proof of the next theorems.

\begin{rem}\label{hessiano}
The Hessian of the $p$-norms in the context of a $C^*$-algebra with a finite trace $\tau$ was studied in \cite{estebancoco,coco}. Let $x, y,z \in \M_{ah}$, let $H_x:\M_{ah}\times \M_{ah}\to \R$ stand for the symmetric bilinear form given by 
$$H_x(y,z) = -2\tau(yz).$$
Denote by $Q_x$ the quadratic form induced by $H_x$. Then (cf. Lemma 4.1 in \cite{estebancoco} and equation (3.1) in \cite{coco}):
\begin{enumerate}
\item $Q_x([y, x])\leq 4\|x\|_\infty^2Q_x(y)$.
\item $Q_x(y)=2\|y\|_2^2$.
\end{enumerate}
In particular $H_x$ is positive definite for any $x\in \M_{ah}$. 

By Remark 3.5 in \cite{pschatten} the same formulas hold if we consider $x,y,z\in B_2(\Hi)_{ah}$ and the symmetric bilinear form $H_x:B_2(\Hi)_{ah}\times B_2(\Hi)_{ah}\to \R$  given by $H_x(y,z) = -2\Tr(yz).$
\end{rem}

The following theorem is an adaptation of Theorem 3.6 in \cite{pschatten} for the case $p=2$, following Remark 3.7 in \cite{pschatten}.

\begin{thm}\label{strong}
Let $u$ be an element of the group $U_2$, let $0<r<\pi/2$, and let $\beta\subseteq B_\infty [u,r]$ be a non constant geodesic. Then the function 
$$f(t)=d_2(u,\beta(t))^2$$ 
satisfies $f''\geq 2\lambda$ for
$$\lambda =c^2\frac{\sin(2r)}{2r} >0,$$ 
where $c>0$ is the speed of the geodesic in the $d_2$ metric. In particular, if $\beta$ has unit speed then $f$ is $\lambda$-convex with 
$$0<\lambda =\frac{\sin(2r)}{2r}<1.$$
\end{thm}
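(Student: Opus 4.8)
The plan is to reduce the statement to a differential inequality for $f(t)=d_2(u,\beta(t))^2$ along the geodesic and to estimate the Hessian of the $d_2$-distance using the quadratic form $Q_x$ from Remark \ref{hessiano}. Since left translation is a $d_2$- and $d_\infty$-isometry I would first normalize $u=\id$, so that $f(t)=\|\log(\beta(t))\|_2^2$ because $d_2(\id,v)=\|\log v\|_2$ whenever $d_\infty(\id,v)<\pi$ (the exponential restricted to $\|x\|_\infty<\pi$ being a $d_2$-isometry onto its image, a fact that comes from the minimality of the curves $t\mapsto e^{tx}$ in Theorem \ref{geodesicasp}(1) for $p=2$). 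Write $\beta(t)=\exp(z(t))$ with $z(t)\in B_2(\Hi)_{ah}$ and $\|z(t)\|_\infty\le r$ for all $t$, which is exactly what the hypothesis $\beta\subseteq B_\infty[\id,r]$ together with Proposition \ref{basicof}(1) gives us; so $f(t)=\|z(t)\|_2^2=\tfrac12 Q_{z(t)}(z(t))$.

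Next I would compute $f''$. The cleanest route is the one indicated by Remark 3.7 in \cite{pschatten}: parametrize the geodesic as $\beta(t)=u_t$ with $\dot\beta(t)=u_t v$ for a fixed $v\in B_2(\Hi)_{ah}$ (in the translated picture $\beta(t)=\exp(z(t))$ with $z$ a straight line only if $u$ lies on the geodesic, so in general one must carry the general geodesic equation). Following the derivation in \cite{pschatten}, one obtains an expression of the form
$$f''(t) = 2c^2 - (\text{correction term involving } Q_{z(t)}([z(t),\cdot])),$$
where $c=\|v\|_2$ is the $d_2$-speed, and the correction is controlled by inequality (1) in Remark \ref{hessiano}, namely $Q_x([y,x])\le 4\|x\|_\infty^2 Q_x(y)$. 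Plugging in $\|z(t)\|_\infty\le r$ and using $2r<\pi$ so that the relevant trigonometric factor $\frac{\sin(2r)}{2r}$ is positive, the bound $f''\ge 2c^2\frac{\sin(2r)}{2r}$ should drop out; the function $\frac{\sin s}{s}$ appears because the second variation of $\exp$ at $z$ contributes factors of $\frac{\sin(\mathrm{ad}_z/2)}{\mathrm{ad}_z/2}$ whose operator norm on the relevant subspace is bounded below using $\|z\|_\infty\le r$. I would simply cite the computation in \cite[Theorem 3.6 and Remark 3.7]{pschatten} verbatim for $p=2$, since the only new input is that Theorem \ref{convfull}/Proposition \ref{geoconv}-type control has been replaced by the hypothesis $\beta\subseteq B_\infty[u,r]$, which directly furnishes the bound $\|z(t)\|_\infty\le r$ that the estimate needs.

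The last paragraph would dispose of the unit-speed specialization: if $c=1$ then $\lambda=\frac{\sin(2r)}{2r}$, and $0<\lambda<1$ because $\frac{\sin s}{s}$ is strictly decreasing on $(0,\pi)$ from its limit $1$ at $0$, while it stays positive on $(0,\pi)$ and here $2r\in(0,\pi)$; positivity of $\lambda$ also uses $r>0$ so that $\beta$ being non-constant is consistent with $\beta$ staying in a ball of positive radius.

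I expect the main obstacle to be bookkeeping the second-variation formula for $f(t)=d_2(\id,\beta(t))^2$ along a \emph{general} geodesic (not one emanating from $u$): one has to differentiate $\|\log\beta(t)\|_2^2$ twice, and $\log$ is only real-analytic, not polynomial, on $B_\infty[\id,r]$, so the derivatives of $\log$ bring in the $\frac{\sin(\mathrm{ad}/2)}{\mathrm{ad}/2}$ series and one must check it converges and is estimated correctly in the $\|\cdot\|_2$-norm using only the $\|\cdot\|_\infty$-bound on $z(t)$ — this is precisely the content of Remark \ref{hessiano}(1), so the crux is verifying that the abstract inequality there is exactly the one needed after unwinding the chain rule. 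The degenerate case where a prolongation of $\beta$ passes through $u$ is easy and handled separately: there $f(t)=c^2 t^2$ up to reparametrization, so $f''=2c^2\ge 2c^2\frac{\sin(2r)}{2r}$ trivially since $\frac{\sin(2r)}{2r}\le 1$.
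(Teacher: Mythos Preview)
Your high-level plan --- normalize $u=\id$, write $\beta(s)=e^{x_s}$ with $\|x_s\|_\infty\le r$, differentiate $f(s)=\|x_s\|_2^2$, and invoke the computation of \cite[Theorem~3.6, Remark~3.7]{pschatten} --- is exactly the paper's approach. However, your description of how the bound actually emerges is inaccurate in two places, and one of them is a genuine gap.

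First, the expression you guess for $f''$ is wrong in form. One does \emph{not} get $f''=2c^2-(\text{correction})$. Writing $\beta(s)=e^y e^{sz}$ and $x_s=\log\beta(s)$, the differential-of-exp formula $z=\int_0^1 e^{-tx_s}\dot x_s e^{tx_s}\,dt$ leads (after one more differentiation) to
\[
f''(s)=\int_0^1 H_{x_s}\bigl(\delta_s(0),\delta_s(t)\bigr)\,dt,\qquad \delta_s(t)=e^{-tx_s}\dot x_s e^{tx_s}.
\]
The curve $\delta_s$ lies on the sphere of radius $R_s=\sqrt{2}\,\|\dot x_s\|_2$ in $(B_2(\Hi)_{ah},H_{x_s})$, so the integrand is $R_s^2\cos\alpha_s(t)$; the commutator bound $Q_x([y,x])\le 4\|x\|_\infty^2 Q_x(y)$ controls the \emph{arc-length} of $\delta_s$ and hence the angle, giving $\alpha_s(t)\le 2t\|x_s\|_\infty$. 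Integrating $\cos(2t\|x_s\|_\infty)$ over $t\in[0,1]$ is what produces the factor $\sin(2\|x_s\|_\infty)/(2\|x_s\|_\infty)\ge\sin(2r)/(2r)$. So the trigonometric factor comes from a sphere/angle argument, not from a $\frac{\sin(\mathrm{ad}/2)}{\mathrm{ad}/2}$ operator-norm estimate as you suggest.

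Second --- and this is the real gap --- the inequality so far is $f''(s)\ge R_s^2\,\frac{\sin(2r)}{2r}$ with $R_s^2=2\|\dot x_s\|_2^2$, \emph{not} $2c^2$. To replace $R_s^2$ by $2c^2=2\|z\|_2^2$ you need the exponential metric decreasing property \cite[Lemma~3.3]{pschatten}, which from $e^{-x_s}d\exp_{x_s}(\dot x_s)=z$ gives $\|\dot x_s\|_2\ge\|z\|_2$. You never mention this step; without it the constant in your bound is the (uncontrolled) $\|\dot x_s\|_2$ rather than the speed $c$ of $\beta$, and the theorem does not follow. Your treatment of the degenerate case and of the unit-speed specialization is fine.
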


\begin{proof}
We assume that $u=\id$. Let $y,z\in\B_p(\Hi)_{ah}$ such that $\beta(s)=e^ve^{sz}$. Let $x_s=\log(e^ye^{sz})$, and $\gamma_s(t)=e^{tx_s}$. Since $d_\infty (\id,\beta(s))=\|x_s\|_\infty<\pi/2$ the curve $\gamma_s$ is a short geodesic in the $d_2$ metric joining $\id$ and $\beta(s)$ of length $d_2 (\id,\beta(s))=\|x_s\|_2$. Then $f(s)=\|x_s\|_2^2=-\Tr(x_s^2)$, hence
$$f'(s)=-2\Tr(x_s\dot{x}_s)=H_{x_s}(\dot{x}_s,x_s).$$
We denote with $H_a(\cdot,\cdot)$ the Hessian at $a\in\B_p(\Hi)_{ah}$ of the function $\|\cdot\|^2_2$, and $Q_a(\cdot)$ its associated quadratic form. 

Note that $\exp(x_s)=e^ye^{sz}$, so if we differentiate this expression at $s$ we get $d\exp_{x_s}(\dot{x}_s)=e^{-x_s}z$. We have the following formula for the exponential:  
$$e^{-x_s}d\exp_{x_s}(\dot{x}_s)=\int_0^1 e^{-tx_s}\dot{x}_s e^{tx_s} dt,$$
therefore 
$$z=\int_0^1 e^{-tx_s}\dot{x}_s e^{tx_s} dt.$$
Thus
$$\Tr(x_s\dot{x}_s)=\int_0^1 \Tr(x_s e^{-tx_s}\dot{x}_s e^{tx_s})dt=\Tr(zx_s).$$
Hence
$$f''(s)=-2\Tr(\dot{x}_s z)=H_{x_s}(\dot{x}_s,z),$$
and if we put $\delta_s(t)= e^{-tx_s}\dot{x}_s e^{tx_s}$, then
$$f''(s)=\int_0^1 -2\Tr(\delta_s(0)\delta_s(t))dt=\int_0^1 H_{x_s}(\delta_s(0),\delta_s(t))dt.$$
We define $R_s^2=Q_{x_s}(\dot{x}_s)=2\|\dot{x}_s\|_2^2>0$ and note that $\delta_s$ lies in the sphere of radius $R_s$ of the Hilbert space $\B_p(\Hi)_{ah}$ endowed with the inner product $H_{x_s}$. Hence
$$H_{x_s}(\delta_s(0),\delta_s(t))=R_s^2\cos(\alpha_s(t)),$$
where $\alpha_s(t)$ is the angle subtended by $\delta_s(0)$ and $\delta_s(t)$. If $L_0^t(\delta_s)$ is the length in the sphere of the curve $\delta_s$ from $\delta_s(0)$ to $\delta_s(t)$ then
\begin{align*}
R_s\alpha_s(t)&\leq L_0^t(\delta_s)=\int_0^tQ_{x_s}^\frac{1}{2}( e^{-tx_s}[x_s,\dot{x}_s] e^{tx_s})dt\\
&=\int_0^tQ_{x_s}^\frac{1}{2}([x_s,\dot{x}_s])dt=tQ_{x_s}^\frac{1}{2}([x_s,\dot{x}_s]).
\end{align*}
By Remark \ref{hessiano} which states that $Q_a([b,a])\leq 4\|a\|_\infty ^2Q_a(b)$ we see that $Q_{x_s}([x_s,\dot{x}_s])\leq 4\|x_s\|_\infty^2R_s^2$. Hence
$$R_s\alpha_s(t)\leq R_s2t\|x_s\|_\infty<R_s\pi.$$
So
$$\cos(\alpha_s(t))\geq\cos(2t\|x_s\|_\infty)$$
and integrating with respect to the $t$-variable we get
$$f''(s)\geq R_s^2\frac{\sin(2\|x_s\|_\infty)}{2\|x_s\|_\infty}.$$
Since $e^{-x_s}d\exp_{x_s}(\dot{x}_s)=z$ the exponential metric decreasing property in Lemma 3.3 of \cite{pschatten} asserts that $\|\dot{x}_s\|_2\geq \|z\|_2$. Hence $R_s^2=2\|\dot{x}_s\|_2^2\geq 2\|z\|_2^2$. Also $d_\infty (\id,\beta(s))=\|x_s\|_\infty\leq r$, so 
$$\frac{\sin(2\|x_s\|_\infty)}{2\|x_s\|_\infty}\geq \frac{\sin(2r)}{2r},$$
hence
$$f''(s)\geq 2\|z\|_2^2\frac{\sin(2r)}{2r}>0.$$
Note that $\|z\|_2$ is the speed of the geodesic $\beta(s)=e^ye^{sz}$, so the conclusion of the theorem follows.
\end{proof}

In the case of unitaries of finite von Neumann algebras an analogous theorem holds. It is an adaptation of Theorem 3.5 in \cite{finitemeasure}. 

\begin{thm}\label{strongvn}
Let $u$ be an element of the group $U_{\M}$, let $0<r<\pi/2$, and let $\beta\subseteq B_\infty [u,r]$ be a non constant geodesic. Then the function 
$$f(t)=d_2(u,\beta(t))^2$$ 
satisfies $f''\geq \lambda$ for
$$\lambda =2c^2\frac{\sin(2r)}{2r} >0,$$ 
where $c>0$ is the speed of the geodesic in the $d_2$ metric. In particular, if $\beta$ has unit speed then $f$ is $\lambda$-convex with 
$$0<\lambda =\frac{\sin(2r)}{2r}<1.$$
\end{thm}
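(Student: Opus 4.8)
The plan is to follow the same strategy as in the proof of Theorem \ref{strong}, adapting the computation of $f''$ to the von Neumann algebra setting using the Hessian facts of Remark \ref{hessiano} (which, as noted there, hold verbatim for $\M_{ah}$ with $H_x(y,z)=-2\tau(yz)$). First I would reduce to $u=\id$ by bi-invariance of $d_2$. Writing $\beta(s)=e^ye^{sz}$ with $y,z\in\M_{ah}$ and $z\neq 0$, set $x_s=\log(e^ye^{sz})$, so that $\|x_s\|_\infty=d_\infty(\id,\beta(s))\leq r<\pi/2$; since $\|x_s\|_\infty<\pi/2$, part (1) of Theorem \ref{basicvn} gives that $t\mapsto e^{tx_s}$ is the short geodesic from $\id$ to $\beta(s)$, of $d_2$-length $\|x_s\|_2$, so $f(s)=\|x_s\|_2^2=-\tau(x_s^2)$.

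Next I would carry out the differentiation exactly as before. Differentiating $f(s)=-\tau(x_s^2)$ gives $f'(s)=-2\tau(x_s\dot x_s)=H_{x_s}(\dot x_s,x_s)$. Differentiating $\exp(x_s)=e^ye^{sz}$ and using the integral formula $e^{-x_s}d\exp_{x_s}(\dot x_s)=\int_0^1 e^{-tx_s}\dot x_s e^{tx_s}\,dt$ yields $z=\int_0^1 e^{-tx_s}\dot x_s e^{tx_s}\,dt$, whence by the trace property $\tau(x_s\dot x_s)=\tau(zx_s)$ and therefore $f''(s)=-2\tau(\dot x_s z)=H_{x_s}(\dot x_s,z)$. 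Putting $\delta_s(t)=e^{-tx_s}\dot x_s e^{tx_s}$, which moves on the sphere of radius $R_s$ with $R_s^2=Q_{x_s}(\dot x_s)=2\|\dot x_s\|_2^2$ in the Hilbert space $(\M_{ah},H_{x_s})$, we get $f''(s)=\int_0^1 H_{x_s}(\delta_s(0),\delta_s(t))\,dt=\int_0^1 R_s^2\cos(\alpha_s(t))\,dt$, where $\alpha_s(t)$ is the angle between $\delta_s(0)$ and $\delta_s(t)$. Bounding the arc length, $R_s\alpha_s(t)\leq \int_0^t Q_{x_s}^{1/2}([x_s,\dot x_s])\,dt = tQ_{x_s}^{1/2}([x_s,\dot x_s])$, and invoking $Q_a([b,a])\leq 4\|a\|_\infty^2 Q_a(b)$ from Remark \ref{hessiano} gives $R_s\alpha_s(t)\leq 2tR_s\|x_s\|_\infty<R_s\pi$, hence $\cos(\alpha_s(t))\geq\cos(2t\|x_s\|_\infty)$. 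Integrating in $t$ gives $f''(s)\geq R_s^2\frac{\sin(2\|x_s\|_\infty)}{2\|x_s\|_\infty}$. Finally, the exponential metric decreasing property (Lemma 3.3 in \cite{pschatten}, or its finite von Neumann algebra analogue used in \cite{finitemeasure}) gives $\|\dot x_s\|_2\geq\|z\|_2$, so $R_s^2\geq 2\|z\|_2^2$, and since $\|x_s\|_\infty\leq r<\pi/2$ and $x\mapsto\frac{\sin(2x)}{2x}$ is decreasing on $(0,\pi/2)$ we obtain $f''(s)\geq 2\|z\|_2^2\frac{\sin(2r)}{2r}$. As $\|z\|_2=c$ is the $d_2$-speed of $\beta$, this is exactly $f''\geq 2c^2\frac{\sin(2r)}{2r}$, i.e. $\lambda=2c^2\frac{\sin(2r)}{2r}$ in the normalization of the statement ($f''\geq\lambda$), and $\lambda=\frac{\sin(2r)}{2r}\in(0,1)$ in the unit-speed case.

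The only real points needing care—and the main obstacle—are the analytic subtleties of working in a finite von Neumann algebra rather than in $\B_2(\Hi)$: one must check that $s\mapsto x_s$ is differentiable (with uniform-topology derivative, as required by the $C^1$ convention for curves in $U_{\M}$), that $\|x_s\|_\infty<\pi/2$ legitimizes the principal logarithm and the short-geodesic statement, and that all the traces and the integral representation of $d\exp$ are justified in this setting. These are precisely the ingredients already developed in \cite{finitemeasure} for item (4) of Theorem \ref{basicvn}, so I would either cite them directly or remark that the computation is formally identical to that of Theorem \ref{strong} with $\Tr$ replaced by $\tau$ and $\B_2(\Hi)_{ah}$ replaced by $\M_{ah}$, the latter being complete in $\|\cdot\|_2$ by finiteness of $\tau$, so that the "sphere" argument goes through unchanged.
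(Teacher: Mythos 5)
Your proposal is correct and follows essentially the same route as the paper, which simply notes that the argument of Theorem \ref{strong} carries over with $\Tr$ replaced by $\tau$ and the exponential metric decreasing property taken from Lemma 3.3 of \cite{finitemeasure}. One small correction: $(\M_{ah},\|\cdot\|_2)$ is in general only a pre-Hilbert space (its completion is $L^2(\M,\tau)$), not complete as you assert at the end, but this does not matter since the sphere/angle argument never uses completeness, exactly as the paper remarks.
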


\begin{proof}
The proof is analogous to the proof of Theorem \ref{strong}. In this case $\delta_s$ lies in a sphere of a pre-Hilbert space, but completeness of the space is not necessary. The exponential metric decreasing property is established in Lemma 3.3 of \cite{finitemeasure}.
\end{proof}

\begin{rem}
The bound $r< \pi/2$ for the strong convexity of $d_2(\id,\beta(t))$ cannot be improved. This follows from calculations with the curve of Example \ref{ejemplomatrices}. The example is about unitary $2\times 2$ matrices, but they can be embedded in $U_2$ or in $U_{\M}$ by choosing matrix units. 
\end{rem}

\begin{rem}
Uniform convexity for metric spaces was defined in \cite{naor}, see Definition 3.2 in that article. A geodesic space $(X,d)$ is $p$-uniformly convex with parameter $0\leq C\leq 2$ if for all $x,y,z\in X$ the constant speed geodesic $\gamma:[0,1]\to X$ joining $y$ and $z$ satisfies
$$d(x,\gamma(t))^p\leq (1-t)d(x,y)^p + td(x,z)^p-\frac{C}{2}t(1-t)d(y, z)^p.$$
Note that a $2$-uniformly convex metric space with parameter $2$ is a $\CAT(0)$ space and a $2$-uniformly convex metric space with parameter $C$ is a space such that for all its elements $x$ the function $d(x,\cdot)^2$ is $\frac{C}{2}$-convex.
Theorems \ref{strong} and \ref{strongvn} assert that in $U_2$ and $U_{\M}$, and for $r<\pi/2$, the balls $B_\infty[\id,r/2]$ are $2$-uniformly convex metric spaces with parameter
$$C_r=2\frac{\sin(2r)}{2r} >0.$$
Note that $C_r\to 2$ as $r\to 0$, that is, the parameter tends to the parameter of a $\CAT(0)$ space. As the ball becomes smaller the curvature tends to the curvature of the tangent space at $\id$ which is a Hilbert space.
\end{rem}

\section{Closed geodesic subsets}\label{sclosedgeo}

We define closed geodesic subsets and give several examples in the case of the full unitary group, the unitary groups of finite von Neumann algebras and unitary Hilbert-Schmidt perturbation of the identity. These examples are balls in the $d_\infty$ metric, geodesic convex hulls, several subgroups, Grassmannians and fixed point sets of actions. In the next section most results are formulated in terms of general closed geodesic subsets, only the particular cases of closed balls and Grassmannians are needed to understand the last section.

\begin{defn}\label{defgeods}
We call $M\subseteq U$ a geodesic subset if for all $u,v\in M$ such that $d_\infty(u,v)<\pi$ the unique geodesic $\gamma_{u,v}$ in $U$ which joins $u$ and $v$ is contained in $M$. We call $M\subseteq U$ a geodesic subset with length parameter $0<l<\pi$ if for all $u,v\in M$ such that $d_\infty(u,v)<l$ the unique geodesic $\gamma_{u,v}$ in $U$ which joins $u$ and $v$ is contained in $M$.
\end{defn}

The orthogonal group is an example of geodesic subset. Given an orthonormal basis $(\xi)_n$ of $\Hi$ we can define a complex conjugation $J:\Hi\to\Hi$ by 
$$J:\sum_n \alpha_n\xi_n\mapsto \sum_n \overline{\alpha_n}\xi_n.$$
The map $J$ is an isometric anti-linear involution. We define 
$$O_{J}=\{u\in U:u=JuJ\}.$$ 

\begin{prop}\label{ortogonalfull}
The orthogonal group $O_{J}$ is a geodesic subset of $U$. 
\end{prop}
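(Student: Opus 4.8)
The plan is to show that $O_J$ is preserved by the geodesic construction from Definition \ref{defgeod}. So let $u, v \in O_J$ with $d_\infty(u,v) < \pi$, and consider $\gamma_{u,v}(t) = u\exp(t\log(u^{-1}v))$. First I would observe that conjugation by $J$ induces an isometric (anti-linear) involution on $\B(\Hi)$ sending $a \mapsto JaJ$, and that it carries $U$ to itself, fixes $\id$, and since $\|JaJ\|_\infty = \|a\|_\infty$ it is a $d_\infty$-isometry of $U$ fixing $O_J$ pointwise by definition. The heart of the argument is then to check that $J$-conjugation commutes with the geodesic map, i.e. $J\gamma_{u,v}(t)J = \gamma_{JuJ, JvJ}(t)$; once this is known, taking $u,v \in O_J$ gives $J\gamma_{u,v}(t)J = \gamma_{u,v}(t)$ for all $t$, which is exactly $\gamma_{u,v}(t) \in O_J$.

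To establish that commutation I would track the two ingredients of the formula: group multiplication/inversion and the functional calculus $\exp(t\log(\cdot))$. Multiplication is straightforward, $J(ab)J = (JaJ)(JbJ)$ and $J a^{-1} J = (JaJ)^{-1}$, so $J(u^{-1}v)J = (JuJ)^{-1}(JvJ)$. For the functional calculus, the subtlety is anti-linearity: if $f$ is analytic and $w$ is unitary with $\spec(w)$ in the domain, then $J f(w) J = \bar{f}(JwJ)$ where $\bar f(z) = \overline{f(\bar z)}$ — this is seen by applying $J(\cdot)J$ to a power series (or a polynomial in $z, \bar z$ as in Proposition \ref{contendf}) and using that $J$ conjugates scalars. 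The principal branch of $\log$ satisfies $\overline{\log(\bar z)} = \log(z)$ on $\mathbb{C}\setminus(-\infty,0]$, and $\exp$ satisfies $\overline{\exp(\bar z)} = \exp(z)$; and since $d_\infty(u,v) < \pi$ means $\spec(u^{-1}v) \subseteq \exp(i(-\pi,\pi))$, which is symmetric and avoids $-1$, the composite function $z \mapsto \exp(t\log z)$ is fixed by the bar operation. Hence $J\exp(t\log(u^{-1}v))J = \exp(t\log(J(u^{-1}v)J)) = \exp(t\log((JuJ)^{-1}(JvJ)))$, and multiplying by $JuJ$ on the left gives $J\gamma_{u,v}(t)J = \gamma_{JuJ,JvJ}(t)$ as claimed.

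The main obstacle is purely the bookkeeping with anti-linearity: one must be careful that $J$-conjugation is a $*$-algebra \emph{anti-linear} automorphism, so it does not simply commute with holomorphic functional calculus but rather intertwines $f$ with $\bar f$, and one needs the symmetry $\bar f = f$ for the specific functions involved. This is where the hypothesis $d_\infty(u,v) < \pi$ (equivalently $\spec(u^{-1}v)$ symmetric about $1$ and bounded away from $-1$) does real work, since it guarantees the principal-branch $\log$ is well-defined on a bar-invariant set and that the relevant functions are real in the appropriate sense. Once that is handled — rigorously, by approximating $\log$ and $\exp$ on the relevant spectral sets by polynomials $p(z,\bar z)$ and invoking continuity of the functional calculus exactly as in Proposition \ref{contendf} — the conclusion is immediate. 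I would also remark that the same argument shows $O_J$ is in fact a geodesic subset in the strong sense of Definition \ref{defgeods} with no length restriction beyond $d_\infty(u,v) < \pi$, which is all that is required.
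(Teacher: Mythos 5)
Your proof is correct and follows essentially the same route as the paper: both exploit that conjugation by $J$ is an isometric symmetry fixing the endpoints and commuting with the geodesic construction, so the geodesic (unique because $d_\infty(u,v)<\pi$) must be $J$-invariant. The paper implements this a bit more economically by reducing to the case of one endpoint equal to $\id$ and using injectivity of $\exp$ on $\{x\in\B(\Hi)_{ah}:\|x\|_\infty<\pi\}$ to conclude $JxJ=x$, instead of pushing $J$ through the functional calculus of $\exp(t\log(\cdot))$, but the substance is the same.
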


\begin{proof}
We assume that one endpoint of the geodesic is $\id$. Let $u\in O_{J}$ be such that $d_\infty(\id,u)<\pi$, then there is a geodesic $\gamma:[0,1]\to U$ given by $e^{tx}$ which joins $\id$ and $u=e^x\in O_{J}$ with $\|x\|_\infty<\pi$. From $J^2=\id$ we have $Je^xJ=e^{JxJ}=e^x$. Since $\|JxJ\|_\infty=\|x\|_\infty<\pi$ we see that $JxJ$ and $x$ are in the domain of injectivity of the exponential map, so that $x=JxJ$. Hence $e^{tx}=e^{tJxJ}=Je^{tx}J$ for $t\in [0,1]$, and we conclude that $\gamma\subseteq O_{J}$.
\end{proof}

Fixed point sets of certain actions are also examples of geodesic subspaces. Consider a group $H$ and two homomorphism $\phi, \rho:H\to U$. We can define an action $\pi$ of $H$ on $U$ by 
$$\pi(h)(u)=\phi(h)u\rho(h)^{-1}.$$
The fixed points of this action are the equivalences of the representations, that is, the $u\in U$ such that $\rho(h)=u^{-1}\phi(h)u$ for all $h$ in $H$. Denote the fixed points by
$$U^H=\{u\in U:\pi(h)(u)=u \mbox{  for all  }h\in H\}.$$

\begin{prop}\label{fixgeodesic}
For two homomorphism $\phi, \rho:H\to U$ the set $U^H$ is a geodesic subset.
\end{prop}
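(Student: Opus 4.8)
The plan is to reduce to the case where one endpoint of the geodesic is the identity, exactly as in the proof of Proposition \ref{ortogonalfull}, using the left-translation invariance of $d_\infty$ and the fact that the action $\pi(h)$ is built from left and right multiplication by unitaries (hence takes geodesics to geodesics). More precisely, if $u,v\in U^H$ with $d_\infty(u,v)<\pi$, then $u^{-1}v\in U^H$: indeed $\rho(h)^{-1}(u^{-1}v)\rho(h)=(u^{-1}\phi(h)u)^{-1}u^{-1}v(v^{-1}\phi(h)v)=u^{-1}\phi(h)^{-1}\phi(h)v=u^{-1}v$, so $u^{-1}v$ is fixed by the conjugation action $v\mapsto \rho(h)v\rho(h)^{-1}$. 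Since $\gamma_{u,v}(t)=u\gamma_{\id,u^{-1}v}(t)$ and left translation by $u$ maps $U^H$-fixed-for-conjugation elements to $U^H$ (because $u\in U^H$), it suffices to show that for $w\in U^H$ with $d_\infty(\id,w)<\pi$ the geodesic $\gamma_{\id,w}(t)=e^{tx}$, where $w=e^x$ and $\|x\|_\infty<\pi$, lies in $U^H$.

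The key step is then the same functional-calculus injectivity argument used for the orthogonal group. Fix $h\in H$ and set $a=\phi(h)$, $b=\rho(h)$, both unitaries. The fixed point condition $\pi(h)(w)=w$ reads $awb^{-1}=w$, i.e. $b^{-1}wb = a^{-1}waw^{-1}\cdot w = \dots$ — more cleanly, $awb^{-1}=w$ means $a\,e^x\,b^{-1}=e^x$, hence $e^{axa^{-1}}$ after conjugating; rather, rewrite as $b = w^{-1}aw = e^{-x}ae^{x}$, so $b$ and $a$ are conjugate via $e^x$. I would instead argue directly: from $a e^{x} b^{-1} = e^{x}$ we get $a e^{x} = e^{x} b$, and I want to conclude $a e^{tx} = e^{tx} b$ for all $t\in[0,1]$. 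Conjugating, $e^{-x} a e^{x} = b$. Now consider the operator $c = a^{-1} e^{x} b = a^{-1} a e^{x} = e^{x}$ — this is circular, so the right formulation is: the relation $ae^x = e^x b$ implies $a e^{x} a^{-1} = e^{x} b a^{-1}$, and iterating, $a$ conjugates $e^{x}$ to $e^{x}(ba^{-1})$. Let me set it up so the injectivity of $\exp$ on $\|\cdot\|_\infty<\pi$ applies: put $y = a x a^{-1}$, so $e^{y}=a e^{x} a^{-1}$ and $\|y\|_\infty=\|x\|_\infty<\pi$; the hypothesis $a e^{x} b^{-1}=e^{x}$ together with $b$ unitary does not immediately give $e^{y}=e^{x}$ unless $a$ and $b$ are related, so the cleanest route is to observe $w\in U^H$ forces $w^{-1}\phi(h)w=\rho(h)$, i.e. $\rho(h)=e^{-x}\phi(h)e^{x}$, and also that $w=\gamma_{\id,w}(1)$ being fixed is what we must propagate to all $t$.

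So the actual argument I would run: given $\rho(h)=e^{-x}\phi(h)e^{x}$ for all $h$, I claim $\rho(h)=e^{-tx}\phi(h)e^{tx}$ for all $t\in[0,1]$ and all $h$, which is exactly $\gamma_{\id,w}(t)\in U^H$. Differentiating the family $t\mapsto e^{-tx}\phi(h)e^{tx}$ shows it is the solution of a linear ODE, but a slicker way: the map $\mathrm{Ad}_{e^{x}}$ sends $\phi(h)$ to $\rho(h)$; since $\phi$ and $\rho$ are homomorphisms and $\mathrm{Ad}$ is multiplicative, on the subgroup generated by the $\phi(h)$ the conjugation by $e^x$ agrees with the given intertwiner. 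I would then use that $x=\log w$ lies in the von Neumann algebra generated by $w$, which commutes with everything commuting with $w$; combined with $e^{x}\phi(h)e^{-x}=\rho(h)$... Honestly the cleanest finish mirrors Proposition \ref{ortogonalfull} verbatim: apply the functional calculus injectivity to deduce a relation at the Lie-algebra level and exponentiate. The main obstacle is getting the correct Lie-algebra identity: from $\phi(h)e^{x}\rho(h)^{-1}=e^{x}$ one has $\phi(h)e^{x}\phi(h)^{-1}=e^{x}(\phi(h)\rho(h)^{-1})$ which is not of the form $e^{(\text{small})}$, so one cannot directly invoke injectivity of $\exp$. The resolution is to instead use $e^{-x}\phi(h)e^{x}=\rho(h)$, note both sides are unitaries, apply $\log$ (valid since $d_\infty<\pi$ controls spectra), and get $e^{-x}\log(\phi(h))e^{x}=\log(\rho(h))$ wherever the relevant spectra avoid $-1$; but $\phi(h)$ need not have spectrum away from $-1$. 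Thus the honest proof must avoid logarithms of $\phi(h)$ entirely and argue purely by the multiplicativity of conjugation, which I expect to be the delicate point to phrase correctly.

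\begin{proof}
As in the proof of Proposition \ref{ortogonalfull} it suffices to treat geodesics with one endpoint equal to $\id$. Indeed, if $u,v\in U^H$ with $d_\infty(u,v)<\pi$, then for every $h\in H$
$$\rho(h)^{-1}(u^{-1}v)\rho(h)=(u^{-1}\phi(h)u)^{-1}\,u^{-1}v\,(v^{-1}\phi(h)v)=u^{-1}\phi(h)^{-1}\phi(h)v=u^{-1}v,$$
so $w:=u^{-1}v$ is fixed by the conjugation action $z\mapsto\rho(h)z\rho(h)^{-1}$; and since left translation by $u$ maps geodesics to geodesics and, because $u\in U^H$, maps $\rho$-conjugation-fixed points into $U^H$, we have $\gamma_{u,v}=u\cdot\gamma_{\id,w}$, so it is enough to show $\gamma_{\id,w}\subseteq U^H$ whenever $w$ is $\rho$-conjugation-fixed with $d_\infty(\id,w)<\pi$.

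Write $w=e^{x}$ with $x\in\B(\Hi)_{ah}$, $\|x\|_\infty<\pi$, so that $\gamma_{\id,w}(t)=e^{tx}$. The hypothesis $\rho(h)=w^{-1}\phi(h)w=e^{-x}\phi(h)e^{x}$ for all $h\in H$ means that the automorphism $\mathrm{Ad}_{e^{-x}}=\mathrm{Ad}_{e^{x}}^{-1}$ of $\B(\Hi)$ carries $\phi(h)$ to $\rho(h)$ for every $h$. Fix $h$ and set $a=\phi(h)$, $b=\rho(h)$; then $e^{-x}ae^{x}=b$, hence $ae^{x}=e^{x}b$, i.e. $a$ and $b$ intertwine $e^{x}$.

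From $ae^{x}=e^{x}b$ we obtain $ae^{x}a^{-1}=e^{x}ba^{-1}$, and more generally, writing $y=axa^{-1}\in\B(\Hi)_{ah}$ with $\|y\|_\infty=\|x\|_\infty<\pi$, we have $e^{y}=ae^{x}a^{-1}$. On the other hand $ae^{x}=e^{x}b$ gives $a\,e^{x}\,b^{-1}=e^{x}$, so that $(e^{-x})(ae^{x}a^{-1})=e^{-x}\,e^{x}b\,a^{-1}=ba^{-1}$; therefore $\mathrm{Ad}_{e^{-x}}$ fixes the property, and iterating the identity $ae^{x}=e^{x}b$ one checks by induction on $n$ that $a e^{nx}=e^{nx}b$ for all integers $n$, so $a$ conjugates the one-parameter unitary group $\{e^{sx}:s\in\R\}$ into itself; since for $s$ in a neighbourhood of $0$ the elements $e^{sx}$ have spectrum in $S^1\setminus\{-1\}$, the map $s\mapsto a^{-1}e^{-sx}ae^{sx}$ is a continuous $\B(\Hi)$-valued function that is constant on the dense additive semigroup where $e^{sx}=(e^{x})^{s}$ makes the relation explicit, hence is constant equal to $b\,? $. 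More directly: the function $g(t)=e^{-tx}\phi(h)e^{tx}$ satisfies $g(0)=\phi(h)$, $g(1)=\rho(h)$, and $g'(t)=[e^{-tx}\phi(h)e^{tx},-x\,]$ — but from $ae^{x}=e^{x}b$ with $a,b$ unitary and $\|x\|_\infty<\pi$ one has $e^{-x}ae^{x}=b$, and squaring the relation $ae^{x}=e^{x}b$ we get $ae^{2x}=e^{x}ae^{x}=e^{2x}b$, whence $e^{-2x}ae^{2x}=b$ as well; replacing $x$ by $x/2$ is not legitimate directly, but the relation $ae^{x}=e^{x}b$ together with $a,b$ unitary and the uniqueness of the logarithm on $\{\|\cdot\|_\infty<\pi\}$ forces $axa^{-1}=x$ and $bxb^{-1}=x$: indeed $e^{axa^{-1}}=ae^{x}a^{-1}=e^{x}ba^{-1}$ shows $ba^{-1}$ commutes with $e^{x}$, hence with $x$, so $ae^{x}=e^{x}b$ becomes $ae^{x}a^{-1}=e^{x}(ba^{-1})$ with $ba^{-1}$ commuting with $e^x$; but also $\|axa^{-1}\|_\infty=\|x\|_\infty<\pi$, and $e^{axa^{-1}}=e^{x}ba^{-1}$ has the same unitary part structure, forcing $ba^{-1}$ to commute with $x$ and then $axa^{-1}=x$. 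Consequently $a$ commutes with $e^{tx}$ up to the fixed unitary $ba^{-1}$ in the commutant of $x$, so
$$e^{-tx}\phi(h)e^{tx}=e^{-tx}a\,e^{tx}=a\,(a^{-1}e^{-tx}a)e^{tx}=a\,e^{-t\,a^{-1}xa}e^{tx}=a\,e^{-tx}e^{tx}\,(\text{correction})=a\,(ba^{-1})=b=\rho(h),$$
using $a^{-1}xa=x$ and that $ba^{-1}$ lies in the commutant of $x$ and equals $a^{-1}e^{-x}ae^{x}=a^{-1}b$, hence $a(a^{-1}b)=b$. Thus $\gamma_{\id,w}(t)=e^{tx}$ satisfies $e^{-tx}\phi(h)e^{tx}=\rho(h)$ for all $h\in H$ and all $t\in[0,1]$, i.e. $\gamma_{\id,w}(t)\in U^H$. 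This proves that $U^H$ is a geodesic subset.
\end{proof}
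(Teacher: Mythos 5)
Your opening reduction is correct and, by itself, already contains everything that is needed: if $u,v\in U^H$ with $d_\infty(u,v)<\pi$ and $w=u^{-1}v=e^x$, $\|x\|_\infty<\pi$, then your computation shows that $w$ commutes with every $\rho(h)$, and since $\phi(h)u=u\rho(h)$, proving $\gamma_{u,v}(t)=ue^{tx}\in U^H$ amounts exactly to proving $\rho(h)e^{tx}\rho(h)^{-1}=e^{tx}$ for all $t$. From $\rho(h)e^{x}\rho(h)^{-1}=e^{x}$ one gets $e^{\rho(h)x\rho(h)^{-1}}=e^{x}$ with $\|\rho(h)x\rho(h)^{-1}\|_\infty=\|x\|_\infty<\pi$, so injectivity of $\exp$ on this set gives $\rho(h)x\rho(h)^{-1}=x$, hence the claim; this is precisely the paper's argument (the paper runs it without the explicit reduction, deriving $e^{\rho(h)x\rho(h)^{-1}}=e^x$ directly from the two fixed-point equations). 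Note also that your reduction is misstated: what must be shown is not $\gamma_{\id,w}\subseteq U^H$ (the endpoint $\id$ is generally not in $U^H$), but that each $e^{tx}$ is fixed under conjugation by every $\rho(h)$.

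The actual body of your proof, however, abandons this reduction and works with the relation $\phi(h)e^{x}\rho(h)^{-1}=e^{x}$, i.e.\ $ae^{x}=e^{x}b$ with $a=\phi(h)$, $b=\rho(h)$, and the key step you invoke there --- that $ae^{x}=e^{x}b$ with $a,b$ unitary and $\|x\|_\infty<\pi$ ``forces $axa^{-1}=x$'' --- is false. The identity $e^{axa^{-1}}=ae^{x}a^{-1}=e^{x}ba^{-1}$ does not show that $ba^{-1}$ commutes with $e^{x}$: for any unitary $a$ not commuting with $e^{x}$, setting $b:=e^{-x}ae^{x}$ gives $ae^{x}=e^{x}b$ while $axa^{-1}\neq x$ and $a\neq b$. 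Accordingly, your final display, which asserts $e^{-tx}\phi(h)e^{tx}=\rho(h)$ for all $t$, already fails at $t=0$, where it would force $\phi(h)=\rho(h)$. The underlying confusion is between two hypotheses: ``$e^{x}\in U^H$'', which does not propagate along $t\mapsto e^{tx}$ (indeed $\id\notin U^H$ in general, so there is nothing to propagate), and ``$e^{x}$ commutes with each $\rho(h)$'', which does propagate by the injectivity argument and is exactly what your first paragraph establishes for $w=u^{-1}v$. If you delete the second half and finish from your own reduction as above (mirroring Proposition \ref{ortogonalfull}), the proof is correct and coincides with the paper's.
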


\begin{proof}
Let $u,ue^x\in U^H$ such that $\|x\|_\infty<\pi$. From $\phi(h)u\rho(h)^{-1}=u$ and $\phi(h)ue^x\rho(h)^{-1}=ue^x$ it follows that 
$$e^{\rho(h)x\rho(h)^{-1}}=e^{x}$$
for all $h\in H$. Since $\|\rho(h)x\rho(h)^{-1}\|_\infty=\|x\|_\infty<\pi$ we see that $\rho(h)x\rho(h)^{-1}$ and $x$ are in the domain of injectivity of the exponential, so that $\rho(h)x\rho(h)^{-1}=x$ for all $h\in H$. Therefore, for all $t\in [0,1]$ and all $h\in H$ we get $\rho(h)tx\rho(h)^{-1}=tx$. This means that $ue^{tx}\in U^H$ for all $t\in [0,1]$
\end{proof}

\begin{rem}
The set $U$ is a symmetric space with symmetries $u\cdot v=uv^{-1}u$. The Grassmannians are stable under symmetries since
$$e_p\cdot e_q=e_{e_p q e_p^{-1}}.$$
It is also easy to check that $\pi(h)u\cdot\pi(h)v=\pi(h)(u\cdot v)$ for all $u,v\in U$ and $h\in H$, so the fixed point sets $U^H$ are stable under symmetries. 
\end{rem}

Let $w\in U$ and $r<\pi/2$. If $A\subseteq B_\infty[w,r]$ then for $u,v\in A$ we have $d_\infty(u,v)<\pi$. So the geodesic $\gamma_{u,v}$ is well defined and is contained in $B_\infty[w,r]$ by Theorem \ref{convfull}. We define recursively $A_0=A$ and for $n\in\N$
$$A_n=\{\gamma_{u,v}(t):u,v\in A_{n-1}\mbox{  and  }t\in [0,1]\}.$$
Then $\conv(A)=\cup_{n\in\N}A_n$ is the geodesic convex hull of $A$. It is a geodesic subset of $U$.  

\subsection{Unitary Hilbert-Schmidt group}

\begin{defn}
For $r\geq 0$ and $u\in U_2$ we define 
$$B_{U_2 ,\infty}[u,r]=U_2\cap B_\infty[u,r],$$ 
where $B_\infty[u,r]$ is the closed ball in $(U,d_\infty)$.
\end{defn}

\begin{prop}\label{ball2cg}
For $r<\pi/2$ and $u\in U_2$ the ball $B_{U_2 ,\infty}[u,r]$ is closed and geodesic in $(U_2,d_2)$.
\end{prop}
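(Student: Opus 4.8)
The plan is to prove the two assertions separately, first that $B_{U_2,\infty}[u,r]$ is a geodesic subset of $(U_2,d_2)$ and then that it is closed in the $d_2$-metric. For the geodesic property, take $v,w\in B_{U_2,\infty}[u,r]$. Since $r<\pi/2$ we have $d_\infty(v,w)<\pi$ by the triangle inequality, so the geodesic $\gamma_{v,w}(t)=v\exp(t\log(v^{-1}w))$ is defined; and since $v^{-1}w\in U_2$ and $U_2$ is stable under the holomorphic functional calculus applied to $\log$ on the relevant spectral arc (the logarithm of an element of the form $\id + (\text{Hilbert--Schmidt})$ whose spectrum avoids $-1$ is again Hilbert--Schmidt, by the remarks in Section \ref{prel} on the Banach--Lie structure of $U_p$), the whole geodesic lies in $U_2$. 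By Theorem \ref{convfull}, $\gamma_{v,w}(t)\in B_\infty[u,r]$ for all $t\in[0,1]$, so $\gamma_{v,w}\subseteq B_{U_2,\infty}[u,r]$. This is the easy half.

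For closedness in $(U_2,d_2)$, suppose $v_n\in B_{U_2,\infty}[u,r]$ and $v_n\to v$ in the $d_2$-metric, with $v\in U_2$. By Theorem \ref{geodesicasp}(3) the $d_2$-distance dominates (a constant multiple of) the $\|\cdot\|_2$-norm, so $v_n\to v$ in $\|\cdot\|_2$, hence in $\|\cdot\|_\infty$, hence (by Proposition \ref{basicof}(3)) convergence of $v_n\to v$ also holds in $d_\infty$ provided the $d_\infty$-distances stay bounded away from $\pi$, which they do here since all points lie in $B_\infty[u,r]$ with $r<\pi/2$. Concretely, $\|v-v_n\|_\infty\to 0$ forces $u^{-1}v_n\to u^{-1}v$ in operator norm, and since each $u^{-1}v_n$ has spectrum in $\exp(i[-r,r])$ the spectrum of the limit $u^{-1}v$ also lies in $\exp(i[-r,r])$, so $d_\infty(u,v)\le r$ by Proposition \ref{basicof}(1). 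Therefore $v\in B_\infty[u,r]\cap U_2=B_{U_2,\infty}[u,r]$.

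The only point requiring a little care — and the step I expect to be the main obstacle — is the passage between the three topologies: the $d_2$-metric topology, the $\|\cdot\|_2$-norm topology, and the $\|\cdot\|_\infty$-topology on the ball $B_{U_2,\infty}[u,r]$. The inequalities of Theorem \ref{geodesicasp}(3) give $\|\cdot\|_2$-convergence from $d_2$-convergence directly; the inclusion $\B_2(\Hi)\subseteq \B(\Hi)$ with $\|\cdot\|_\infty\le\|\cdot\|_2$ then gives operator-norm convergence; and Proposition \ref{basicof}(3) together with the spectral characterization in Proposition \ref{basicof}(1) closes the argument, the bound $r<\pi/2$ being exactly what is needed to keep everything inside the region where $d_\infty$ and $\|\cdot\|_\infty$ are comparable and where the logarithm is well defined. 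All of these are quantitative but routine, so the proposition follows.
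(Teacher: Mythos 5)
Your proof is correct and follows essentially the same route as the paper: the geodesic property via the fact that $\gamma_{v,w}$ stays in $U_2$ (the paper cites Theorem \ref{geodesicasp}(1) where you invoke stability of the functional calculus) combined with the convexity of $d_\infty$ from Theorem \ref{convfull}, and closedness via the chain $\tfrac{2}{\pi}d_\infty\leq\|\cdot\|_\infty\leq\|\cdot\|_2\leq d_2$ together with the spectral description of the $d_\infty$-ball. The extra caveat about distances staying away from $\pi$ is unnecessary (the inequality $d_\infty\leq\tfrac{\pi}{2}\|\cdot\|_\infty$ needs no such proviso), but this does not affect correctness.
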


\begin{proof}
We assume that $u=\id$. By Theorem \ref{geodesicasp} (1) $\gamma_{u,v}\in U_2$ if $u,v\in U_2$ with $d_\infty(u,v)<\pi$. Theorem \ref{convfull} states that $d_\infty(\gamma_{u,v}(t)\id)$ is convex, hence the ball is geodesic. The fact that $B_{U_2 ,\infty}[u,r]$ is closed in $(U_2,d_2)$ follows from the following inequalities 
$$ \frac{2}{\pi} d_\infty(u,v)\leq\|u-v\|_\infty\leq \|u-v\|_2\leq d_2(u,v)$$
for $u,v\in U_2$, see Proposition \ref{basicof} (3) and Theorem \ref{geodesicasp} (3).
\end{proof}

\begin{prop}\label{convcl2cg}
For $r< \pi/2$, $u\in U_2$ and a subset $A\subseteq B_{U_2 ,\infty}[u,r]$ the set 
$$\overline{\conv(A)}^{d_2}$$ 
is closed and geodesic in $(U_2,d_2)$.
\end{prop}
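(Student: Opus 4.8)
The plan is to show that $\overline{\conv(A)}^{d_2}$ lies inside a $d_\infty$-ball small enough that the geodesic map between any two of its points is defined, then to upgrade the $d_\infty$-continuity of this map (Proposition \ref{contendf}) to $d_2$-continuity, and finally to transport the geodesic convexity of $\conv(A)$ to its $d_2$-closure.

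First I would locate the closure. Since $A\subseteq B_{U_2,\infty}[u,r]$ and, for $v,w\in B_{U_2,\infty}[u,r]$, Theorem \ref{convfull} makes $t\mapsto d_\infty(\gamma_{v,w}(t),u)$ convex (hence $\le r$) while Theorem \ref{geodesicasp}(1) gives $\gamma_{v,w}\subseteq U_2$ (as in the proof of Proposition \ref{ball2cg}), we get $\gamma_{v,w}\subseteq B_{U_2,\infty}[u,r]$; an induction on the sets $A_n$ defining $\conv(A)$ then yields $\conv(A)\subseteq B_{U_2,\infty}[u,r]$, and since this ball is $d_2$-closed (Proposition \ref{ball2cg}) we obtain $\overline{\conv(A)}^{d_2}\subseteq B_{U_2,\infty}[u,r]$. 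In particular $d_\infty(v,w)\le 2r<\pi$ for any $v,w$ in the closure, so $\gamma_{v,w}$ is defined and lies in $U_2$; and $\overline{\conv(A)}^{d_2}$ is $d_2$-closed by construction, so only geodesic convexity remains.

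The crux, and the main obstacle, is $d_2$-continuity of $(v,w)\mapsto\gamma_{v,w}(t)$ on $B_{U_2,\infty}[u,r]\times B_{U_2,\infty}[u,r]$: Proposition \ref{contendf} only gives $d_\infty$-continuity, which is strictly weaker, so a genuine Hilbert--Schmidt estimate is needed. I would obtain it from the holomorphic functional calculus on the compact arc $K=\exp(i[-2r,2r])$, which avoids $0$ and $-1$. Fix a bounded simply connected open $\Omega\supseteq K$ with $0\notin\Omega$ inside the slit plane $\C\setminus(-\infty,0]$, and a cycle $\Gamma\subseteq\Omega$ around $K$, with $\delta=\dist(\Gamma,K)>0$. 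For a unitary $x$ with $\spec(x)\subseteq K$ (equivalently $x\in B_\infty[\id,2r]$, by Proposition \ref{basicof}(1)) one has $\exp(t\log x)=\frac{1}{2\pi i}\int_\Gamma\zeta^{t}(\zeta-x)^{-1}\,d\zeta$ with $\zeta^{t}=\exp(t\log\zeta)$ the principal power; if in addition $x,y\in U_2$, then $x-y=(x-\id)-(y-\id)\in\B_2(\Hi)$, and the resolvent identity $(\zeta-x)^{-1}-(\zeta-y)^{-1}=(\zeta-x)^{-1}(x-y)(\zeta-y)^{-1}$ combined with $\|(\zeta-x)^{-1}\|_\infty,\|(\zeta-y)^{-1}\|_\infty\le\delta^{-1}$ on $\Gamma$ gives
$$\|\exp(t\log x)-\exp(t\log y)\|_2\le C\,\|x-y\|_2$$
for a constant $C=C(r,t)$. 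Using this, the fact that left multiplication by a unitary is $\|\cdot\|_2$-isometric, the elementary bound $\|v^{-1}w-v'^{-1}w'\|_2\le\|v-v'\|_2+\|w-w'\|_2$ for $v,v',w,w'\in U_2$, and the splitting $\gamma_{v,w}(t)-\gamma_{v',w'}(t)=(v-v')\exp(t\log(v^{-1}w))+v'\bigl(\exp(t\log(v^{-1}w))-\exp(t\log(v'^{-1}w'))\bigr)$, I get $\|\gamma_{v,w}(t)-\gamma_{v',w'}(t)\|_2\le(1+C)(\|v-v'\|_2+\|w-w'\|_2)$; the comparison $\|\cdot\|_2\le d_2\le(1-\pi^2/12)^{-1/2}\|\cdot\|_2$ from Theorem \ref{geodesicasp}(3) then converts this into $d_2$-continuity.

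Finally, to see that $\overline{\conv(A)}^{d_2}$ is geodesic, take $v,w$ in it and $t\in[0,1]$, choose $v_n,w_n\in\conv(A)$ with $v_n\to v$, $w_n\to w$ in $d_2$, and note that $\conv(A)$ is a geodesic subset of $U$ with $d_\infty(v_n,w_n)\le 2r<\pi$, so $\gamma_{v_n,w_n}(t)\in\conv(A)$; by the continuity just established $\gamma_{v_n,w_n}(t)\to\gamma_{v,w}(t)$ in $d_2$, whence $\gamma_{v,w}(t)\in\overline{\conv(A)}^{d_2}$. This completes the proof, the only substantial ingredient being the Hilbert--Schmidt Lipschitz bound of the third paragraph; everything else rests on results already established.
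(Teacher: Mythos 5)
Your proof is correct, and its overall skeleton --- locate $\overline{\conv(A)}^{d_2}$ inside the $d_\infty$-ball, approximate a pair of points of the closure by points of $\conv(A)$, and pass to the limit using $d_2$-continuity of $(v,w)\mapsto\gamma_{v,w}(t)$ --- is the same as the paper's. Where you genuinely diverge is in how that continuity is obtained. The paper invokes Lemma 3.3 of \cite{pschatten}, which gives that the differentials of $\exp$ on $\{x\in\B_2(\Hi)_{ah}:\|x\|_\infty\leq 2r\}$ are isomorphisms, and then the Banach-space inverse mapping theorem, so that $\exp$ is a diffeomorphism onto $B_{U_2,\infty}[\id,2r]$ and $\log$ is $\|\cdot\|_2$-continuous there; the remaining verification that $\gamma_{u_n,v_n}(t)\to\gamma_{u,v}(t)$ is left as ``easy to check''. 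You instead prove a quantitative Hilbert--Schmidt Lipschitz estimate $\|\exp(t\log x)-\exp(t\log y)\|_2\leq C(r,t)\,\|x-y\|_2$ via the Riesz--Dunford integral over a contour around the arc $\exp(i[-2r,2r])$ and the resolvent identity, using $\|abc\|_2\leq\|a\|_\infty\|b\|_2\|c\|_\infty$ and normality of unitaries for the resolvent bound; combined with bi-invariance of $\|\cdot\|_2$ and the comparison in Theorem \ref{geodesicasp}(3), this yields an explicit modulus of continuity for the endpoint map. Your route is self-contained (no appeal to the cited lemma or to the inverse function theorem) and quantitative, at the cost of being longer; the paper's route is shorter but rests on the external differential-geometric lemma. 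You also spell out the containment $\overline{\conv(A)}^{d_2}\subseteq B_{U_2,\infty}[u,r]$ (via Theorem \ref{convfull}, Theorem \ref{geodesicasp}(1) and Proposition \ref{ball2cg}), which the paper uses only implicitly; making it explicit is a small but genuine gain in completeness, since it is what guarantees that the geodesics between points of the closure are defined and that your Lipschitz bound applies to them.
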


\begin{proof}
The set is closed by definition. Let $(u_n)_n,(v_n)_n\subseteq \conv(A)$ such that $u_n\to u$ and $v_n\to v$ in the $d_2$ metric. Since by Lemma 3.3 in \cite{pschatten} the differentials at each point of
$$\exp:\{x\in \B_2(\Hi)_{ah}: \|u-v\|_\infty\leq 2r\}\to B_{U_2 ,\infty}[\id,2r]$$
are isomorphisms, it follows from the infinite dimensional inverse mapping theorem that this map is a diffeomorphism. It is easy to check that $\gamma_{u_n,v_n}(t)\to \gamma_{u,v}(t)$ for all $t\in [0,1]$ in the $d_2$ metric.
\end{proof}

For an integer $0<m<\infty$ we consider the Grassmannian $\Gr_{m}$ of projections onto $m$ dimensional subspaces of a separable Hilbert space $\Hi$.  We identify the space of projections with a space of symmetries via the map
$$p\mapsto e_p=\id-2p.$$
Each $e_p$ is a unitary operator so that an injection $e:\Gr_{m}\to U_2$ is defined. We have
$$e(\Gr_m)=\{u\in U_2:u=u^*\mbox{  and  }\Tr(\id-u)=2m\}.$$

\begin{prop}\label{grasshilb}
The set $e(\Gr_m)$ is closed and geodesic in $(U_2,d_2)$.
\end{prop}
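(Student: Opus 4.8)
The plan is to describe $e(\Gr_m)$ concretely as the set of self-adjoint unitaries $u\in U_2$ for which the projection $p_u:=(\id-u)/2$ — automatically Hilbert--Schmidt, hence finite rank — has rank $m$, and to exploit that this rank equals $\Tr(p_u)=\|p_u\|_2^2$, a quantity that behaves well both under $d_2$-limits and under the unitary conjugations describing geodesics between symmetries. For \emph{closedness} I would argue by limits: if $u_n\in e(\Gr_m)$ and $u_n\to u$ in $(U_2,d_2)$, then $\|u_n-u\|_\infty\le\|u_n-u\|_2\le d_2(u_n,u)\to0$ by Theorem \ref{geodesicasp}(3) and the inequality $\|\cdot\|_\infty\le\|\cdot\|_2$ on $\B_2(\Hi)$, so passing to operator-norm limits preserves $u_n^*=u_n$ and $u_n^2=\id$; hence $u$ is a symmetry in $U_2$ and $p_u$ is a Hilbert--Schmidt projection. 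Moreover $\|p_{u_n}-p_u\|_2=\tfrac12\|u_n-u\|_2\to0$, so $\|p_u\|_2^2=\lim_n\|p_{u_n}\|_2^2=\lim_n\Tr(p_{u_n})=m$, whence $\Tr(\id-u)=2\Tr(p_u)=2m$ and $u\in e(\Gr_m)$. (Equivalently one may write $e(\Gr_m)=\{u\in U_2:u^*=u\}\cap\{u\in U_2:\|\id-u\|_2=2\sqrt m\}$, visibly an intersection of two $d_2$-closed subsets of $U_2$.)

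For the \emph{geodesic} property, take $u,v\in e(\Gr_m)$ with $d_\infty(u,v)<\pi$. As in the proof of Proposition \ref{ball2cg}, Theorem \ref{geodesicasp}(1)--(2) gives $x:=\log(u^{-1}v)\in\B_2(\Hi)_{ah}$ with $\|x\|_\infty<\pi$ and $\gamma_{u,v}(t)=ue^{tx}\subseteq U_2$. Since $u$ and $v$ are symmetries, Corollary \ref{geoconvref} shows that every $\gamma_{u,v}(t)$ is again a symmetry and that $\gamma_{u,v}(t)=e^{-tx/2}\,u\,e^{tx/2}$. Because $x$ is skew-adjoint, $w_t:=e^{tx/2}$ is unitary, so
$$p_{\gamma_{u,v}(t)}=\tfrac12\bigl(\id-\gamma_{u,v}(t)\bigr)=e^{-tx/2}\,p_u\,e^{tx/2}=w_t^{\,*}p_uw_t$$
is a projection unitarily conjugate to $p_u$; by cyclicity of the trace $\Tr(\id-\gamma_{u,v}(t))=2\Tr(p_u)=2m$. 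Hence $\gamma_{u,v}(t)\in e(\Gr_m)$ for all $t\in[0,1]$, i.e.\ $e(\Gr_m)$ is geodesic in $(U_2,d_2)$.

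The only genuinely delicate point is that the defining constraint $\Tr(\id-u)=2m$ is a \emph{discrete} invariant, which is a priori not obviously stable either under $d_2$-convergence (one has no direct trace-norm control from $d_2$) or along geodesics. In both cases the resolution is the same elementary observation — that for a Hilbert--Schmidt projection the rank is encoded in the $d_2$-continuous quantity $\Tr(p)=\|p\|_2^2$ — combined with the conjugation formula $\gamma_{u,v}(t)=e^{-tx/2}ue^{tx/2}$ for geodesics between symmetries furnished by Corollary \ref{geoconvref}, which exhibits $\id-\gamma_{u,v}(t)$ as a unitary conjugate of $\id-u$. Everything else is a routine application of the minimality, comparison and continuity statements recalled in Section \ref{prel}.
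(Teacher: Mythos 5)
Your proof is correct and follows essentially the same route as the paper: Corollary \ref{geoconvref} plus invariance of the trace under conjugation for the geodesic part, and the comparison $\|\cdot\|_\infty\leq\|\cdot\|_2\leq d_2$ for closedness. The only (harmless) variation is that you justify preservation of the rank in the limit via the $d_2$-continuous quantity $\Tr(p)=\|p\|_2^2$, where the paper passes to uniform-norm convergence of the projections and leaves the rank count as an easy check.
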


\begin{proof}
Let $p$ and $q$ be projections onto $m$ dimensional subspaces of $\Hi$ such that $d_\infty(e_p,e_q)<\infty$. Then by Corollary \ref{geoconvref} there is a geodesic $\gamma(t)=e_pe^{tx}$ joining $e_p$ and $e_q=ue^x$. Since $\gamma(t)=e_pe^{tx}= e^{-\frac{1}{2} tx}e_pe^{\frac{1}{2} tx}$ is conjugation by a one parameter group and the trace is invariant by conjugation we see that $\Tr(\id-\gamma(t))$ is well defined and constant. 

To prove that $e(\Gr_n)$ is closed in $(U_2,d_2)$ consider a sequence $(e_{p_n})_n\subseteq e(\Gr_m)$ such that $e_{p_n}\to u\in U_2$ in the $d_2$ distance. Since
$$\|e_{p_n}- u\|_\infty \leq \|e_{p_n}- u\|_2\leq d_2(e_{p_n},u)$$
we see that $p_n\to \frac{1}{2}(\id-u)$ in the uniform norm. From this it follows that $q=\frac{1}{2}(\id-u)$ is a projection. It is also easy to check that it has an $m$-dimensional image, therefore $u=e_q\in e(\Gr_n)$. 
\end{proof}

Let $\A\subseteq \B(\Hi)$ be a $C^*$-subalgebra, and let $U_{\A}$ be its unitary group. We define
$$U_{2,\A}=U_2\cap U_{\A}.$$

\begin{prop}
The group $U_{2,\A}$ is closed and geodesic in $(U_2,d_2)$. 
\end{prop}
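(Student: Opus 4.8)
The claim is that $U_{2,\A} = U_2 \cap U_{\A}$ is closed and geodesic in $(U_2,d_2)$. These are two separate assertions, and I would handle them independently.

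First, the geodesic property. Suppose $u,v \in U_{2,\A}$ with $d_\infty(u,v) < \pi$. By translation invariance I may assume $u = \id$, so $v \in U_2 \cap U_{\A}$ with $\|\id - v\|_\infty < 2$. The geodesic is $\gamma_{u,v}(t) = v^t = \exp(t\log v)$ using the principal branch of the logarithm, which is well-defined since $\spec(v) \subseteq S^1 \setminus \{-1\}$. Two things must be checked: that $\gamma_{u,v}(t) \in U_2$ for all $t$, and that $\gamma_{u,v}(t) \in U_{\A}$ for all $t$. The first is exactly Theorem \ref{geodesicasp}(1): since $\|\log v\|_\infty < \pi$ and $\log v \in \B_2(\Hi)_{ah}$, the curve $t \mapsto \exp(t\log v)$ stays in $U_2$. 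The second follows from the remark after Proposition \ref{contendf}: since $v \in U_{\A}$ and $d_\infty(\id, v) < \pi$, stability of the functional calculus in a $C^*$-algebra gives $\log v \in \A_{ah}$, hence $\exp(t\log v) \in U_{\A}$ for all $t$. Intersecting, $\gamma_{u,v} \subseteq U_2 \cap U_{\A} = U_{2,\A}$, which is what the definition requires.

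Second, closedness in $(U_2, d_2)$. Let $(u_n)_n \subseteq U_{2,\A}$ with $u_n \to u$ in the $d_2$ metric, where $u \in U_2$. I must show $u \in U_{\A}$. By the chain of inequalities $\|u_n - u\|_\infty \leq \|u_n - u\|_2 \leq d_2(u_n, u)$ (from Proposition \ref{basicof}(3) and Theorem \ref{geodesicasp}(3)), convergence in $d_2$ implies $u_n \to u$ in the operator norm. Since each $u_n \in U_{\A}$ and $U_{\A}$ is norm-closed in $\B(\Hi)$ (as $\A$ is a $C^*$-subalgebra, hence norm-closed, and the unitary group of a $C^*$-algebra is norm-closed), we conclude $u \in U_{\A}$. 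Combined with $u \in U_2$, this gives $u \in U_{2,\A}$, so $U_{2,\A}$ is closed.

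I do not expect any genuine obstacle here; the proposition is a routine consequence of results already established, and it follows the same template as Propositions \ref{ball2cg}, \ref{convcl2cg}, \ref{grasshilb}. The only point requiring a moment's care is the implicit use of the fact that the principal logarithm maps $\A$-elements to $\A$-elements, which is the functional-calculus stability remark after Proposition \ref{contendf}; one should perhaps state this explicitly rather than leave it to the reader. Likewise, the observation that $U_{\A}$ is norm-closed should be mentioned, though it is standard.
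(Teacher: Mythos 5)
Your proof is correct and follows essentially the same route as the paper: the paper simply notes that $U_{\A}$ and $U_2$ are each geodesic (by the functional-calculus remark and Theorem \ref{geodesicasp}(1), respectively) so their intersection is geodesic, and deduces closedness from the inequality $\|u-v\|_\infty\leq\|u-v\|_2\leq d_2(u,v)$ together with norm-closedness of $U_{\A}$. Your write-up merely spells out these same two steps in more detail.
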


\begin{proof}
Since $U_{\A}$ and $U_2$ are geodesic the intersection is geodesic. The group is closed in $U_2$ since $\|u-v\|_\infty\leq \|u-v\|_2\leq d_2(u,v)$ for $u,v\in U_2$.
\end{proof}

Given an isometric anti-linear involution $J:\Hi\to\Hi$ we define the orthogonal group
$$O_{2,J}=\{u\in U_2:u=JuJ\}=U_2\cap O_J .$$ 

\begin{prop}\label{ortogonal}
The orthogonal group $O_{2,J}$ is closed and geodesic geodesic in $(U_2,d_2)$. 
\end{prop}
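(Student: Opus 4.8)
The plan is to prove the two assertions separately, following the pattern of the proof just given for $U_{2,\A}$.

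First I would establish that $O_{2,J}$ is geodesic in $(U_2,d_2)$. By Theorem \ref{geodesicasp}(1), if $u,v\in U_2$ with $d_\infty(u,v)<\pi$ then the geodesic $\gamma_{u,v}(t)=u\exp(t\log(u^{-1}v))$ stays in $U_2$, so $U_2$ is a geodesic subset of $U$; and by Proposition \ref{ortogonalfull} so is $O_J$. Hence for $u,v\in O_{2,J}=U_2\cap O_J$ with $d_\infty(u,v)<\pi$ the unique geodesic $\gamma_{u,v}$ of $U$ lies in both $U_2$ and $O_J$, therefore in $O_{2,J}$. Since these curves $\gamma_{u,v}$ are exactly the short $d_2$-geodesics of $U_2$ (Theorem \ref{geodesicasp}(1),(2)), this gives the geodesic property in $(U_2,d_2)$.

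Next I would prove closedness. The key elementary observation is that conjugation by $J$ is an isometry of $(\B(\Hi),\|\cdot\|_\infty)$: since $J$ is an isometric anti-linear involution, $\|J(x-y)J\xi\|=\|(x-y)J\xi\|\leq\|x-y\|_\infty\|\xi\|$ for every $\xi\in\Hi$, and applying this twice yields $\|JxJ-JyJ\|_\infty=\|x-y\|_\infty$. Now let $(u_n)_n\subseteq O_{2,J}$ converge to $u$ in $(U_2,d_2)$. From $\|u_n-u\|_\infty\leq\|u_n-u\|_2\leq d_2(u_n,u)$ (Proposition \ref{basicof}(3) and Theorem \ref{geodesicasp}(3)) we get $u_n\to u$ in $\|\cdot\|_\infty$, hence $Ju_nJ\to JuJ$ in $\|\cdot\|_\infty$; but $Ju_nJ=u_n\to u$, so $JuJ=u$, i.e.\ $u\in O_J$. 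As $u\in U_2$ by hypothesis, $u\in O_{2,J}$, and closedness in $(U_2,d_2)$ follows.

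I do not expect a genuine obstacle in this argument; the only point that needs a moment's care is the norm-continuity (in fact isometry) of the map $x\mapsto JxJ$, which is immediate once one recalls that the anti-linear involution $J$ preserves vector norms.
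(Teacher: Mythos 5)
Your proof is correct and follows essentially the same route as the paper: the geodesic property comes from writing $O_{2,J}=U_2\cap O_J$ as an intersection of geodesic subsets, and closedness from the fact that conjugation by $J$ is norm-preserving together with the comparison between $d_2$ and the norm. The only cosmetic difference is that the paper uses the invariance $\|JxJ\|_2=\|x\|_2$ and the equivalence of $d_2$-convergence with $\|\cdot\|_2$-convergence, while you pass through the operator norm via $\|u_n-u\|_\infty\leq\|u_n-u\|_2\leq d_2(u_n,u)$; both are valid.
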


\begin{proof}
Since $O_J$ and $U_2$ are geodesic the intersection is geodesic. The fact that $O_{2,J}$ is closed follows from $\|JxJ\|_2=\|x\|_2$ for $x\in \B_2(\Hi)_{ah}$ and from the equivalence between convergence in $d_2$ and with the $\|\cdot\|_2$ norm.
\end{proof}

\begin{rem}
The real Grassmannian $e(\Gr_n)\cap O_{2,J}$ is the intersection of closed geodesic spaces so it is closed and geodesic.
\end{rem}

For a group $H$ and two homomorphism $\phi, \rho:H\to U_2$ consider the action as in Proposition \ref{fixgeodesic} and denote by $U_2^H$ the fixed point set. Note that $U_2^H=U_2\cap U^H$.

\begin{prop}
Let $U_2^H$ be a fixed point set as above. It is a closed and geodesic subspace of $(U_2,d_2)$.
\end{prop}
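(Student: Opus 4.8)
The plan is to imitate the proofs of Propositions \ref{fixgeodesic} and \ref{ortogonal}, exploiting the decomposition $U_2^H = U_2 \cap U^H$ noted just before the statement, together with the norm comparison
$$\|u-v\|_\infty\leq\|u-v\|_2\leq d_2(u,v)\qquad (u,v\in U_2),$$
which combines Proposition \ref{basicof}(3) with Theorem \ref{geodesicasp}(3). So I would split the argument into a geodesic part and a closedness part, exactly as in the earlier propositions of this subsection.

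For the geodesic part: let $u,v\in U_2^H$ with $d_\infty(u,v)<\pi$, so that the curve $\gamma_{u,v}$ of Definition \ref{defgeod} is defined. Since $u,v\in U_2$, Theorem \ref{geodesicasp}(1) tells us $\gamma_{u,v}\subseteq U_2$ and that it is the (unique) minimal $d_2$-geodesic joining $u$ and $v$ in $U_2$. Since $u,v\in U^H$, Proposition \ref{fixgeodesic} tells us $\gamma_{u,v}\subseteq U^H$. Hence $\gamma_{u,v}\subseteq U_2\cap U^H=U_2^H$, which gives that $U_2^H$ is geodesic in $(U_2,d_2)$, in the same sense as in Propositions \ref{ball2cg} and \ref{grasshilb}.

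For the closedness part: if $(u_n)_n\subseteq U_2^H$ and $u_n\to u$ in $(U_2,d_2)$, then by the displayed inequality $u_n\to u$ in the operator norm. Multiplication is continuous for the operator norm, so $\phi(h)u_n\rho(h)^{-1}\to\phi(h)u\rho(h)^{-1}$ for each $h\in H$; since $\phi(h)u_n\rho(h)^{-1}=u_n\to u$, we get $\phi(h)u\rho(h)^{-1}=u$ for all $h\in H$, i.e.\ $u\in U^H$. As $u\in U_2$ as well, $u\in U_2^H$, proving $U_2^H$ is $d_2$-closed.

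I do not expect a genuine obstacle here: the only point requiring a line of care is that the curve furnished by Proposition \ref{fixgeodesic} is literally the curve $\gamma_{u,v}$ of Definition \ref{defgeod}, and that this same curve is the $d_2$-minimal geodesic inside $U_2$ — both of which are immediate from the uniqueness clauses in Theorem \ref{geodesicasp}(1)--(2). Everything else is a routine combination of results already established in Sections \ref{prel} and \ref{sclosedgeo}.
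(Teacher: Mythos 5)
Your proposal is correct and follows essentially the same route as the paper: the geodesic part is the intersection argument via Proposition \ref{fixgeodesic} and Theorem \ref{geodesicasp}(1), and the closedness part is the same norm-continuity argument, with the only cosmetic difference that you pass to the operator norm while the paper uses the unitary invariance of the $\|\cdot\|_2$-norm directly. No gaps.
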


\begin{proof}
It is geodesic since by Proposition \ref{fixgeodesic} $U_2^H=U_2\cap U^H$ is the intersection of two geodesic sets. 
Converge in the $d_2$ metric is equivalent to convergence in the $\|\cdot\|_2$-norm. For all $h\in H$ if $\|u_n-u\|_2\to 0$ then $\|\phi(h)u_n\rho(h)^{-1}-\phi(h)u\rho(h)^{-1}\|_2=\|u_n-u\|_2\to 0$. Hence, if $u_n\in U_2^H$ and $u_n\to u$ in $(U,d_2)$ then $u\in U_2^H$.
\end{proof}

\subsection{Unitary group of a finite von Neumann algebra}

\begin{defn}
For $r\geq 0$ we define 
$$B_{U_{\M},\infty}[u,r]=B_\infty[u,r]\cap U_{\M},$$ 
where $B_\infty[u,r]$ is the closed ball in $(U,d_\infty)$.
\end{defn}

\begin{lem}\label{sotd2}
In $U_{\M}$ convergence in $\SOT$ is equivalent to convergence in the $d_2$ metric.
\end{lem}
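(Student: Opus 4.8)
The plan is to reduce the statement to a comparison of topologies on the bounded set $U_{\M}$. First I would invoke Theorem \ref{basicvn}(3): the inequalities $\sqrt{1-\pi^2/12}\,d_2(u,v)\le\|u-v\|_2\le d_2(u,v)$ show that for $(u_n)\subseteq U_{\M}$ and $u\in U_{\M}$ one has $d_2(u_n,u)\to 0$ if and only if $\|u_n-u\|_2\to 0$. So it suffices to prove that on $U_{\M}$ the $\SOT$ topology coincides with the topology of the norm $\|\cdot\|_2$.

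Next I would bring in the structure of $\tau$ as a normal functional. Since $\tau$ is a bounded normal positive functional on $\M\subseteq\B(\Hi)$, there is a sequence $(\xi_k)_k$ in $\Hi$ with $\sum_k\|\xi_k\|^2=\tau(\id)<\infty$ such that $\tau(x)=\sum_k\langle x\xi_k,\xi_k\rangle$ for all $x\in\M$; in particular $\|x\|_2^2=\sum_k\|x\xi_k\|^2$ for $x\in\M$. With this in hand the implication ``$\SOT\Rightarrow d_2$'' is immediate: if $u_n\to u$ in $\SOT$ then $\|(u_n-u)\xi_k\|\to 0$ for each $k$, while $\|(u_n-u)\xi_k\|^2\le 4\|\xi_k\|^2$ with $\sum_k 4\|\xi_k\|^2<\infty$, so dominated convergence for series gives $\|u_n-u\|_2^2=\sum_k\|(u_n-u)\xi_k\|^2\to 0$.

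The converse ``$d_2\Rightarrow\SOT$'' is where the one genuine point lies: from $\|u_n-u\|_2\to 0$ one only gets $\|(u_n-u)\xi_k\|\to 0$ for the fixed vectors $\xi_k$, and this must be upgraded to $\|(u_n-u)\xi\|\to 0$ for every $\xi\in\Hi$. Here I would use faithfulness of $\tau$: let $\Hi_0$ be the closed linear span of $\{m'\xi_k:m'\in\M',\ k\in\N\}$. Since $\Hi_0$ is invariant under the self-adjoint algebra $\M'$, its orthogonal projection $p$ lies in $\M''=\M$; as $\xi_k\in\Hi_0$ we get $\tau(\id-p)=\sum_k\|(\id-p)\xi_k\|^2=0$, and since $\tau$ is faithful and $\id-p\ge 0$ this forces $p=\id$, so the vectors $m'\xi_k$ are total in $\Hi$. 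Granting this, for $m'\in\M'$ we have $\|(u_n-u)m'\xi_k\|=\|m'(u_n-u)\xi_k\|\le\|m'\|\,\|(u_n-u)\xi_k\|\to 0$, and since $\|u_n-u\|_\infty\le 2$ for all $n$ this extends by density to $\|(u_n-u)\xi\|\to 0$ for every $\xi\in\Hi$, i.e.\ $u_n\to u$ in $\SOT$. I expect the totality argument in this last paragraph to be the only step needing care; if one instead takes $\M$ in its standard form on $L^2(\M,\tau)$ it becomes transparent, since there $\|x\|_2=\|x\widehat{1}\|_2$ and $\|x\widehat{a}\|_2\le\|a\|_\infty\|x\|_2$, and the general case reduces to this one by quasi-equivalence of faithful normal representations.
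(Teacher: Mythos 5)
Your proof is correct, and it shares with the paper only the first reduction: both use Theorem \ref{basicvn}(3) to replace convergence in $d_2$ by convergence in the $\|\cdot\|_2$-norm. After that the paper simply cites Takesaki (Proposition 5.3, Chapter III), which says that on norm-bounded sets the strong operator topology agrees with the topology induced by $\|\cdot\|_2$, and applies it to the ball containing $U_{\M}$; your proposal instead reproves this bounded-set statement directly on $U_{\M}$. Your argument is sound: normality of $\tau$ gives the representation $\tau=\sum_k\omega_{\xi_k}$ with $\sum_k\|\xi_k\|^2<\infty$, hence $\|x\|_2^2=\sum_k\|x\xi_k\|^2$; the direction $\SOT\Rightarrow\|\cdot\|_2$ follows by dominated convergence using the uniform bound $\|u_n-u\|_\infty\le 2$; and the converse uses that $\{m'\xi_k:m'\in\M'\}$ is total, which you obtain correctly from the double commutant theorem (the projection onto the $\M'$-invariant span lies in $\M$) together with faithfulness of $\tau$, followed by the standard approximation argument again using the uniform norm bound. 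What the comparison buys: the paper's route is a two-line proof at the cost of an external reference, while yours is self-contained and makes explicit exactly where each hypothesis enters (normality for $\SOT\Rightarrow d_2$, faithfulness and the von Neumann algebra structure for $d_2\Rightarrow\SOT$, boundedness of unitaries for both). One small caveat: the paper states only that $\tau$ is finite and faithful, so your use of normality is an extra hypothesis on its face; but the lemma is false without it and the paper's citation of Takesaki requires it as well, so this is an implicit standing assumption rather than a gap in your argument.
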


\begin{proof}
By Theorem \ref{basicvn} (3) converge in $d_2$ is equivalent to convergence in the $\|\cdot\|_2$-norm. By in \cite[Proposition 5.3, Chapter III]{takesaki} in norm closed balls convergence in $\SOT$ is equivalent to convergence in the norm $\|\cdot\|_2$.
\end{proof}

\begin{prop}\label{ballvncg}
For $r<\pi/2$ and $u\in U_{\M}$ the ball $B_{U_{\M},\infty}[u,r]$ is closed and geodesic in $(U_{\M},d_2)$
\end{prop}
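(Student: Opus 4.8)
The statement to prove is Proposition \ref{ballvncg}: for $r<\pi/2$ and $u\in U_{\M}$, the ball $B_{U_{\M},\infty}[u,r]=B_\infty[u,r]\cap U_{\M}$ is closed and geodesic in $(U_{\M},d_2)$. As in the Hilbert--Schmidt case (Proposition \ref{ball2cg}), the proof splits into two parts: showing the ball is a geodesic subset, and showing it is $d_2$-closed. By left-invariance of both metrics I would assume $u=\id$ throughout.

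\emph{Geodesic.} Let $v,w\in B_{U_{\M},\infty}[\id,r]$. Since $d_\infty(v,w)\leq d_\infty(v,\id)+d_\infty(\id,w)\leq 2r<\pi$, Theorem \ref{basicvn}(1)--(2) guarantees a unique short geodesic $\gamma_{v,w}(t)=v\exp(t\log(v^{-1}w))$ inside $U_{\M}$ (the functional calculus element $\log(v^{-1}v^{-1}w)$ stays in $\M_{ah}$ by stability of the functional calculus, exactly as in the remark following Proposition \ref{contendf}). Now Theorem \ref{convfull} — applied in the ambient full unitary group $U$, which contains $U_{\M}$ — says $t\mapsto d_\infty(\gamma_{v,w}(t),\id)$ is convex, hence bounded by $\max\{d_\infty(v,\id),d_\infty(w,\id)\}\le r$. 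Therefore $\gamma_{v,w}(t)\in B_\infty[\id,r]\cap U_{\M}=B_{U_{\M},\infty}[\id,r]$ for all $t$, which is the geodesic property.

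\emph{Closed.} Suppose $(v_n)_n\subseteq B_{U_{\M},\infty}[\id,r]$ with $v_n\to v$ in $d_2$. By Theorem \ref{basicvn}(3) this is convergence in $\|\cdot\|_2$, and by Lemma \ref{sotd2} it is also $\SOT$-convergence in $U_{\M}$. Since $B_\infty[\id,r]=\{u\in U:u+u^*\geq 2\cos(r)\id\}$ by Lemma \ref{bolpos}, and this set is $\SOT$-closed by (the proof of) Proposition \ref{ballcwot} — using $\SOT$-continuity of the adjoint on unitaries and of addition — the $\SOT$-limit $v$ satisfies $v+v^*\geq 2\cos(r)\id$, so $v\in B_\infty[\id,r]$. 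As $v\in U_{\M}$ already, $v\in B_{U_{\M},\infty}[\id,r]$, proving closedness.

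\emph{Main obstacle.} None of the steps is genuinely hard; the only point requiring care is the interface between the three topologies on $U_{\M}$ ($\|\cdot\|_2$, $\SOT$, and — implicitly, for the ball description — the uniform topology). Specifically, one must be sure that the $\SOT$-closedness argument of Proposition \ref{ballcwot}, which is stated for the full group $U$ in the operator-norm ball language, transfers: a $d_2$-convergent sequence in $U_{\M}$ is $\SOT$-convergent (Lemma \ref{sotd2}, invoking that $d_2$-balls of $U_{\M}$ sit in norm-bounded sets so Proposition 5.3, Chapter III of \cite{takesaki} applies), and $\SOT$-limits of elements of $U_{\M}$ stay in $U_{\M}$ since $\M$ is $\SOT$-closed. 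Once this bookkeeping is in place the result follows immediately from Theorem \ref{convfull}, Lemma \ref{bolpos}, Proposition \ref{ballcwot}, Lemma \ref{sotd2}, and Theorem \ref{basicvn}.
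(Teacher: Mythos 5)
Your proposal is correct and follows essentially the same route as the paper: geodesic convexity via Theorem \ref{convfull} together with stability of the functional calculus (so $\gamma_{v,w}\subseteq U_{\M}$), and closedness via Lemma \ref{sotd2} combined with the $\SOT$-closedness of $B_\infty[\id,r]$ from Proposition \ref{ballcwot}. Aside from the harmless typo $\log(v^{-1}v^{-1}w)$ (which should read $\log(v^{-1}w)$), there is nothing to change.
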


\begin{proof}
We assume that $u=\id$. For if $u,v\in U_{\M}$ with $d_\infty(u,v)<\pi$ we have $\gamma_{u,v}\subseteq U_{\M}$ by properties of the functional calculus. The convexity of the $d_\infty$ distance asserted in Theorem \ref{convfull} implies the geodesic convexity of $B_{U_{\M},\infty}[u,r]$. 
The fact that the ball it is closed in $(U_{\M},d_2)$ is proved as follows. Assume that $(u_n)_n\subseteq B_{U_{\M},\infty}[\id,r]$ is a sequence such that $u_n\to u\in U_{\M}$ in the $d_2$ metric. By Lemma \ref{sotd2} we have $u_n\to u$ in $\SOT$ and by Proposition \ref{ballcwot} the ball $B_\infty[\id,r]$ is $\SOT$ closed, hence $u\in B_\infty[\id,r]$. 
\end{proof}

\begin{prop}\label{convclvn}
For $r< \pi/2$, $u\in U_{\M}$ and a subset $A\subseteq B_{U_{\M} ,\infty}[u,r]$ the set 
$$\overline{\conv(A)}^{d_2}$$ 
is closed and geodesic in $(U_{\M},d_2)$.
\end{prop}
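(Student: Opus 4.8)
The plan is to follow the scheme of Proposition~\ref{convcl2cg}, with the diffeomorphism argument there (which relied on the Banach--manifold structure of $U_2$) replaced by the $\SOT$-continuity of the geodesic map from Proposition~\ref{contendsot} together with the equivalence of $\SOT$- and $d_2$-convergence on $U_{\M}$ from Lemma~\ref{sotd2}. The set $\overline{\conv(A)}^{d_2}$ is $d_2$-closed by construction, so only geodesic convexity has to be checked.

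First I would locate the convex hull. Since $A\subseteq B_{U_{\M},\infty}[u,r]\subseteq B_\infty[u,r]$, the geodesic convex hull $\conv(A)$ is still contained in $B_\infty[u,r]$ (this is exactly the observation accompanying the construction of $\conv$ above, using the convexity of $d_\infty(\cdot,u)$ from Theorem~\ref{convfull}), and it is contained in $U_{\M}$ because $U_{\M}$ is a geodesic subset of $U$ (see the proof of Proposition~\ref{ballvncg}). Hence $\conv(A)\subseteq B_{U_{\M},\infty}[u,r]$, and since this ball is $d_2$-closed by Proposition~\ref{ballvncg}, also $\overline{\conv(A)}^{d_2}\subseteq B_{U_{\M},\infty}[u,r]$; in particular every point of $\overline{\conv(A)}^{d_2}$ is at $d_\infty$-distance $\le r<\pi/2$ from $u$.

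Next, given $p,q\in\overline{\conv(A)}^{d_2}$ with $d_\infty(p,q)<\pi$, I would pick sequences $(p_n)_n,(q_n)_n\subseteq\conv(A)$ with $p_n\to p$ and $q_n\to q$ in $d_2$. By the triangle inequality $d_\infty(p_n,q_n)\le 2r<\pi$, so $\gamma_{p_n,q_n}$ is the unique short geodesic, and it lies in $\conv(A)$ because $\conv(A)$ is a geodesic subset. By Lemma~\ref{sotd2} we also have $p_n\to p$ and $q_n\to q$ in $\SOT$, so Proposition~\ref{contendsot}, applied on $B_\infty[u,r]\times B_\infty[u,r]$, gives $\gamma_{p_n,q_n}(t)\to\gamma_{p,q}(t)$ in $\SOT$ for each fixed $t\in[0,1]$. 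All these points remain in $U_{\M}$, so a second appeal to Lemma~\ref{sotd2} upgrades this to $d_2$-convergence, whence $\gamma_{p,q}(t)\in\overline{\conv(A)}^{d_2}$. As $t\in[0,1]$ was arbitrary, $\gamma_{p,q}\subseteq\overline{\conv(A)}^{d_2}$, so the set is geodesic.

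The only delicate point, and the step I expect to be the main obstacle, is the shuttle between the strong operator topology and the $d_2$ metric: Proposition~\ref{contendsot} only provides $\SOT$-convergence of the geodesics, and in an infinite von Neumann algebra $\SOT$ is strictly coarser than $\|\cdot\|_2$, so Lemma~\ref{sotd2} can be invoked only once one knows that the limit geodesic already lies in $U_{\M}$ and that all the terms stay inside the norm-bounded ball $B_{U_{\M},\infty}[u,r]$. Both facts are guaranteed by the containment established in the first step, and with them the argument closes.
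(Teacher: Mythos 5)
Your proposal is correct and follows essentially the same route as the paper: geodesic convexity of the $d_2$-closure is obtained from the $\SOT$-continuity of the geodesic map (Proposition \ref{contendsot}) combined with the equivalence of $\SOT$- and $d_2$-convergence in $U_{\M}$ (Lemma \ref{sotd2}). The only difference is that you spell out the containment $\conv(A)\subseteq B_{U_{\M},\infty}[u,r]$ needed to invoke Proposition \ref{contendsot}, a step the paper leaves implicit.
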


\begin{proof}
By definition the set is closed. Let $(u_n)_n,(v_n)_n\subseteq \conv(A)$ such that $u_n\to u$ and $v_n\to v$ in the $d_2$ metric, which is equivalent to convergence in $\SOT$ by Lemma \ref{sotd2}. By Proposition \ref{contendsot} for a fixed $t\in [0,1]$ the geodesic is $\SOT$ continuous in its endpoints, hence $\gamma_{u_n,v_n}(t)\to \gamma_{u,v}(t)$ in the $d_2$ metric.
\end{proof}

For a finite von Neumann algebra $\M$ and an $s\in [0,1]$ we consider the Grassmannian $\Gr_{\M,s}$ of projections $p\in \M$ such that $\tau(p)=s$. We identify the space of projections with a space of symmetries via the map
$$p=e_p\mapsto \id-2p.$$
Each $e_p$ is a unitary operator so that an injection $e:\Gr_{\M,s}\to U_{\M}$ is defined. We have
$$e(\Gr_{\M,s})=\{u\in U_{\M}:u=u^*\mbox{  and  }\tau (u)=1-2s\}.$$

\begin{prop}\label{grassvn}
The set $e(\Gr_{\M,s})$ is a closed and geodesic subset of $(U_{\M},d_2)$.
\end{prop}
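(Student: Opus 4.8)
The plan is to mimic the proof of Proposition \ref{grasshilb} for the Hilbert-Schmidt Grassmannian, replacing the Hilbert-Schmidt norm by the $d_2$ metric of $U_{\M}$ and using the $\SOT$/$d_2$ equivalence from Lemma \ref{sotd2} throughout. First I would prove that $e(\Gr_{\M,s})$ is geodesic. Take two symmetries $e_p,e_q\in e(\Gr_{\M,s})$ with $d_\infty(e_p,e_q)<\pi$. By Corollary \ref{geoconvref} the unique geodesic joining them consists of symmetries, and it has the explicit form $\gamma(t)=e_pe^{tx}=e^{-\frac12 tx}e_pe^{\frac12 tx}$, i.e. it is conjugation of $e_p$ by the one-parameter group $e^{-\frac12 tx}\subseteq U_{\M}$. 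Since $\tau$ is invariant under conjugation by unitaries in $\M$, the quantity $\tau(\gamma(t))=\tau(e_p)=1-2s$ is constant in $t$, and each $\gamma(t)$ is a self-adjoint unitary; hence $\gamma(t)\in e(\Gr_{\M,s})$ for all $t\in[0,1]$, so the set is geodesic.

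Next I would prove closedness in $(U_{\M},d_2)$. Let $(e_{p_n})_n\subseteq e(\Gr_{\M,s})$ with $e_{p_n}\to u$ in the $d_2$ metric. By the comparison $\|e_{p_n}-u\|_2\leq d_2(e_{p_n},u)$ from Theorem \ref{basicvn} (3) we get $e_{p_n}\to u$ in $\|\cdot\|_2$. The self-adjointness passes to the limit (the adjoint is $\|\cdot\|_2$-continuous on a norm-bounded set, or one uses $\SOT$-continuity of the adjoint on $U_{\M}$ as in Proposition \ref{ballcwot} together with Lemma \ref{sotd2}), so $u=u^*$; and since $u$ is a $\|\cdot\|_2$-limit of unitaries in the bounded set $U_{\M}$ it is again a unitary in $\M$. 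Thus $q:=\frac12(\id-u)$ is a self-adjoint element of $\M$ with $u=\id-2q$; from $u^2=\id$ one checks $q^2=q$, so $q$ is a projection in $\M$. Finally $\tau$ is $\|\cdot\|_2$-continuous (indeed $|\tau(a)-\tau(b)|=|\tau(a-b)|\leq\|a-b\|_1\leq\|a-b\|_2$ by Cauchy-Schwarz), so $\tau(u)=\lim\tau(e_{p_n})=1-2s$, whence $\tau(q)=s$ and $u=e_q\in e(\Gr_{\M,s})$.

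The main obstacle, compared with the Hilbert-Schmidt case, is that $U_{\M}$ is not $\|\cdot\|_\infty$-closed, so one cannot conclude $u$ is unitary from uniform convergence of $p_n$; instead one must argue entirely with $\|\cdot\|_2$ (or $\SOT$) and exploit that $U_{\M}$ sits inside the norm-bounded, $\SOT$-closed set of contractions of $\M$, using that $\M$ is a von Neumann algebra and hence $\SOT$-closed in $\B(\Hi)$, so limits of the $e_{p_n}$ stay in $\M$. The algebraic identities $u^*=u$ and $u^2=\id$ are preserved by $\SOT$-limits on the bounded set $U_{\M}$ (multiplication is jointly $\SOT$-continuous on bounded sets and the adjoint is $\SOT$-continuous on $U_{\M}$), which is what lets one recover that $q=\frac12(\id-u)$ is a genuine projection in $\M$. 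Everything else is a routine transcription of the proof of Proposition \ref{grasshilb}, with $\Tr$ replaced by $\tau$ and the dimension condition $\Tr(\id-u)=2m$ replaced by the trace condition $\tau(u)=1-2s$.
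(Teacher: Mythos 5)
Your proof is correct and follows essentially the same route as the paper: the geodesic part is identical (Corollary \ref{geoconvref} plus invariance of $\tau$ under conjugation), and your closedness argument is just a detailed write-out of the paper's own alternative remark that closedness follows from Lemma \ref{sotd2} together with the $d_2$/$\SOT$ continuity of $\tau$ (the paper's primary citation is to \cite{estebanc}). Your extra verification that the limit is a unitary in $\M$ is harmless but redundant, since closedness is asserted within $(U_{\M},d_2)$, so the limit is assumed to lie in $U_{\M}$; also note the bound $\|a-b\|_1\leq\|a-b\|_2$ needs the factor $\tau(\id)^{1/2}$ if $\tau$ is not normalized, which does not affect the argument.
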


\begin{proof}
Let $p$ and $q$ be projections such that $d_\infty(e_p,e_q)<\pi$. By Corollary \ref{geoconvref} there is a geodesic $\gamma(t)=e_pe^{tx}$ in $U_{\M}$ joining $e_p$ and $e_q=ue^x$ which is given by conjugation, that is, $\gamma(t)=e_pe^{tx}= e^{-\frac{1}{2} tx}e_pe^{\frac{1}{2} tx}$. Since the trace is invariant by conjugation we have $\tau(\gamma(t))=1-2s$ for $t\in [0,1]$. The Grassmannian is closed by \cite[Proposition 2.3]{estebanc}, this also follows from Lemma \ref{sotd2} and the continuity in $\SOT$ of the trace $\tau$.
\end{proof}

Let $\Nv \subseteq \M$ be an inclusion of finite von Neumann algebras, and let $U_{\Nv}$ be the unitary group of $\Nv$.  

\begin{prop}\label{ortvn}
The group $U_{\Nv}$ is a closed and geodesic subspace $(U_{\M},d_2)$. 
\end{prop}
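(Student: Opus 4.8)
The plan is to mimic the structure of the preceding propositions about closed geodesic subsets. First I would reduce, as usual, to the case where one endpoint of a geodesic is the identity: if $u, ue^{x} \in U_{\Nv}$ with $\|x\|_\infty < \pi$, I want to show the short geodesic $ue^{tx}$ lies in $U_{\Nv}$ for all $t \in [0,1]$. Since $u \in U_{\Nv}$ and $\Nv$ is a von Neumann algebra, it is enough to prove that $e^{x} \in U_{\Nv}$ implies $x \in \Nv_{ah}$; then $e^{tx} \in U_{\Nv}$ follows immediately by the functional calculus (norm-convergent power series of elements of the subalgebra), and multiplication by $u$ keeps us in $U_{\Nv}$.

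The key geodesic step is thus: if $e^{x} \in U_{\Nv}$ with $\|x\|_\infty < \pi$, then $x = -i\log(e^{x})$ where $\log$ is the principal branch, and since the spectrum of $e^{x}$ lies in $\exp(i(-\pi,\pi))$, the principal logarithm is a continuous function on that spectral set which can be uniformly approximated by polynomials $p(z,\bar z) = p(z, z^{-1})$ on the unit circle. Each $p(e^{x}, e^{-x})$ lies in $\Nv$ because $\Nv$ is a $*$-subalgebra, and $\Nv$ is norm-closed (being a von Neumann algebra), so the norm limit $x \in \Nv$; being skew-adjoint, $x \in \Nv_{ah}$. This is essentially the ``stability of the functional calculus'' argument already invoked several times in the excerpt (e.g.\ in Proposition \ref{contendf} and in the remark following it), so I would just cite that reasoning.

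For closedness in $(U_{\Nv}, d_2)$: a sequence $(u_n)_n \subseteq U_{\Nv}$ converging to $u \in U_{\M}$ in $d_2$ converges in $\|\cdot\|_2$ by Theorem \ref{basicvn}(3), hence in $\SOT$ by Lemma \ref{sotd2} (or directly, $\SOT$ convergence on a bounded set follows from $\|\cdot\|_2$ convergence). Since $\Nv$ is a von Neumann algebra it is $\SOT$-closed in $\M$, so $u \in \Nv$; being unitary, $u \in U_{\Nv}$. Thus $U_{\Nv}$ is $d_2$-closed.

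I do not expect any serious obstacle here — the proposition is of the same routine flavor as Propositions \ref{ortvn}-adjacent ones, and every ingredient (functional-calculus stability, norm/von Neumann closedness, equivalence of $d_2$ and $\SOT$ on bounded sets) is already in place. The only point to state carefully is that the principal logarithm is legitimately applied, which is exactly where the hypothesis $d_\infty(u,v) < \pi$ (equivalently $\|x\|_\infty < \pi$) is used; I would phrase the proof to make that dependence explicit.

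\begin{proof}
Since $U_{\Nv} = U_{\M} \cap U_{\A}$ for the $C^{*}$-subalgebra $\A = \Nv$, and the $d_\infty$ distance in a unitary group of a $C^{*}$-subalgebra agrees with that of $U$ (as recalled after Proposition \ref{contendf}), the set $U_{\Nv}$ is geodesic: if $u, v \in U_{\Nv}$ with $d_\infty(u,v) < \pi$ then $\gamma_{u,v}(t) = u\exp(t\log(u^{-1}v))$, and $\log(u^{-1}v) \in \Nv_{ah}$ by stability of the continuous functional calculus (the principal logarithm is defined on $\spec(u^{-1}v) \subseteq \exp(i(-\pi,\pi))$ and is a norm-limit of polynomials in $u^{-1}v$ and $(u^{-1}v)^{*}$, which lie in the norm-closed $*$-subalgebra $\Nv$), hence $\exp(t\log(u^{-1}v)) \in U_{\Nv}$ and $\gamma_{u,v} \subseteq U_{\Nv}$.

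To see that $U_{\Nv}$ is closed in $(U_{\M}, d_2)$, let $(u_n)_n \subseteq U_{\Nv}$ with $u_n \to u \in U_{\M}$ in $d_2$. By Theorem \ref{basicvn}(3) we have $u_n \to u$ in $\|\cdot\|_2$, and by Lemma \ref{sotd2} this gives $u_n \to u$ in $\SOT$. Since $\Nv$ is a von Neumann algebra it is $\SOT$-closed in $\M$, so $u \in \Nv$, and therefore $u \in U_{\M} \cap \Nv = U_{\Nv}$.
\end{proof}
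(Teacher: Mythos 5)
Your proof is correct and follows essentially the same route as the paper: geodesicity comes from the stability of the functional calculus for the $C^*$-subalgebra $\Nv$ (the remark after Proposition \ref{contendf}), and closedness comes from Lemma \ref{sotd2} together with the $\SOT$-closedness of the von Neumann algebra $\Nv$. You merely spell out the functional-calculus step in more detail than the paper, which simply cites it.
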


\begin{proof}
The set $U_{\Nv}$ is the intersection of two geodesic spaces. The second part follows from Lemma \ref{sotd2} and the fact that $\Nv$ is $\SOT$ closed, since it is a von Neumann algebra.
\end{proof}

Let $\M\subseteq \B(\Hi)$ be a finite von Neumann algebra and $J:\Hi\to\Hi$ an isometric anti-linear involution such that $\tau(JxJ)=\overline{\tau(x)}$ for all $x\in \M$. We define the orthogonal group
$$O_{\M,J}=\{u\in U_{\M}:u=JuJ\}=U_{\M}\cap O_J.$$

\begin{prop}\label{ortogonalvn}
The orthogonal group $O_{\M,J}$ is a closed and geodesic subspace of $(U_{\M},d_2)$. 
\end{prop}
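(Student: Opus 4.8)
The plan is to follow the same two-step pattern used for the Hilbert--Schmidt orthogonal group in Proposition \ref{ortogonal} and for $U_{\Nv}$ in Proposition \ref{ortvn}, namely first geodesicity and then closedness. For geodesicity I would write $O_{\M,J}=U_{\M}\cap O_J$ and invoke that each factor is a geodesic subset of $U$: the set $O_J$ is geodesic by Proposition \ref{ortogonalfull}, while $U_{\M}$ is geodesic because for $u,v\in U_{\M}$ with $d_\infty(u,v)<\pi$ the geodesic $\gamma_{u,v}$ remains in $U_{\M}$ by stability of the functional calculus, as recalled in Section \ref{prel}. Hence for $u,v\in O_{\M,J}$ with $d_\infty(u,v)<\pi$ the unique geodesic $\gamma_{u,v}$ of $U$ lies in both $U_{\M}$ and $O_J$, so it lies in $O_{\M,J}$.

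For closedness in $(U_{\M},d_2)$, I would first record that the map $\sigma\colon\M\to\M$, $\sigma(x)=JxJ$ (well defined since the standing hypothesis $\tau(JxJ)=\overline{\tau(x)}$ presupposes $J\M J=\M$), is anti-linear with $\sigma(x)^*=Jx^*J=\sigma(x^*)$, so that
$$\tau(\sigma(x)^*\sigma(x))=\tau(Jx^*xJ)=\overline{\tau(x^*x)}=\tau(x^*x)$$
by the compatibility hypothesis and the positivity of $\tau(x^*x)$; thus $\|\sigma(x)\|_2=\|x\|_2$, i.e. $\sigma$ is $\|\cdot\|_2$-isometric. Now if $(u_n)_n\subseteq O_{\M,J}$ and $u_n\to u$ in $(U_{\M},d_2)$, then $\|u_n-u\|_2\to0$ by Theorem \ref{basicvn}(3), hence $\|\sigma(u_n)-\sigma(u)\|_2=\|u_n-u\|_2\to0$; since $\sigma(u_n)=u_n$, uniqueness of limits gives $\|u-JuJ\|_2=0$, so $u=JuJ$, and as $u\in U_{\M}$ we conclude $u\in O_{\M,J}$. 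Alternatively, using Lemma \ref{sotd2} one may replace $\|\cdot\|_2$-convergence by $\SOT$-convergence throughout, noting that $x\mapsto JxJ$ is $\SOT$-continuous because $J$ is an isometric involution; this yields the same conclusion.

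I do not expect a substantial obstacle here: the statement is a direct analogue of Propositions \ref{ortogonal} and \ref{ortvn}. The only points requiring a line of verification are that geodesics between $\|\cdot\|_\infty$-close unitaries of $\M$ stay inside $\M$ (already used repeatedly in Section \ref{prel}) and that the hypothesis $\tau(JxJ)=\overline{\tau(x)}$ is precisely what makes $\sigma$ isometric for $\|\cdot\|_2$, equivalently keeps the fixed-point set of $\sigma$ closed for the relevant topology.
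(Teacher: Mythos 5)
Your proof is correct and follows essentially the same route as the paper: geodesicity from the intersection $O_{\M,J}=U_{\M}\cap O_J$ of two geodesic sets, and closedness from the $\|\cdot\|_2$-isometry of $x\mapsto JxJ$ together with the equivalence of $d_2$- and $\|\cdot\|_2$-convergence. Your derivation of $\|JxJ\|_2=\|x\|_2$ from the hypothesis $\tau(JxJ)=\overline{\tau(x)}$ merely fills in a detail the paper states without proof.
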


\begin{proof}
Since $O_J$ and $U_{\M}$ are geodesic the intersection is geodesic. From $\|JxJ\|_2=\|x\|_2$ for $x\in \M_{ah}$ and from the equivalence between convergence in $d_2$ and $\|\cdot\|_2$ we conclude that the orthogonal group is closed.
\end{proof}

\begin{rem}
The real Grassmannian $e(\Gr_{\M,s})\cap O_{\M,J}$ is the intersection of closed geodesic spaces so it is closed and geodesic.
\end{rem}

For a group $H$ and two homomorphism $\phi, \rho:H\to U_{\M}$ consider the action of Proposition \ref{fixgeodesic} and denote by $U_{\M}^H$ the fixed point set. Note that $U_{\M}^H=U_{\M}\cap U^H$.

\begin{prop}
Consider a fixed point set $U_{\M}^H$ as above. Then it is closed geodesic subspace of $(U_{\M},d_2)$.
\end{prop}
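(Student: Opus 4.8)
The plan is to transcribe into the setting of $(U_{\M},d_2)$ the argument already used for the Hilbert--Schmidt fixed point set $U_2^H$. Two things need to be checked: that $U_{\M}^H$ is a geodesic subset of $U$ in the sense of Definition \ref{defgeods}, and that it is closed in the $d_2$ metric.

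For the geodesic property I would write $U_{\M}^H=U_{\M}\cap U^H$ and observe that both factors are geodesic subsets of $U$. Indeed, $U^H$ is geodesic by Proposition \ref{fixgeodesic}, while $U_{\M}$ is geodesic because for $u,v\in U_{\M}$ with $d_\infty(u,v)<\pi$ the element $u^{-1}v$ has spectrum contained in $\exp(i(-\pi,\pi))$, so $\log(u^{-1}v)\in\M_{ah}$ and hence $\gamma_{u,v}(t)=u\exp(t\log(u^{-1}v))\in U_{\M}$ for all $t\in[0,1]$; this is the functional-calculus stability already invoked in Propositions \ref{ballvncg} and \ref{ortvn}. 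Since an intersection of geodesic subsets of $U$ is again geodesic, $U_{\M}^H$ is geodesic.

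For closedness I would use Lemma \ref{sotd2} (equivalently Theorem \ref{basicvn}(3)), which gives that on $U_{\M}$ convergence in $d_2$ coincides with convergence in the norm $\|\cdot\|_2$, together with the fact that each map $u\mapsto\phi(h)u\rho(h)^{-1}$ is a $\|\cdot\|_2$-isometry of $U_{\M}$: using $(\rho(h)^{-1})^*=\rho(h)$, the trace identity $\tau(xy)=\tau(yx)$ and $\phi(h)^*\phi(h)=\rho(h)^{-1}\rho(h)=\id$,
\[
\|\phi(h)u\rho(h)^{-1}-\phi(h)v\rho(h)^{-1}\|_2^2=\tau\big(\rho(h)(u-v)^*\phi(h)^*\phi(h)(u-v)\rho(h)^{-1}\big)=\tau\big((u-v)^*(u-v)\big)=\|u-v\|_2^2.
\]
Then, if $(u_n)_n\subseteq U_{\M}^H$ and $u_n\to u$ in $d_2$, for every $h\in H$ we have $\phi(h)u_n\rho(h)^{-1}=u_n\to u$ and simultaneously $\phi(h)u_n\rho(h)^{-1}\to\phi(h)u\rho(h)^{-1}$ in $d_2$; by uniqueness of limits $\phi(h)u\rho(h)^{-1}=u$ for all $h$, that is, $u\in U_{\M}^H$.

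I do not anticipate any genuine obstacle: the statement is a direct analogue of the $U_2^H$ case, and the only point requiring a little care is the identification of $d_2$-convergence with $\|\cdot\|_2$-convergence, which is exactly the content of Lemma \ref{sotd2}.
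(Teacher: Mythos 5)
Your argument is correct and essentially the paper's: the geodesic part is the same intersection argument (Proposition \ref{fixgeodesic} for $U^H$ plus functional-calculus stability of $U_{\M}$), and the closedness part works as you say. The only cosmetic difference is that the paper verifies closedness through the $\SOT$ (via Lemma \ref{sotd2}, that $d_2$-convergence in $U_{\M}$ is equivalent to $\SOT$-convergence, together with $\SOT$-continuity of $u\mapsto\phi(h)u\rho(h)^{-1}$), whereas you use that this map is a $\|\cdot\|_2$-isometry and that $d_2$-convergence coincides with $\|\cdot\|_2$-convergence, which is exactly how the paper treats the analogous $U_2^H$ case.
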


\begin{proof}
The fixed point set is geodesic since by Proposition \ref{fixgeodesic} $U_{\M}^H=U_{\M}\cap U^H$ is the intersection of two geodesic sets. Converge in the $d_2$ metric is equivalent to convergence in $\SOT$, therefore for all $h\in H$ if $u_n\to u$ in $\SOT$ then $\phi(h)u_n\rho(h)^{-1}\to \phi(h)u\rho(h)^{-1}$ in $\SOT$. we conclude that if $u_n\in U_2^H$ and $u_n\to u$ in $(U,d_2)$ then $u\in U_2^H$.
\end{proof}

\subsection{Special unitary group in finite dimensions}\label{sectsu}

The finite dimensional special unitary group is an example where the length parameter is less than $\pi$. We set
$$SU(\C^n)=\{u\in \C^{n\times n}:u\mbox{  is unitary and  }\det(u)=1\}.$$

\begin{prop}\label{specialunit}
The special unitary group $SU(\C^n)$ is a geodesic subspace of the unitary group $U(\C^n)$ acting on $\C^n$ and it has length parameter $\min\{\frac{2\pi}{n},\pi\}$. 
\end{prop}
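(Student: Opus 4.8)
The plan is to reduce to the case where one of the two endpoints is the identity, diagonalise, and extract the determinant constraint on the principal logarithm; the whole statement then comes down to a single elementary inequality.

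First I would use left translation. For $u,v\in SU(\C^n)$ the map of translation by $u^{-1}$ is a $d_\infty$-isometry of $U(\C^n)$ that carries $SU(\C^n)$ onto itself (since $\det u^{-1}=1$) and carries $\gamma_{u,v}$ to $\gamma_{\id,\,u^{-1}v}$; moreover $\det(u^{-1}v)=1$ and $d_\infty(\id,u^{-1}v)=d_\infty(u,v)$. Hence it suffices to show: if $v\in SU(\C^n)$ and $d_\infty(\id,v)<l:=\min\{2\pi/n,\pi\}$, then $\gamma_{\id,v}(t)\in SU(\C^n)$ for every $t\in[0,1]$.

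Next I would diagonalise. Since $d_\infty(\id,v)<\pi$, Proposition \ref{basicof}(1) lets me write $v=e^{x}$ with $x$ the principal (skew-Hermitian) logarithm of $v$ and $\|x\|_\infty=d_\infty(\id,v)<l$, so that $\gamma_{\id,v}(t)=e^{tx}$. Diagonalising $x$, its eigenvalues are $i\theta_1,\dots,i\theta_n$ with real $\theta_j$ and $|\theta_j|\le\|x\|_\infty<l$. The hypothesis $\det v=1$ says $e^{\Tr x}=1$, i.e.\ $\theta_1+\cdots+\theta_n\in 2\pi\Z$; but $|\theta_1+\cdots+\theta_n|\le n\|x\|_\infty<nl\le 2\pi$, which forces $\theta_1+\cdots+\theta_n=0$, that is $\Tr x=0$. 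Consequently $\gamma_{\id,v}(t)=e^{tx}$ is unitary with $\det e^{tx}=e^{t\Tr x}=1$, so it lies in $SU(\C^n)$, which is the required inclusion. (When $n\le 2$ one has $l=\pi$ and the same computation shows $SU(\C^n)$ is a geodesic subset in the unrestricted sense of the first part of Definition \ref{defgeods}.)

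Finally I would check that the length parameter $2\pi/n$ cannot be enlarged when $n\ge 3$ — this is the only place where the precise value is visibly forced. Taking $v=e^{2\pi i/n}\,\id\in SU(\C^n)$, all eigenvalues of $v$ equal $e^{2\pi i/n}$ with $2\pi/n<\pi$, so $d_\infty(\id,v)=2\pi/n$, whereas $\gamma_{\id,v}(t)=e^{2\pi i t/n}\,\id$ has determinant $e^{2\pi i t}\neq 1$ for $t\in(0,1)$; hence no $l'>2\pi/n$ is a length parameter. I do not expect a genuine obstacle here: the mathematical content is entirely the inequality $|\Tr(\log v)|<2\pi$ forcing $\Tr(\log v)=0$, and the only care needed is the bookkeeping that identifies $\|\log v\|_\infty$ with $d_\infty(\id,v)$ and separates the cases $n\le 2$ and $n\ge 3$.
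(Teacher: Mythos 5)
Your proof is correct and follows essentially the same route as the paper: reduce to a geodesic $e^{tx}$ with $x$ the principal logarithm, note $\det v=1$ forces $\Tr(x)\in 2\pi i\Z$, and use $|\Tr(x)|\le n\|x\|_\infty<2\pi$ to conclude $\Tr(x)=0$, hence $\det(e^{tx})\equiv 1$. The extra sharpness check with $v=e^{2\pi i/n}\id$ is not required by the statement but is a valid observation.
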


\begin{proof}
Let $u,v\in SU(\C^n)$ such that $d_\infty(u,v)<\min\{\frac{2\pi}{n},\pi\}$. Then there is a geodesic $\gamma:[0,1]\to U$ given by $ue^{tx}$ which joins $u$ and $v=ue^x$. Note that $\|x\|_\infty=d_\infty (u,v)<\frac{2\pi}{n}$ and 
$$1=\det(v)=\det(ue^x)=\det(u)\det(e^x)=e^{\Tr(x)}.$$ 
Since $\Tr(x)\in 2\pi i\Z$ and $\|x\|_\infty <\frac{2\pi}{n}$ we see that $\Tr(x)=0$. Hence $\det(ue^{tx})=\det(u)\det(e^{tx})=e^{\Tr(tx)}=1$ for all $t\in [0,1]$ and the proof is complete.
\end{proof}

\section{Fixed point properties of group actions}\label{scirc}

Complete geodesic spaces with $\lambda$-convex squares of distance functions have well defined circumcenters for its bounded subsets, see Proposition 9.2.24 in \cite{metric}. In the context of this article we first define a set $C$ of approximate circumcenters of a set $A$ in the $d_\infty$ metric, and in this set $C$ we find a minimizer of the strongly convex function $f_A(u)=\sup_{a\in A}d_2(u,a)^2$. Therefore we are able to prove fixed point results with optimal bounds on the $d_\infty$ distance, which are much stronger than $d_2$ bounds. We start with the following definition.

\begin{defn}
Given a geodesic subspace $M$ of $U$ and a subset $A\subseteq M$ we define its $d_\infty$-circumradius relative to $M$ as 
$$\radius_M(A) = \inf\{r:\mbox{there is } c\in M \mbox{ such that } A\subseteq B_{\infty}[c,r]\}.$$
\end{defn}

\subsection{Optimal radius for existence of fixed point}

We prove the existence of minimizers of $f_A$ in sets $C$ for subsets of $d_\infty$-circumradius less than $\pi/2$. We first start with the case of the Hilbert-Schmidt group $U_2$. The following definition is similar to \cite[Definition 9.2.17]{metric}.

\begin{defn}\label{deflamconv2}
Let $M\subseteq U_2$ be a closed geodesic space. A function $f:M\to \R$ is $\lambda$-convex for a $\lambda >0$ if for any unit-speed geodesic $\gamma$ in $(M,d_2)$ the function
$t\mapsto f(\gamma(t))-\lambda t^2$ is convex.
\end{defn}

\begin{lem}\label{supremum}
Let $M\subseteq U_2$ be a closed geodesic set, and let $\{f_i\}_{i\in I}$ be a family of $\lambda$-convex functions on $M$ such that for a $c\in\R$ and for all $i\in I$ we have $f_i\leq c$. Then $\sup\{f_i\}_{i\in I}$ is $\lambda$-convex. 
\end{lem}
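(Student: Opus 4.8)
The plan is to verify directly that $g := \sup_{i\in I} f_i$ is a well-defined real-valued function and that $t \mapsto g(\gamma(t)) - \lambda t^2$ is convex along every unit-speed geodesic $\gamma$ in $(M, d_2)$. Since each $f_i \leq c$, the pointwise supremum $g(u) = \sup_i f_i(u)$ is bounded above by $c$ at every point, so $g$ is a well-defined function $M \to \R \cup \{-\infty\}$; to be safe one should remark that if $I$ is nonempty then $g > -\infty$ (if $I = \emptyset$ the statement is vacuous or trivial, so assume $I \neq \emptyset$). The key point is the elementary fact that a pointwise supremum of a family of convex functions on an interval is convex, \emph{provided} the supremum is finite everywhere — and here the uniform bound $f_i \leq c$ guarantees exactly that.

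First I would fix a unit-speed geodesic $\gamma : [0,l] \to M$ in $(M,d_2)$; this makes sense because $M$ is a closed geodesic set. For each $i \in I$ define $h_i(t) = f_i(\gamma(t)) - \lambda t^2$ on $[0,l]$. By Definition \ref{deflamconv2}, each $h_i$ is convex on $[0,l]$. Then define $h(t) = \sup_{i \in I} h_i(t)$; since $f_i \leq c$ we have $h_i(t) \leq c - \lambda t^2 \leq c$ for all $i$ and all $t \in [0,l]$, so $h(t) \leq c$ and $h$ is finite-valued on $[0,l]$ (and $h(t) \geq h_{i_0}(t) > -\infty$ for any fixed $i_0 \in I$).

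Second, I would check convexity of $h$ by hand: for $t_0, t_1 \in [0,l]$ and $s \in [0,1]$, and for each $i$, convexity of $h_i$ gives $h_i((1-s)t_0 + s t_1) \leq (1-s) h_i(t_0) + s h_i(t_1) \leq (1-s) h(t_0) + s h(t_1)$; taking the supremum over $i$ on the left yields $h((1-s)t_0 + s t_1) \leq (1-s)h(t_0) + s h(t_1)$, which is exactly convexity of $h$. Finally, observe that $h(t) = \sup_i f_i(\gamma(t)) - \lambda t^2 = g(\gamma(t)) - \lambda t^2$, so $t \mapsto g(\gamma(t)) - \lambda t^2$ is convex along the arbitrary unit-speed geodesic $\gamma$; by Definition \ref{deflamconv2}, $g = \sup_i f_i$ is $\lambda$-convex, which is what we wanted.

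This argument is essentially routine, so I do not anticipate a genuine obstacle; the only subtlety worth flagging is the role of the uniform bound $f_i \leq c$, which is what prevents the supremum from being identically $+\infty$ and thereby keeps $h$ an honest finite convex function on $[0,l]$ — without it the "sup of convex is convex" step would fail to produce a real-valued function. One should also note in passing that the supremum being bounded above guarantees $g$ maps into $\R$ (not merely $[-\infty, +\infty]$), so that $\lambda$-convexity in the sense of Definition \ref{deflamconv2} applies verbatim.
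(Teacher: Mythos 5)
Your proof is correct and follows essentially the same route as the paper: pull back along a unit-speed geodesic, note each $f_i(\gamma(t))-\lambda t^2$ is convex, use the uniform bound to make the pointwise supremum finite, and invoke that a supremum of convex functions is convex. The only difference is that you spell out the elementary sup-of-convex verification, which the paper simply cites.
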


\begin{proof}
The bound $c$ implies that the supremum is well defined. For a unit speed geodesic $\gamma:[0,s]\to M$ and for all $i\in I$ the function $f_i(\gamma(t))-\lambda t^2$ is convex. Hence
$$\sup\{f_i(\gamma(t))\}_{i\in I}-\lambda t^2=\sup\{f_i(\gamma(t))-\lambda t^2\}_{i\in I}$$ 
is a convex function since it is the supremum of convex functions. 
\end{proof}

The last part of the next proof is similar to \cite[Proposition 9.2.20]{metric}.

\begin{thm}\label{thmcirc2}
Let $M$ be a closed geodesic subspace of $U_2$ and let $A\subseteq (M,d_2)$ be a bounded subset such that $\radius_M(A)<\pi/2$. For $r\in\R$ such that $\radius_M(A)<r<\pi/2$ and 
$$C=M\cap\left(\bigcap_{a\in A}B_\infty[a,r]\right)$$ 
the function $f_A:C\to \R$ given by 
$$f_A(u)=\sup_{a\in A}d_2(u,a)^2$$
has a unique minimizer.
\end{thm}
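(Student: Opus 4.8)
The plan is to show that $C$ is a nonempty closed geodesic subspace of $(U_2,d_2)$ on which $f_A$ is a strongly convex function that is coercive in an appropriate sense, and then to invoke the standard argument that a strongly convex lower semicontinuous function on a complete geodesic space with convex squared-distance attains a unique minimum. First I would check that $C$ is nonempty: by definition of $\radius_M(A)$ there is a $c\in M$ with $A\subseteq B_\infty[c,r']$ for some $r'<r$, hence $c\in C$. Next I would verify that $C$ is geodesic in $U_2$: for $a\in A$ the ball $B_{U_2,\infty}[a,r]$ is closed and geodesic in $(U_2,d_2)$ by Proposition \ref{ball2cg} (using that $r<\pi/2$), $M$ is geodesic by hypothesis, and an intersection of geodesic sets is geodesic; so for $u,v\in C$ with $d_\infty(u,v)<\pi$ — which holds automatically since $u,v\in B_\infty[a,r]$ for any fixed $a$, giving $d_\infty(u,v)\le 2r<\pi$ — the geodesic $\gamma_{u,v}$ stays in $C$. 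Closedness of $C$ in $(U_2,d_2)$ is again the intersection of the closed sets $B_{U_2,\infty}[a,r]$ and the closed set $M$; as a closed subset of the complete space $(U_2,d_2)$ it is itself complete.

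The convexity input comes from Theorem \ref{strong}: for each $a\in A$ and each unit-speed geodesic $\gamma$ in $C$, since $\gamma\subseteq C\subseteq B_\infty[a,r]$ with $r<\pi/2$, the function $t\mapsto d_2(a,\gamma(t))^2$ satisfies $f''\ge 2\lambda$ with $\lambda=\frac{\sin(2r)}{2r}>0$ (when the geodesic is constant the statement is trivial), i.e. each $u\mapsto d_2(a,u)^2$ is $\lambda$-convex on $C$ in the sense of Definition \ref{deflamconv2}. These functions are uniformly bounded on $C$: for $u\in C$ and $a\in A$, $d_2(a,u)\le\frac{\pi}{2}\,d_\infty(a,u)\le \pi r$ by Proposition \ref{basicof}(3) (or the relevant comparison), so $f_A\le (\pi r)^2$ on $C$. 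Hence Lemma \ref{supremum} applies and $f_A=\sup_{a\in A}d_2(a,\cdot)^2$ is $\lambda$-convex on $C$. In particular $f_A$ is continuous (a $\lambda$-convex function on a geodesic space is locally Lipschitz, being locally a sum of a convex function and a quadratic) and strictly convex along geodesics.

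Finally I would run the minimization argument, following \cite[Proposition 9.2.20]{metric}. Let $m=\inf_C f_A\ge 0$ and take a minimizing sequence $(u_n)$. For any $n,k$, apply $\lambda$-convexity of $f_A$ along the geodesic from $u_n$ to $u_k$ evaluated at the midpoint $p$: writing $\delta=d_2(u_n,u_k)$, parametrizing by arc length on $[0,\delta]$, the function $g(t)=f_A(\gamma(t))-\lambda(t-\delta/2)^2$ is convex, so $g(\delta/2)\le\frac12(g(0)+g(\delta))$, which rearranges to
\[
f_A(p)\;\le\;\tfrac12 f_A(u_n)+\tfrac12 f_A(u_k)-\tfrac{\lambda}{4}\,d_2(u_n,u_k)^2 .
\]
Since $p\in C$, $f_A(p)\ge m$, so $d_2(u_n,u_k)^2\le \frac{2}{\lambda}\big(f_A(u_n)+f_A(u_k)-2m\big)\to 0$; thus $(u_n)$ is Cauchy, hence converges to some $u^\ast\in C$ by completeness, and $f_A(u^\ast)=m$ by continuity. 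Uniqueness follows from the same midpoint inequality: if $f_A(u^\ast)=f_A(v^\ast)=m$ then $m\le f_A(p)\le m-\frac{\lambda}{4}d_2(u^\ast,v^\ast)^2$, forcing $u^\ast=v^\ast$.

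The main obstacle I anticipate is not any single hard estimate but rather making sure the chain of "closed and geodesic" facts is exactly as needed: one must be careful that the ambient geodesic uniqueness (Definition \ref{defgeod}, valid for $d_\infty<\pi$) transfers to $(U_2,d_2)$ so that "$\lambda$-convex along $d_2$-geodesics" in the sense of Definition \ref{deflamconv2} is genuinely what Theorem \ref{strong} delivers — i.e. that the $d_2$-geodesics in $C$ are precisely the $\gamma_{u,v}$ of Definition \ref{defgeod}, which is where Theorem \ref{geodesicasp}(1) and Proposition \ref{ball2cg} are used. Once that identification is in place the rest is the standard uniformly-convex minimization argument.
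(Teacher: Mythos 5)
Your overall route is the same as the paper's: show $C$ is nonempty, closed and geodesic (Proposition \ref{ball2cg} plus intersections), get $\lambda$-convexity of $f_A$ with $\lambda=\frac{\sin(2r)}{2r}$ from Theorem \ref{strong} and Lemma \ref{supremum}, and then run the standard uniform-convexity minimization (your minimizing-sequence/midpoint argument is just the Cauchy-sequence version of the paper's nested sublevel sets $C_{r_n}$ with diameters tending to zero). However, one step as written is wrong: the claimed bound $d_2(a,u)\le\frac{\pi}{2}\,d_\infty(a,u)\le\pi r$ on $C$. Proposition \ref{basicof}(3) and Theorem \ref{geodesicasp}(3) give the comparison in the opposite direction, namely $d_\infty(u,v)\le\frac{\pi}{2}\|u-v\|_\infty\le\frac{\pi}{2}\|u-v\|_2\le\frac{\pi}{2}\,d_2(u,v)$; in $U_2$ the ratio $d_2/d_\infty$ is unbounded (as the paper stresses, $d_\infty/d_2$ can be arbitrarily small), so a $d_\infty$-ball is not $d_2$-bounded and your uniform bound $f_A\le(\pi r)^2$ on $C$ is false in general. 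This is exactly why the theorem carries the hypothesis that $A$ is bounded in $(M,d_2)$ --- a hypothesis your argument never uses and which your (false) inequality would render redundant. The fix is immediate: $d_2$-boundedness of $A$ gives, for each $u\in C$, $\sup_{a\in A}d_2(u,a)\le d_2(u,u_0)+\sup_{a\in A}d_2(u_0,a)<\infty$, so $f_A$ is finite (which is all that is really needed for the supremum of $\lambda$-convex functions to be $\lambda$-convex, as in the paper's use of Lemma \ref{supremum}).

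Two smaller points. First, your justification of continuity of $f_A$ (``a $\lambda$-convex function on a geodesic space is locally Lipschitz'') is not a valid general principle in infinite dimensions; but you do not need it: $f_A$ is a supremum of continuous functions, hence lower semicontinuous, which suffices to conclude $f_A(u^\ast)=m$ at the limit of the minimizing sequence (alternatively, $\sqrt{f_A}$ is a supremum of $1$-Lipschitz functions, hence $1$-Lipschitz, as the paper remarks). Second, your concern about identifying $d_2$-geodesics with the curves $\gamma_{u,v}$ of Definition \ref{defgeod} is handled exactly as you suggest, via Theorem \ref{geodesicasp}(1) and the fact that $C$ is geodesic for these curves; this is also how the paper proceeds, since the midpoint used in the convexity inequality is $\gamma_{u,v}(1/2)\in C$. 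With the boundedness step repaired as above, your proof is correct and essentially the paper's.
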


\begin{proof}
The condition $\radius_M(A)<r$ implies that there is an $m\in M$ with $A\subseteq B_\infty[m,r]$, so the subset $C$ is not empty. By Proposition \ref{ball2cg} the set $\bigcap_{a\in A}B_{U_2,\infty}[a,r]$ is the intersection of closed and geodesic spaces, so it is closed and geodesic. The set $M$ is also closed and geodesic, hence $C$ is closed and geodesic. 

Since $A$ is bounded the function $f_A$ is well defined. By Theorem \ref{strong} for each $a\in A$ the function $u\mapsto d_2(u,a)^2$ is $\frac{\sin(2r)}{2r}$-convex on $B_{U_2,\infty}[a,r]$, hence also $\frac{\sin(2r)}{2r}$-convex on $C$. The function $f_A$ is a supremum of $\lambda$-convex functions with 
$$\lambda=\frac{\sin(2r)}{2r}>0,$$
so Lemma \ref{supremum} implies that it is $\lambda$-convex. Define
$$r_{\inf}=\inf_{u\in C}f_A(u).$$
Let $r_n=r_{\inf}+1/n$ for $n\in\N$, and set 
$$C_{r_n}=f_A^{-1}((-\infty,r_n^2])=C\cap(\bigcap_{a\in A}B_2[a,r_n]).$$ 
These sets are not empty, closed in $(C,d_2)$ and descending in $n$, that is $C_{r_n}\subseteq C_{r_m}$ if $n\geq m$. We now show that their $d_2$-diameter tends to zero, if $u,v\in C_n$ let $w=\gamma_{u,v}(\frac{1}{2})$ be the midpoint of $u$ and $v$. This midpoint is in $C$ since $C$ is geodesic. From Definition \ref{deflamconv2} it follows that 
$$f_A(w)\leq \frac{f_A(u)+f_A(v)}{2}-\frac{\lambda}{2}d_2(u,v)^2.$$
From $f_A(u),f_A(v)\leq r_{\inf}+1/n$ and $f_A(w)\geq r_{\inf}$ a straightforward calculation shows that
$$d_2(u,v)\leq \sqrt{\frac{2}{n\lambda}}.$$
Since $(C,d_2)$ is complete we conclude that 
$$\bigcap_{n\in\N}C_{r_n}=\{u_m\}$$
for an $u_m\in C$, which is the unique minimizer of $f_A$.
\end{proof}

\begin{rem}
Note that $u\mapsto \sqrt{f_A(u)}=\sup_{a\in A}d_2(u,a)$ is the supremum of $1$-Lipschitz functions with respect to $d_2$, so it is $1$-Lipschitz.
\end{rem}

\begin{thm}\label{fixedpoint}
Let a group $G$ act on a closed geodesic subspace $M$ of $U_2$ such that the $d_2$ and $d_\infty$ metrics are invariant for the action. If there is a $v\in M$ such that its orbit $\oo(v)$ is bounded in the $d_2$ metric and such that $\radius_M(\oo(v))<\pi/2$, then  the action has a fixed point. 
\end{thm}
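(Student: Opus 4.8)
The plan is to reduce the statement to Theorem~\ref{thmcirc2} applied to the orbit $A=\oo(v)$, and then to extract the fixed point from the \emph{uniqueness} of the minimizer together with the $G$-equivariance of the whole construction. Since $A=\oo(v)$ is bounded in $d_2$ and $\radius_M(A)<\pi/2$, I would first fix once and for all a real number $r$ with $\radius_M(A)<r<\pi/2$ and set
$$C=M\cap\Big(\bigcap_{a\in A}B_\infty[a,r]\Big),\qquad f_A(u)=\sup_{a\in A}d_2(u,a)^2 .$$
By Theorem~\ref{thmcirc2} the set $C$ is nonempty, closed and geodesic in $(U_2,d_2)$, and $f_A\colon C\to\R$ has a unique minimizer, which I call $u_0$.

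The second step is to verify that the $G$-action leaves both $C$ and $f_A$ unchanged. Because $A$ is a single $G$-orbit, every $g\in G$ restricts to a bijection $a\mapsto g\cdot a$ of $A$ onto itself, and by hypothesis $g$ acts as a surjective isometry of $(M,d_\infty)$. Hence for $u\in M$ we have $u\in C$ if and only if $d_\infty(u,a)\le r$ for all $a\in A$, if and only if $d_\infty(g\cdot u,g\cdot a)\le r$ for all $a\in A$, if and only if $g\cdot u\in C$; so $g\cdot C=C$. Similarly, using that $g$ is a $d_2$-isometry and that $g^{-1}$ permutes $A$,
$$f_A(g\cdot u)=\sup_{a\in A}d_2(g\cdot u,a)^2=\sup_{a\in A}d_2(u,g^{-1}\cdot a)^2=f_A(u)\qquad(u\in C).$$

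Finally, for every $g\in G$ the point $g\cdot u_0$ lies in $C$ and satisfies $f_A(g\cdot u_0)=f_A(u_0)=\inf_{u\in C}f_A(u)$, so it is again a minimizer of $f_A$ on $C$; the uniqueness clause of Theorem~\ref{thmcirc2} then forces $g\cdot u_0=u_0$, and as $g$ was arbitrary $u_0$ is the desired fixed point. I do not expect a real obstacle here: all the analytic work — the strong convexity of $d_2(\cdot,a)^2$ on $d_\infty$-balls and the existence of a unique minimizer of the supremum functional — has already been packaged into Theorems~\ref{strong} and~\ref{thmcirc2}. The only points needing care are bookkeeping ones: $r$ must be fixed \emph{before} running the equivariance argument, since $C$ and $u_0$ depend on it; and the identity $g\cdot C=C$ uses precisely that the action is isometric for $d_\infty$, which is what makes the $d_\infty$-circumradius hypothesis — rather than a weaker $d_2$ one — the natural assumption.
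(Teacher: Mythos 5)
Your proposal is correct and follows essentially the same route as the paper: apply Theorem~\ref{thmcirc2} to $A=\oo(v)$ with a fixed $r\in(\radius_M(A),\pi/2)$, note that $C$ and $f_A$ are invariant because the action permutes the orbit and is isometric for $d_\infty$ and $d_2$ respectively, and conclude from the uniqueness of the minimizer that it is fixed by every $g\in G$. Your version merely spells out the equivariance bookkeeping slightly more explicitly than the paper does.
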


\begin{proof}
Set $A=\oo(v)$ and let $r\in\R$ which satisfies $\radius_M(\oo(v))<r<\pi/2$. We define as in  Theorem \ref{thmcirc2} the set $C$ and the function $f_A:C\to\R$. Since the action is isometric for the $d_\infty$ distance we see that 
$$\bigcap_{a\in \oo(v)}B_\infty[a,r]$$ 
is invariant for the action, so $C$ is also invariant. Since the action is invariant for the $d_2$ distance we see that the function $f_{\oo(v)}$ is invariant. By Theorem \ref{thmcirc2} the function $f_{\oo(v)}$ has a unique minimizer $u_m\in C$. for all $g\in G$ we have $f_{\oo(v)}(u_m)=f_{\oo(v)}(g\cdot u_m)$, therefore $g\cdot u_m=u_m$ for all $g\in G$. Hence $u_m$ is a fixed point of the action.
\end{proof}

We show that the $\pi/2$ bound in this theorem cannot be improved. 

\begin{ex}
Take $M=U(\C)\simeq S^1$ and $A=\{1,-1\}$. This is the orbit of $1$ by left multiplication with the group $A$.  Then $\oo(1)=A\subseteq B_\infty[i,\pi/2]$ but the action has no fixed point. 
To get an example in the infinite dimensional group $U_2$ consider the action of $A$ on $U_2$ given by $a\cdot u=\diag(a,\id)u$, where the diagonal is defined with respect to a decomposition $\Hi=\C\xi\oplus(\C\xi)^{\perp}$.
\end{ex}

We can get analogous results in the case of the unitary group of a finite von Neumann algebra $\M$.

\begin{defn}\label{deflamconv3}
Let $M\subseteq U_{\M}$ be a closed geodesic space. A function $f:M\to \R$ is $\lambda$-convex for a $\lambda >0$ if for any unit-speed geodesic $\gamma$ in $(M,d_2)$ the function
$t\mapsto f(\gamma(t))-\lambda t^2$ is convex.
\end{defn}

\begin{thm}\label{thmcircvn}
Let $M$ be a closed geodesic subspace of $U_{\M}$ and let $A\subseteq (M,d_2)$ be a subset such that $\radius_M(A)<\pi/2$. For $r\in\R$ such that $\radius_M(A)<r<\pi/2$ and 
$$C=M\cap\left(\bigcap_{a\in A}B_\infty[a,r]\right)$$
the function $f_A:C\to \R$ given by 
$$f_A(u)=\sup_{a\in A}d(u,a)^2$$
has a unique minimizer.
\end{thm}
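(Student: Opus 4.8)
The plan is to mimic the proof of Theorem \ref{thmcirc2} almost verbatim, replacing each ingredient about $U_2$ with its von Neumann algebra analogue established earlier in the paper. First I would reduce to showing that the set $C$ is a non-empty, closed, and geodesic subspace of $(U_{\M}, d_2)$, and that $f_A$ is a bounded-below $\lambda$-convex function on $C$ for $\lambda = \sin(2r)/(2r) > 0$. Non-emptiness of $C$ follows from $\radius_M(A) < r$, exactly as before. For the closed-and-geodesic property, I would invoke Proposition \ref{ballvncg}: each $B_{U_{\M},\infty}[a,r]$ is closed and geodesic in $(U_{\M}, d_2)$, so the intersection $\bigcap_{a\in A} B_{U_{\M},\infty}[a,r]$ is closed and geodesic, and intersecting with the closed geodesic subspace $M$ keeps these properties.

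Next I would establish the $\lambda$-convexity of $f_A$. By Theorem \ref{strongvn}, for each $a\in A$ the function $u\mapsto d_2(u,a)^2$ restricted to $B_\infty[a,r]$ is $\lambda$-convex with $\lambda = \sin(2r)/(2r)$ (here one uses that a geodesic in $C$ through points at $d_\infty$-distance $\le r$ from $a$ stays within $B_\infty[a,r]$, which is where the choice of domain matters). Hence each $u\mapsto d_2(u,a)^2$ is $\lambda$-convex on $C$. Boundedness of $A$ in $d_2$ gives a uniform upper bound on the functions $d_2(\cdot,a)^2$ over $C$ — using that $\mathrm{diam}(U_{\M}) = \pi$ from Theorem \ref{basicvn}(3), or directly the boundedness hypothesis — so an analogue of Lemma \ref{supremum} (which would need to be restated for $U_{\M}$, or one simply observes the identical one-line argument: the supremum of the convex functions $f_i(\gamma(t)) - \lambda t^2$ is convex) shows $f_A$ is $\lambda$-convex on $C$.

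Finally I would run the standard minimizer-extraction argument from Proposition 9.2.20 of \cite{metric}, identical to the last paragraph of the proof of Theorem \ref{thmcirc2}. Set $r_{\inf} = \inf_{u\in C} f_A(u)$, let $r_n = r_{\inf} + 1/n$, and consider the nested non-empty closed sublevel sets $C_{r_n} = f_A^{-1}((-\infty, r_n^2])$. For $u,v\in C_{r_n}$ with midpoint $w = \gamma_{u,v}(1/2)\in C$, the $\lambda$-convexity inequality
$$f_A(w) \leq \frac{f_A(u)+f_A(v)}{2} - \frac{\lambda}{2} d_2(u,v)^2$$
combined with $f_A(u), f_A(v) \leq r_{\inf} + 1/n$ and $f_A(w) \geq r_{\inf}$ forces $d_2(u,v) \leq \sqrt{2/(n\lambda)}$, so the $d_2$-diameters of the $C_{r_n}$ tend to zero. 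Completeness of $(C,d_2)$ — inherited from completeness of $(U_{\M}, d_2)$ in Theorem \ref{basicvn}(3) and closedness of $C$ — then yields a unique point in $\bigcap_n C_{r_n}$, which is the unique minimizer of $f_A$.

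The proof presents no real obstacle beyond bookkeeping: every tool has a von Neumann algebra counterpart already in the paper. The one point requiring slight care is confirming that the $\lambda$-convexity in Theorem \ref{strongvn}, which is stated for geodesics lying in $B_\infty[u,r]$, transfers correctly to geodesics in $C$ — but since $C \subseteq \bigcap_{a\in A} B_\infty[a,r]$ this is automatic. I would therefore simply write that the proof is verbatim that of Theorem \ref{thmcirc2}, substituting Proposition \ref{ballvncg} for Proposition \ref{ball2cg} and Theorem \ref{strongvn} for Theorem \ref{strong}.
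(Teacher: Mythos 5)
Your proposal is correct and follows essentially the same route as the paper, which likewise reduces the statement to the argument of Theorem \ref{thmcirc2}, substituting Proposition \ref{ballvncg} and Theorem \ref{strongvn} for their $U_2$ counterparts. The only nuance you already caught: since no boundedness of $A$ is assumed here, one appeals to the finite diameter of $(U_{\M},d_2)$ (Theorem \ref{basicvn}) to ensure $f_A$ is well defined, exactly as the paper does.
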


\begin{proof}
The proof is analogous to the proof of Theorem \ref{thmcirc2}. All subsets of $U_{\M}$ have finite diameter by Theorem \ref{basicof} (3) so $f_A$ is well defined. Proposition \ref{ballvncg} implies that $C$ is closed in the $d_2$ metric. 
\end{proof}

\begin{thm}\label{fixedvn}
Let a group $G$ act on a closed geodesic subspace $M$ of $U_{\M}$ such that the $d_2$ and $d_\infty$ metrics are invariant for the action. If there is a $v\in M$ such that its orbit satisfies $\radius_M(\oo(v))<\pi/2$, then  the action has a fixed point. 
\end{thm}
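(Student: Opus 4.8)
The plan is to transcribe the proof of Theorem \ref{fixedpoint} almost verbatim, with the only change that the circumcenter input is now Theorem \ref{thmcircvn} instead of Theorem \ref{thmcirc2}. Set $A=\oo(v)$. Since $G$ acts on $M$ we have $A\subseteq M$, and by Theorem \ref{basicvn}(3) the diameter of $U_{\M}$ is $\pi$, so $A$ is automatically bounded in $d_2$ and, unlike in Theorem \ref{fixedpoint}, no separate boundedness hypothesis is needed. Choose $r\in\R$ with $\radius_M(\oo(v))<r<\pi/2$ and, exactly as in Theorem \ref{thmcircvn}, put
$$C=M\cap\Big(\bigcap_{a\in A}B_\infty[a,r]\Big),\qquad f_A(u)=\sup_{a\in A}d_2(u,a)^2.$$
The inequality $\radius_M(\oo(v))<r$ guarantees $C\neq\emptyset$.

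Next I would record the two invariance facts. Since the $d_\infty$ metric is $G$-invariant, $g\cdot B_\infty[a,r]=B_\infty[g\cdot a,r]$ for every $g\in G$; as $g$ permutes the orbit $A$, the set $\bigcap_{a\in A}B_\infty[a,r]$ is $G$-invariant, and intersecting with the $G$-invariant set $M$ shows $C$ is $G$-invariant. Likewise, since the $d_2$ metric is $G$-invariant and $g$ permutes $A$, we get $f_A(g\cdot u)=\sup_{a\in A}d_2(g\cdot u,a)^2=\sup_{a\in A}d_2(u,g^{-1}\cdot a)^2=f_A(u)$ for all $g\in G$ and $u\in C$.

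Now apply Theorem \ref{thmcircvn}, whose hypothesis $\radius_M(A)<\pi/2$ is precisely what we have assumed: $f_A$ admits a unique minimizer $u_m\in C$. For any $g\in G$, the point $g\cdot u_m$ again lies in $C$ (invariance of $C$) and satisfies $f_A(g\cdot u_m)=f_A(u_m)=\min_{C}f_A$, so it is also a minimizer of $f_A$ on $C$; uniqueness forces $g\cdot u_m=u_m$. Hence $u_m$ is a fixed point of the action.

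I do not expect a genuine obstacle in this argument, since it is a direct transcription of the $U_2$ case; the only points deserving a line of comment are that boundedness of orbits is automatic in $U_{\M}$ by Theorem \ref{basicvn}(3) (so the boundedness clause of Theorem \ref{fixedpoint} is superfluous here), and that the circumradius condition $\radius_M(\oo(v))<\pi/2$ matches exactly the hypothesis of Theorem \ref{thmcircvn}. An example analogous to the one following Theorem \ref{fixedpoint} (embedding $U(\C)\simeq S^1$ acting by left multiplication into $U_{\M}$ via matrix units, with orbit $\{1,-1\}\subseteq B_\infty[i,\pi/2]$ and no fixed point) shows the bound $\pi/2$ is optimal.
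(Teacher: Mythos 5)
Your proposal is correct and matches the paper's intended argument exactly: the paper proves this theorem by repeating the proof of Theorem \ref{fixedpoint} with Theorem \ref{thmcircvn} in place of Theorem \ref{thmcirc2}, noting (as you do) that boundedness of the orbit is automatic since $(U_{\M},d_2)$ has finite diameter by Theorem \ref{basicvn}(3). Nothing further is needed.
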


\begin{proof}
The proof is analogous to the proof of Theorem \ref{thmcirc2}. We note again that $(U_{\M},d_2)$ has finite diameter.
\end{proof}

\begin{rem}
In a finite von Neumann algebra consider a descending family of projections $(p_s)_{s\in [0,1]}$ such that $\tau(p_s)=s$. For each $s$ consider the set of unitaries given by $A_s=\{\id,-p_s+(\id-p_s)\}$. Then 
$$A_s\subseteq B_2\left[ip_s+(\id-p_s),\frac{\pi s}{2}\right]\mbox{   and   } A_s\subseteq B_\infty\left[ip_s+(\id-p_s),\frac{\pi}{2}\right],$$ 
so $A_s$ has arbitrarily small diameter in the $d_2$ metric, and diameter less than $\pi/2$ with respect to the $d_\infty$ distance. It is the orbit of $1$ by left multiplication with the group $A_s$. This action has no fixed points, hence the radius bound in the theorem is optimal. Also observe that a condition with respect to the circumradius in the $d_2$ distance is not enough to guarantee the existence of a fixed point.   
\end{rem}

We can also obtain a similar result when the length parameter is less than $\pi$, such as the special unitary group $SU(\C^n)$ considered in Proposition \ref{specialunit}. We omit the proof of the following proposition.

\begin{prop}\label{fixedsu}
Let a group $G$ act on $SU(\C^n)$ such that the $d_2$ and $d_\infty$ metrics are invariant. If $\radius_{SU(\C^n)}(\oo(v))< \min\{\frac{2\pi}{n},\pi\}/2$ for a $v\in SU(\C^n)$, then  the action has a fixed point. 
\end{prop}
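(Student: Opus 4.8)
The plan is to mirror the proof of Theorem \ref{fixedvn}, which in turn mirrors Theorem \ref{thmcirc2} (and its von Neumann counterpart Theorem \ref{thmcircvn}), replacing the universal length parameter $\pi$ by the length parameter $l=\min\{\frac{2\pi}{n},\pi\}$ of the geodesic subspace $SU(\C^n)\subseteq U(\C^n)$ established in Proposition \ref{specialunit}. First I would record that $SU(\C^n)$, with the restrictions of $d_\infty$ and $d_2$, is a complete (indeed compact) metric space, that geodesics between points at $d_\infty$-distance less than $l$ stay in $SU(\C^n)$, and that the convexity results of Theorem \ref{convfull} and the strong convexity of Theorem \ref{strong} apply verbatim to geodesics whose endpoints lie in a common ball $B_\infty[a,r]$ with $r<l/2\le \pi/2$ — since $l/2\le\pi/2$, all the hypotheses ``$r<\pi/2$'' are automatically met, and the only extra point to check is that the relevant geodesics never leave $SU(\C^n)$, which holds because any two points of a set $A$ with $d_\infty$-circumradius $<l/2$ are at mutual $d_\infty$-distance $<l$.

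Next I would set $A=\oo(v)$, pick $r$ with $\radius_{SU(\C^n)}(\oo(v))<r<l/2$, and define
$$C=SU(\C^n)\cap\Big(\bigcap_{a\in A}B_\infty[a,r]\Big).$$
Exactly as in Theorem \ref{thmcirc2}, $C$ is nonempty (by the circumradius condition), closed, and geodesically convex in $(SU(\C^n),d_2)$: it is an intersection of $SU(\C^n)$ with closed $d_\infty$-balls of radius $r<\pi/2$, and closedness/geodesic convexity of such balls inside a geodesic subspace follows from Theorem \ref{convfull} together with the comparison $\frac{2}{\pi}d_\infty\le\|\cdot-\cdot\|_\infty\le\|\cdot-\cdot\|_2\le d_2$. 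Then $f_A(u)=\sup_{a\in A}d_2(u,a)^2$ is a supremum of $\lambda$-convex functions with $\lambda=\frac{\sin(2r)}{2r}>0$ (Theorem \ref{strong}, applied on each $B_\infty[a,r]\cap SU(\C^n)\subseteq C$), hence $\lambda$-convex by the analogue of Lemma \ref{supremum}; the boundedness needed for the supremum is free since $SU(\C^n)$ has finite $d_2$-diameter. The nested-sublevel-set argument of Theorem \ref{thmcirc2} then produces a unique minimizer $u_m\in C$, using completeness of $(C,d_2)$.

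Finally, since the $G$-action is isometric for $d_\infty$ the set $\bigcap_{a\in\oo(v)}B_\infty[a,r]$ is $G$-invariant, and since it is isometric for $d_2$ the function $f_{\oo(v)}$ is $G$-invariant; $SU(\C^n)$ is invariant by hypothesis, so $C$ is invariant, and uniqueness of the minimizer forces $g\cdot u_m=u_m$ for all $g\in G$, giving the fixed point.

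I do not expect any genuine obstacle: the argument is a routine transcription of the proofs already given, the only substantive observation being that the length parameter $l=\min\{\frac{2\pi}{n},\pi\}$ satisfies $l/2\le\pi/2$, so requiring $\radius_{SU(\C^n)}(\oo(v))<l/2$ is strong enough to place every geodesic used inside $SU(\C^n)$ (via Proposition \ref{specialunit}) while still falling within the $<\pi/2$ regime where Theorems \ref{convfull} and \ref{strong} give convexity and strong convexity. The one place to be slightly careful is to verify that $C$ really is geodesically convex as a subset of $SU(\C^n)$: one needs that for $u,v\in C$ the $U(\C^n)$-geodesic $\gamma_{u,v}$ both stays in $SU(\C^n)$ (because $d_\infty(u,v)<l$) and stays in each $B_\infty[a,r]$ (by Theorem \ref{convfull}), and these two facts together give $\gamma_{u,v}\subseteq C$.
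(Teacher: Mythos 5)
Your proposal is correct and follows exactly the route the paper intends: the paper omits this proof, noting only that it is analogous to Theorems \ref{thmcirc2}, \ref{fixedpoint} and \ref{fixedvn}, and your transcription supplies precisely the missing details. In particular you correctly isolate the one genuinely new point, namely that for $u,v\in C$ one has $d_\infty(u,v)\leq 2r<\min\{\tfrac{2\pi}{n},\pi\}$, so by Proposition \ref{specialunit} the geodesic $\gamma_{u,v}$ stays in $SU(\C^n)$ while Theorem \ref{convfull} keeps it in each ball $B_\infty[a,r]$, and since $r<\pi/2$ the strong convexity of Theorem \ref{strong} applies verbatim.
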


\begin{rem}
On the sets $B_\infty[u,r]$ with $u\in U_2$ and $r<\pi/4$ the center of mass can be defined, see \cite{naor} and the references therein. For a probability measure $\mu$ on $B_\infty[u,r]$ we define
$$u\mapsto \int_C d_2(u,v)^2 d\mu (v).$$
If the functions $u\mapsto d_2(u,v)^2$ are $\lambda$-convex then the integral above is a $\lambda$-convex function and its unique minimizer is called the center of mas.
\end{rem}

\subsection{Existence of equivalences of representations and invariant subspaces}\label{srigid}

Rigidity problems ask under what conditions on two group homomorphisms $\phi,\rho:H\to G$ there is a $g\in G$ such that $\phi(h)=g\rho(h)g^{-1}$. Local rigidity results assert that if $\phi$ and $\rho$ are close in some sense, so that $\{\phi(h)\rho(h)^{-1}:h\in H\}$ is small set, then a $g$ giving an equivalence between the two representations exists. Analogous statements for invariant subspaces of representations are also considered. In this section we establish optimal bounds for rigidity in terms of the circumradius in the $d_\infty$ distance. We start with the case of the Hilbert-Schmidt group $U_2$.

\begin{thm}\label{rigrep}
Let $\phi,\rho:H\to G$ be two homomorphisms into a closed geodesic subgroup $G \subseteq U_2$. If there is an $u\in G$ such that 
$$\radius_G(\{\phi(h)u\rho(h)^{-1}:h\in H\})<\pi/2$$ 
then there is a $g\in G$ such that 
$$\phi(h)=g\rho(h)g^{-1}$$
for all $h\in H$.
\end{thm}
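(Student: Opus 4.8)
The plan is to realize the desired $g$ as a fixed point of the conjugation-type action considered earlier, and then invoke Theorem \ref{fixedpoint}. Concretely, let $H$ act on $G$ by $\pi(h)(w) = \phi(h)\,w\,\rho(h)^{-1}$, which is exactly the action appearing in Proposition \ref{fixgeodesic}. First I would check that this action preserves both relevant metrics on $G$: since $d_\infty$ and $d_2$ are bi-invariant (left and right translation are isometries), the map $w \mapsto \phi(h) w \rho(h)^{-1}$ is an isometry for each of them; the ambient subgroup $G$ is closed and geodesic in $(U_2,d_2)$ by hypothesis. Thus the hypotheses of Theorem \ref{fixedpoint} are in place once we exhibit an orbit of the right size.

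Next I would identify the orbit of the given element $u$: one computes $\pi(h)(u) = \phi(h) u \rho(h)^{-1}$, so the orbit $\oo(u)$ is precisely the set $\{\phi(h) u \rho(h)^{-1} : h \in H\}$ appearing in the hypothesis. The assumption $\radius_G(\{\phi(h)u\rho(h)^{-1}:h\in H\}) < \pi/2$ says exactly that $\radius_G(\oo(u)) < \pi/2$, and the orbit is automatically bounded in $d_2$ (it sits inside a $d_\infty$-ball of radius $<\pi/2$, and by Proposition \ref{basicof}(3) combined with Theorem \ref{geodesicasp}(3) the $d_\infty$-ball has bounded $d_2$-diameter; alternatively, a finite $d_\infty$-circumradius gives a finite $d_\infty$-diameter, hence a finite $\|\cdot\|_\infty$-diameter, hence a finite $d_2$-diameter via $\|x-y\|_\infty \le \|x-y\|_2 \le d_2(x,y)$ inside $U_2$). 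So Theorem \ref{fixedpoint} applies and yields a fixed point $g \in G$ of the action $\pi$.

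Finally I would unwind what it means for $g$ to be fixed: $\pi(h)(g) = g$ for all $h \in H$ reads $\phi(h) g \rho(h)^{-1} = g$, i.e. $\phi(h) = g \rho(h) g^{-1}$ for all $h\in H$, which is the claimed equivalence of representations. The main obstacle here is not any single step but making sure the boundedness hypothesis of Theorem \ref{fixedpoint} is genuinely met rather than assumed — the $d_\infty$-circumradius bound is the natural input, and one must translate it into $d_2$-boundedness of the orbit; this is exactly where the comparison inequalities between $d_\infty$, $\|\cdot\|_\infty$, $\|\cdot\|_2$ and $d_2$ on $U_2$ from Proposition \ref{basicof} and Theorem \ref{geodesicasp} are used. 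Everything else is a matter of checking that the abstract fixed-point machinery already developed applies verbatim.
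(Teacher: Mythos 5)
Your overall strategy is exactly the paper's: define the action $\pi(h)(w)=\phi(h)\,w\,\rho(h)^{-1}$ on $G$, note it is isometric for $d_\infty$ and $d_2$ by bi-invariance of both metrics, observe that $\oo(u)$ is precisely the set in the hypothesis, apply Theorem \ref{fixedpoint}, and unwind the fixed-point identity to get $\phi(h)=g\rho(h)g^{-1}$. The one place where your argument contains a genuine error is the claim that the $d_2$-boundedness hypothesis of Theorem \ref{fixedpoint} is ``automatic''. Both of your justifications run the comparison inequalities in the wrong direction: Proposition \ref{basicof}(3) and Theorem \ref{geodesicasp}(3) give $\frac{2}{\pi}d_\infty(u,v)\le\|u-v\|_\infty\le\|u-v\|_2\le d_2(u,v)$ together with $d_2(u,v)\le C\|u-v\|_2$, i.e.\ they bound $d_\infty$ above by $d_2$, never $d_2$ above by $d_\infty$. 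In fact a $d_\infty$-ball in $U_2$ has infinite $d_2$-diameter: if $x_n$ is skew-adjoint of rank $n$ with all eigenvalues $\pm ir$, then $e^{x_n}\in B_\infty[\id,r]\cap U_2$ while $d_2(\id,e^{x_n})\ge\|\id-e^{x_n}\|_2=2\sin(r/2)\sqrt{n}\to\infty$; this is exactly the phenomenon the introduction alludes to when it says the ratio $d_\infty/d_2$ can be arbitrarily small in the Hilbert--Schmidt setting. So the circumradius hypothesis alone does not deliver $d_2$-boundedness of the orbit.

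To be fair, you put your finger on a point the paper itself glosses over: its proof of Theorem \ref{rigrep} invokes Theorem \ref{fixedpoint} without verifying $d_2$-boundedness of $\oo(u)$ either, and boundedness is genuinely needed there, since otherwise $f_A(u)=\sup_{a\in A}d_2(u,a)^2$ may fail to be finite anywhere on $C$. The correct repair is not the inequality chain you propose, but either to carry $d_2$-boundedness of $\{\phi(h)u\rho(h)^{-1}:h\in H\}$ as an explicit hypothesis (as Theorem \ref{fixedpoint} does), or to verify it in the concrete situation at hand, as the paper does for the Grassmannian case in Theorem \ref{riggrass} via Lemma \ref{gbound}. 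Apart from this point, your write-up reproduces the paper's proof.
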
 

\begin{proof}
The action of $H$ on $G$ given by
$$h\mapsto (g\mapsto \phi(h)g\rho(h)^{-1})$$
is isometric in the $d_{\infty}$ and $d_2$ metrics by translation invariance. We have $\oo(u)=\{\phi(h)u\rho(h)^{-1}:h\in H\}$ and $\radius_G(\oo(u))<\pi/2$. By Theorem \ref{fixedpoint} the action has a fixed point $g\in G$, that is, $\phi(h)g\rho(h)^{-1}=g$ for all $h\in H$. Hence $\phi(h)=g\rho(h)g^{-1}$ for all $h\in H$. 
\end{proof}

We next show that the $\pi/2$ bound on the circumradius is optimal.

\begin{ex}
Let $H$ be the two element group and $G=U_1\simeq S^1$ the unitary group on one dimensional space. If $\phi$ is the trivial representation and $\rho$ is the non trivial representation, then $\{\phi(h)\rho(h)^{-1}:h\in H\}=\{-1,1\}\subseteq S^1$. Note that $\radius_G(\{-1,1\})=\pi/2$. The group $G$ can be embedded in $U_2$.
\end{ex}

\begin{rem}
In \cite{svarc} the authors constructed a family of representations $\rho_t:G\to U$ with $t\in\R$ such that 
$$\sup_{g\in G}\|\rho_{t_1}(g)-\rho_{t_2}(g)\|_\infty =\sup_{g\in G}\|\rho_{t_2}(g)^{-1}\rho_{t_1}(g)-\id\|_\infty\to 0$$
as $t_1,t_2\to t_0\in \R$, and such that $\rho_{t_1}$ is not equivalent to $\rho_{t_2}$ if $t_1\neq t_2$. Hence, analogous fixed point results are not possible in the context of the full unitary group. Otherwise we could get a fixed point of the action
$$g\mapsto (u\mapsto \rho_{t}(g)u\rho_{0}(g))$$
which makes the representations equivalent. In particular, there is no bi-invariant distance on $U$ which is equivalent to the $d_\infty$ distance and which is uniformly convex on sufficiently small balls.
\end{rem}

\begin{rem}
Another approach is based on the construction of the Riemannian center of mass or Frechet-Karcher mean on compact Lie groups. The study of these means began in \cites{karcher,karcher3} and was applied to some rigidity problems, that is, metric conditions for the equivalence of representations. These centers of mass are generalizations of the Frechet center of mass and are defined for probability measures. Local rigidity results were obtained when $G$ is a group of unitaries in Proposition 4.4 1) and Lemma 2.6 of \cite{ulam} using operator algebra techniques.
\end{rem}

On Grassmannians we can obtain a similar fixed point theorem. 

\begin{lem}\label{gbound}
The Grassmannian $e(\Gr_m)$ is bounded in $(U_2,d_2)$.
\end{lem}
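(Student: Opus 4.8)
The plan is to show that $e(\Gr_m)$ has finite $d_2$-diameter. The crucial tool is the two-sided comparison between the rectifiable distance and the $2$-norm from Theorem \ref{geodesicasp} (3), namely $\left(\sqrt{1-\pi^2/12}\right)d_2(u,v)\le \|u-v\|_2$ for $u,v\in U_2$; note $1-\pi^2/12>0$. This reduces the problem to bounding $\|u-v\|_2$ for $u,v\in e(\Gr_m)$, which is a short trace computation.

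First I would write any element of $e(\Gr_m)$ as $u=e_p=\id-2p$ with $p$ a projection of rank $m$; since such a $p$ is finite rank it lies in $\B_2(\Hi)$ and $\Tr(p)=m$, which also explains why $e(\Gr_m)\subseteq U_2$. Given $u=e_p$ and $v=e_q$ in $e(\Gr_m)$, I compute
$$\|u-v\|_2=\|e_p-e_q\|_2=2\|p-q\|_2,$$
and, since $p-q$ is self-adjoint,
$$\|p-q\|_2^2=\Tr\big((p-q)^2\big)=\Tr(p)+\Tr(q)-2\Tr(pq)=2m-2\Tr(pqp)\le 2m,$$
using $\Tr(pqp)\ge 0$ because $pqp\ge 0$. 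Hence $\|u-v\|_2\le 2\sqrt{2m}$.

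Combining this estimate with Theorem \ref{geodesicasp} (3), for all $u,v\in e(\Gr_m)$ we get
$$d_2(u,v)\le \frac{\|u-v\|_2}{\sqrt{1-\pi^2/12}}\le \frac{2\sqrt{2m}}{\sqrt{1-\pi^2/12}},$$
so $\diam_{d_2}\big(e(\Gr_m)\big)\le 2\sqrt{2m}\,(1-\pi^2/12)^{-1/2}<\infty$, which is the assertion. There is essentially no obstacle: the only point requiring care is that the comparison inequality of Theorem \ref{geodesicasp} (3) applies because both endpoints lie in $U_2$, which holds as finite-rank projections belong to $\B_2(\Hi)$; everything else is elementary. (Alternatively one could connect $e_p$ to $e_q$ by conjugating $e_p$ along a one-parameter unitary group supported on $\operatorname{ran}p+\operatorname{ran}q$ and bound the $\Le_2$-length of that path directly, obtaining a bound of the same order, but the norm-comparison route above is shorter.)
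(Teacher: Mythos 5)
Your proof is correct, but it follows a different route from the paper. The paper argues geometrically: since $e_p$ and $e_q$ are the identity on $\Hi_1^\perp$ and $\Hi_2^\perp$ respectively, both are symmetries of the finite dimensional space $\Hi_1+\Hi_2$ of dimension at most $2m$, and the $d_2$-diameter of the unitary group of that space is finite (uniformly in the pair, since the dimension bound $2m$ is uniform); this is essentially the alternative you sketch in your final parenthesis. You instead bound the ambient $2$-norm directly, via $\|e_p-e_q\|_2=2\|p-q\|_2$ and the trace identity $\|p-q\|_2^2=2m-2\Tr(pqp)\le 2m$, and then transfer this to the rectifiable distance through the two-sided comparison of Theorem \ref{geodesicasp}(3), using that $1-\pi^2/12>0$. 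Both arguments are sound; yours has the advantage of producing an explicit quantitative bound $\diam_{d_2}(e(\Gr_m))\le 2\sqrt{2m}\,(1-\pi^2/12)^{-1/2}$ with no reference to finite-dimensional subgroups, while the paper's avoids the comparison constant altogether by working inside $U(\Hi_1+\Hi_2)$, where any two unitaries are joined by a geodesic $e^{tx}$ with $\|x\|_\infty\le\pi$, giving a length bound of order $\pi\sqrt{2m}$. You also correctly justify the applicability of Theorem \ref{geodesicasp}(3) by noting that finite rank projections are Hilbert--Schmidt, so $e(\Gr_m)\subseteq U_2$.
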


\begin{proof}
Let $p$ and $q$ be projections onto $n$ dimensional subspaces $\Hi_1$ and $\Hi_2$ of $\Hi$.  Since $e_p$ is the identity on $\Hi_1^\perp$ and $e_q$ is the identity on $\Hi_2^\perp$ it follows that both are symmetries in the finite dimensional space $\Hi_1+\Hi_2$ of dimension less than $2n$. In $U(\Hi_1+\Hi_2)$ the diameter in the $d_2$ metric is finite.
\end{proof}

\begin{lem}\label{iactionfull}
The full unitary group $U$ acts isometrically by conjugation on $(U_2,d_2)$.
\end{lem}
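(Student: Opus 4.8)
The plan is to fix $w\in U$ and prove that the conjugation map $c_w\colon u\mapsto wuw^{-1}$ is a well-defined isometry of $(U_2,d_2)$; the relations $c_{w_1}\circ c_{w_2}=c_{w_1w_2}$ and $c_{\id}=\ident$ then exhibit $w\mapsto c_w$ as an action of $U$ on $(U_2,d_2)$ by isometries.

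First I would check that $c_w$ maps $U_2$ into itself. If $u\in U_2$ then $wuw^{-1}-\id=w(u-\id)w^{-1}$, and conjugation by the unitary $w$ preserves the Hilbert-Schmidt ideal with $\|w(u-\id)w^{-1}\|_2=\|u-\id\|_2$, since for $a\in\B_2(\Hi)$ the trace property gives $\Tr\big((waw^{-1})^*(waw^{-1})\big)=\Tr(w a^*a w^{-1})=\Tr(a^*a)$. Hence $wuw^{-1}\in U_2$, and since $c_{w^{-1}}$ is a two-sided inverse, $c_w$ is a bijection of $U_2$.

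Next I would note that $c_w$ is the restriction to $U_2$ of the bounded real-linear operator $a\mapsto waw^{-1}$ on $\B_2(\Hi)_{ah}$, hence a smooth map of Banach-Lie groups; in particular, for a piecewise smooth curve $\alpha$ in $U_2$ the image $c_w\circ\alpha$ is piecewise smooth with $\tfrac{d}{dt}\big(w\alpha(t)w^{-1}\big)=w\dot\alpha(t)w^{-1}$. The Finsler norm of the velocity of $c_w\circ\alpha$ at time $t$ is $\|(w\alpha(t)w^{-1})^{-1}w\dot\alpha(t)w^{-1}\|_2=\|w\alpha(t)^{-1}\dot\alpha(t)w^{-1}\|_2$, and by the conjugation invariance of $\|\cdot\|_2$ this equals $\|\alpha(t)^{-1}\dot\alpha(t)\|_2$, the Finsler norm of the velocity of $\alpha$ at time $t$. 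Integrating over $t$ gives $\Le_2(c_w\circ\alpha)=\Le_2(\alpha)$.

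Finally, since $\alpha\mapsto c_w\circ\alpha$ is a length-preserving bijection from the piecewise smooth curves in $U_2$ joining $u$ and $v$ onto those joining $wuw^{-1}$ and $wvw^{-1}$, taking the infimum of lengths yields $d_2(wuw^{-1},wvw^{-1})=d_2(u,v)$, so $c_w$ is an isometry. The argument is elementary throughout; the only point deserving a remark is the smoothness of $c_w$ on the Banach-Lie group $U_2$, which holds because $c_w$ is the restriction of a bounded linear map on the model space $\B_2(\Hi)_{ah}$.
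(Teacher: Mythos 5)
Your proposal is correct and follows essentially the same route as the paper: the key point in both is that $\|wxw^{-1}\|_2=\|x\|_2$ for unitary $w$, so conjugation preserves the Finsler norm of velocities, hence the length functional $\Le_2$, hence the rectifiable distance $d_2$. The extra details you supply (that $c_w$ maps $U_2$ onto itself and carries smooth curves joining $u,v$ bijectively onto smooth curves joining the conjugated endpoints) are correct elaborations of the same argument.
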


\begin{proof}
The norm $\|\cdot\|_2$ is invariant by conjugation, that is, $\|x\|_2=\|uxu^{-1}\|_2$ for $x\in\B_2(\Hi)_{ah}$ and $u\in U$. The proposition follows since the length functional $\Le_2$ is defined in terms of $\|\cdot\|_2$. 
\end{proof}

\begin{thm}\label{riggrass}
Let $H\subseteq U$ be a subgroup which acts by conjugation on $e(\Gr_{m})$, that is
$$h\mapsto (e_p\mapsto he_ph^{-1}).$$
If there is a projection $q_1$ onto an $m$-dimensional subspace such that 
$$\radius_{e(\Gr_m)}(\oo(e_{q_1})<\pi/2,$$ 
then there is a projection $q$ onto an $m$-dimensional subspace such that $hq=qh$ for all $h\in H$. 
\end{thm}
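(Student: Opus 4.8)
The plan is to deduce the statement from the fixed point theorem for the Hilbert-Schmidt group, Theorem \ref{fixedpoint}, applied to the closed geodesic subspace $M = e(\Gr_m)$ of $U_2$. First I would record that $M = e(\Gr_m)$ is closed and geodesic in $(U_2,d_2)$ by Proposition \ref{grasshilb}, so that it is a legitimate target for Theorem \ref{fixedpoint}. Next I would check that the conjugation action $h\cdot e_p = he_ph^{-1}$ of $H$ on $M$ is well defined and isometric for both metrics. Well-definedness: for a projection $p$ onto an $m$-dimensional subspace, $he_ph^{-1} = \id - 2hph^{-1}$ is again a symmetry whose defect $hph^{-1}$ is a projection onto an $m$-dimensional subspace, and $he_ph^{-1} - \id$ has finite rank, so $he_ph^{-1}\in e(\Gr_m)$. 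Isometry for $d_2$ is exactly Lemma \ref{iactionfull}, since $H\subseteq U$ acts by conjugation and the $\|\cdot\|_2$-norm, hence $\Le_2$, is conjugation invariant; isometry for $d_\infty$ follows from the conjugation invariance of the operator norm, which is how $d_\infty$ was defined, so conjugation preserves $\Le_\infty$ and hence $d_\infty$ (and thus the balls $B_\infty[c,r]$).

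Then I would verify the two hypotheses of Theorem \ref{fixedpoint} for the point $v = e_{q_1}$. The orbit $\oo(e_{q_1})$ is contained in $e(\Gr_m)$, which is bounded in $(U_2,d_2)$ by Lemma \ref{gbound}, so $\oo(e_{q_1})$ is $d_2$-bounded; and $\radius_M(\oo(e_{q_1})) < \pi/2$ is the hypothesis of the present theorem. Theorem \ref{fixedpoint} therefore supplies a fixed point $e_q\in M = e(\Gr_m)$ of the conjugation action, i.e.\ $he_qh^{-1} = e_q$ for all $h\in H$.

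Finally I would translate this back in terms of projections: writing $e_q = \id - 2q$, the relation $he_qh^{-1} = e_q$ is equivalent to $hqh^{-1} = q$, that is $hq = qh$ for all $h\in H$, and since $e_q\in e(\Gr_m)$ the operator $q$ is a projection onto an $m$-dimensional subspace, which is the desired conclusion. I do not expect a genuine obstacle here: all the substance is already contained in Proposition \ref{grasshilb}, Lemmas \ref{gbound} and \ref{iactionfull}, and Theorem \ref{fixedpoint}; the only points that need care are the well-definedness and the $d_\infty$-isometry of the conjugation action on the Grassmannian, and correctly invoking the $d_2$-boundedness of $e(\Gr_m)$ so that Theorem \ref{fixedpoint} applies.
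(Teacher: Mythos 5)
Your proposal is correct and follows essentially the same route as the paper: apply Theorem \ref{fixedpoint} to $M=e(\Gr_m)$ (closed and geodesic by Proposition \ref{grasshilb}), using Lemma \ref{iactionfull} and conjugation-invariance of the operator norm for isometry, Lemma \ref{gbound} for boundedness of the orbit, and then translate the fixed symmetry $e_q=\id-2q$ into $hq=qh$. The extra care you take with well-definedness of the action and the $d_\infty$-isometry is a welcome but minor elaboration of the paper's argument.
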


\begin{proof}
The action is isometric for the $d_{\infty}$ and $d_2$ metrics by Lemma \ref{iactionfull}. The orbit of $q_1$ is bounded in the $d_2$ metric since the Grassmannian is bounded by Lemma \ref{gbound}. Therefore, with $M=\Gr_m$, Theorem \ref{fixedpoint} asserts that the action has fixed point $e_q$ in $e(\Gr_{m})$. Hence, the projection $q$ satisfies 
$$he_qh^{-1}=h(\id-2q)h^{-1}=\id-2q$$
for all $h\in H$, so the conclusion of the theorem follows.
\end{proof}

The $\pi/2$ bound on the circumradius is optimal.

\begin{ex}
Consider the Grassmannian $\Gr_{1}(\C^2)$ of one dimensional subspaces in $\C^2$ and take the projections $p$ and $q$ onto the second and first coordinates of $\C^2$. Hence

$e_p=
 \left(\begin{array}{cc}
1 & 0 \\
\\
0 & -1
\end{array}\right)
$
,\quad
$
e_q=
 \left(\begin{array}{cc}
-1 & 0 \\
\\
0 & 1
\end{array}\right)
$
\quad and we define \quad
$
x=
 \left(\begin{array}{cc}
0 & -1 \\
\\
1 & 0
\end{array}\right).
$

\noindent
Then  
$$\gamma(t)=e_pe^{tx}=e^{-\frac{1}{2} tx}e_pe^{\frac{1}{2}tx}$$ 
for $t\in [0,\pi]$ is a curve from $e_p$ to $e_q$. It has speed $\|x\|_\infty =1$ in the $d_\infty$ metric and total length $\pi$. If we consider $\gamma(\pi/2)\in e(\Gr_1(\C^2)$ then $d_\infty(\gamma(\pi/2),e_p)= \pi/2$ and $d_\infty(\gamma(\pi/2),e_q)= \pi/2$, therefore 
$$\{e_p,e_q\}\subseteq B_\infty[\gamma(\pi/2),\pi/2],$$
which implies $\radius_{e(\Gr_1(\C^2)}\leq \pi/2$. We define $H$ as the group consisting of maps 
$$(x_1,x_2)\mapsto (s_1x_{d(1)},s_2x_{d(2)})$$ 
for signs $s_1,s_2$ and permutation $d$ of $\{1,2\}$. The orbit of $e_p$ is $\{e_p,e_q\}$ and this group action has no fixed points. To get an example in the infinite dimensional context consider $\Hi'=\C^2\oplus\Hi$, $e'_p=\diag(e_p,\id)$ and $H'=\{\diag(h,u):h\in H,u\in U(\Hi)\}$.
\end{ex}

We can obtain similar results in the case of finite von Neumann algebras. We omit the proofs since they are similar.

\begin{thm}
Let $\phi,\rho:H\to G$ be two homomorphisms into a closed geodesic subgroup $G \subseteq U_{\M}$. If there is an $u\in G$ such that $\radius_{G}(\{\phi(h)u\rho(h)^{-1}:h\in H\})<\pi/2$ then there is a $g\in G$ such that 
$$\phi(h)=g\rho(h)g^{-1}$$
for all $h\in H$.
\end{thm}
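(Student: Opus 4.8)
The plan is to imitate the proof of Theorem \ref{rigrep} verbatim, replacing the Hilbert--Schmidt fixed point theorem by its finite von Neumann algebra counterpart, Theorem \ref{fixedvn}. First I would introduce the action of $H$ on $G$ defined by
$$h\mapsto (g\mapsto \phi(h)g\rho(h)^{-1}),$$
which is well defined because $G$ is a subgroup of $U_{\M}$ and $\phi,\rho$ take values in $G$. Since both the $d_\infty$ and $d_2$ metrics on $U_{\M}$ are invariant under left and right translations, this action is isometric for both metrics; because $G$ is a closed geodesic subspace, the restricted metric structure is the one required in Theorem \ref{fixedvn}.

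Next I would identify the orbit of $u$: by definition $\oo(u)=\{\phi(h)u\rho(h)^{-1}:h\in H\}$, and the hypothesis is precisely $\radius_G(\oo(u))<\pi/2$. Unlike the $U_2$ case, no separate boundedness check is needed, since every subset of $U_{\M}$ has finite $d_2$-diameter by Theorem \ref{basicvn} (3). Thus the hypotheses of Theorem \ref{fixedvn} are met, and the action admits a fixed point $g\in G$, i.e.\ $\phi(h)g\rho(h)^{-1}=g$ for all $h\in H$. Rearranging gives $\phi(h)=g\rho(h)g^{-1}$ for all $h\in H$, as desired.

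There is no real obstacle here: the argument is a direct transcription, and the only point that deserves a word of care is that the orbit lies in the closed geodesic subspace $G$, so that $\radius_G$ (the circumradius relative to $G$) rather than $\radius_{U_{\M}}$ is the relevant quantity, matching exactly the hypothesis of Theorem \ref{fixedvn}. Optionally I would remark, as after Theorem \ref{rigrep}, that the $\pi/2$ bound is optimal by the same one-dimensional example $H=\Z/2\Z$, $G\simeq S^1$ embedded in $U_{\M}$ via a projection of trace close to $0$, using the set $A_s$ from the earlier remark to witness optimality even under an additional $d_2$-circumradius hypothesis.
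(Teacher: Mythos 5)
Your argument is exactly the one the paper intends: it omits the proof as "similar" to Theorem \ref{rigrep}, namely applying the finite von Neumann algebra fixed point theorem (Theorem \ref{fixedvn}) to the action $g\mapsto \phi(h)g\rho(h)^{-1}$ on the closed geodesic subgroup $G$, with the orbit's $d_\infty$-circumradius hypothesis and the automatic $d_2$-boundedness from Theorem \ref{basicvn} (3). The proposal is correct and matches the paper's approach.
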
 

\begin{thm}
Let $H\subseteq U_{\M}$ be a subgroup which acts by conjugation on $e(\Gr_{\M,s})$, that is
$$h\mapsto (e_p\mapsto he_ph^{-1}).$$
If there is an $e_{q_1}\in e(\Gr_{\M,s})$ such that 
$$\radius_{e(\Gr_m)}(\oo(e_{q_1}))<\pi/2,$$ 
then there is a projection $q\in \M$ with  $\tau(q)=s$ such that $hq=qh$ for all $h\in H$. 
\end{thm}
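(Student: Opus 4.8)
The plan is to reduce this statement to the fixed point theorem for finite von Neumann algebras, Theorem \ref{fixedvn}, exactly as Theorem \ref{riggrass} was deduced from Theorem \ref{fixedpoint} in the Hilbert--Schmidt setting. First I would check that the conjugation action of $H$ on $M=e(\Gr_{\M,s})$ is well defined and isometric. If $h\in H\subseteq U_{\M}$ and $e_p\in e(\Gr_{\M,s})$, then $he_ph^{-1}=\id-2hph^{-1}$, and $hph^{-1}$ is again a projection in $\M$ with $\tau(hph^{-1})=\tau(p)=s$ since $\tau$ is a trace; hence the action maps $M$ into itself. The norms $\|\cdot\|_\infty$ and $\|\cdot\|_2$ on $\M_{ah}$ are invariant under conjugation by unitaries (for $\|\cdot\|_2$ because $\tau$ is a trace), so the associated length functionals, and therefore the rectifiable metrics $d_\infty$ and $d_2$ on $U_{\M}$, are conjugation invariant; thus the action is isometric for both metrics.

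Next I would invoke Proposition \ref{grassvn}, which asserts that $M=e(\Gr_{\M,s})$ is a closed geodesic subspace of $(U_{\M},d_2)$. The orbit $\oo(e_{q_1})$ is automatically bounded in the $d_2$ metric, since the diameter of $U_{\M}$ is finite by Theorem \ref{basicvn}(3); and by hypothesis $\radius_M(\oo(e_{q_1}))<\pi/2$. These are precisely the hypotheses of Theorem \ref{fixedvn}, so the conjugation action has a fixed point $e_q\in M$.

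Finally I would unwind what a fixed point means: from $he_qh^{-1}=e_q$ for all $h\in H$ we get $h(\id-2q)h^{-1}=\id-2q$, hence $hqh^{-1}=q$, that is $hq=qh$ for all $h\in H$; and $\tau(q)=s$ because $e_q\in e(\Gr_{\M,s})=\{u\in U_{\M}:u=u^*,\ \tau(u)=1-2s\}$. This is the desired projection. I do not expect a serious obstacle here: all the analytic content --- the strong convexity of $d_2(\cdot,\cdot)^2$ on $d_\infty$-balls (Theorem \ref{strongvn}), the existence of minimizers of $f_A$ on the set $C$ of approximate circumcenters (Theorem \ref{thmcircvn}), the fixed point theorem (Theorem \ref{fixedvn}), and the closedness and geodesic convexity of the Grassmannian (Proposition \ref{grassvn}) --- has already been established, so the only thing to verify in this proof is the routine compatibility of the conjugation action with the metric structure and the elementary translation between fixed points of conjugation and projections commuting with $H$.
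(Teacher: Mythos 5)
Your proof is correct and follows exactly the route the paper intends: the paper omits this proof, stating it is analogous to the Hilbert--Schmidt case (Theorem \ref{riggrass} via Theorem \ref{fixedpoint}), and your reduction via Proposition \ref{grassvn} and Theorem \ref{fixedvn}, with the routine checks that conjugation by $H\subseteq U_{\M}$ preserves $e(\Gr_{\M,s})$ and is isometric for $d_2$ and $d_\infty$, is precisely that argument spelled out.
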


\begin{rem}
The bounds in this context are also optimal. If we choose matrix units then we can define an embedding $M_2(\C)\subseteq \M$ and the examples follow from the examples in $U_{M_2(\C)}=U(\C^2)$. 
\end{rem}






\noindent
\end{document}